\title{Polarizations on a triangulated category}
\renewcommand\footnotemark{}
\thanks{2020 {\em Mathematics Subject Classification.} 14F08 (primary) 14E05, 18G80, 53D37 (secondary)}\thanks{{\em Key words and phrases.} Balmer spectrum, homological mirror symmetry, Matsui spectrum, noncommutative projective geometry, perfect derived category, triangulated category, tensor triangulated category}
\author{Daigo Ito} 
\date{}
\newcommand{\address}[1]{\gdef\@address{#1}}
\newcommand{\email}[1]{\gdef\@email{\url{#1}}}
\newcommand{\website}[1]{\gdef\@website{\url{#1}}}
\newcommand{\@endstuff}{\par\vspace{\baselineskip}\noindent\small
\begin{tabular}{@{}l}\scshape{Daigo Ito} \\ \scshape\@address\\\textrm{E-mail address:} \@email \\\textrm{Website:} \@website\end{tabular}}
\address{Department of Mathematics, University of California, Berkeley, Evans Hall 935, CA 94720-3840}
\email{daigoi@berkeley.edu}
\website{https://daigoi.github.io/}
\DeclareMathOperator{\aut}{Aut}
\DeclareMathOperator{\auteq}{Auteq}
\DeclareMathOperator{\End}{End}
\DeclareMathOperator{\FM}{FM}
\let \hom \relax
\DeclareMathOperator{\hom}{Hom}
\DeclareMathOperator{\ob}{Ob}
\DeclareMathOperator{\perf}{\mathsf{Perf}}
\DeclareMathOperator{\pic}{Pic}
\DeclareMathOperator{\proj}{Proj}
\DeclareMathOperator{\rank}{rank}
\DeclareMathOperator{\spc}{Spc}
\DeclareMathOperator{\spec}{Spec}
\let \sl \relax 
\DeclareMathOperator{\sl}{SL}
\DeclareMathOperator{\supp}{Supp}
\DeclareMathOperator{\Th}{Th}
\newcommand {\bb}{\mathbb}
\renewcommand {\cal}{\mathcal}
\newcommand{\ecal}{\mathscr}
\renewcommand {\epsilon}{\varepsilon}
\newcommand {\fr}{\mathfrak}
\newcommand {\bra}[1]{\langle{#1}\rangle}
\renewcommand {\l}{\left}
\renewcommand {\r}{\right}
\newcommand*{\DashedArrow}[1][]{\mathbin{\tikz [baseline=-0.25ex,-latex, dashed,#1] \draw [#1] (0pt,0.5ex) -- (1.3em,0.5ex);}}
\newcommand {\ratmap}{\DashedArrow[->,densely dashed    ]} 
\newcommand {\emp}{\emptyset}
\newcommand {\inj}{\hookrightarrow}
\newcommand {\inv}{^{-1}}
\newcommand {\iso}{\cong}
\newcommand {\surj}{\twoheadrightarrow}
\newcommand {\tens}{\otimes}
\newsavebox{\pullbacks}
\sbox\pullbacks{%
\begin{tikzpicture}%
\draw (0,0) -- (1ex,0ex);%
\draw (1ex,0ex) -- (1ex,1ex);%
\end{tikzpicture}}
\newcommand{\coh}{\mathsf{coh}}
\newcommand{\D}{\mathsf{D}}
\newcommand{\h}{{\mathrm{H}}}
\newcommand{\ho}{\mathsf{Ho}}
\newcommand {\id}{{\rm id}}
\newcommand {\op}{\mathsf{op}}
\newcommand{\ser}{\mathsf{Ser}}
\let \sf \relax 
\newcommand{\sf}{\mathsf}
\newcommand{\injar}{\ar@{^(->}} 
\newcommand{\prarrow}[2]{\ar@<0.5ex>[r]^-{#1} \ar@<-0.5ex>[r]_-{#2}}
\newcommand{\plarrow}[2]{\ar@<0.5ex>[l]^-{#1} \ar@<-0.5ex>[l]_-{#2}}
\newcommand{\pdarrow}[2]{\ar@<0.5ex>[d]^-{#1} \ar@<-0.5ex>[d]_-{#2}}
\newcommand{\puarrow}[2]{\ar@<0.5ex>[u]^-{#1} \ar@<-0.5ex>[u]_-{#2}}
\theoremstyle{plain}
\newtheorem{theorems}{Theorem}[section] 
\newtheorem{claims}{Claim}[theorems]
\newtheorem{conjectures}[theorems]{Conjecture}
\newtheorem{corollaries}[theorems]{Corollary}
\newtheorem{lemmas}[theorems]{Lemma} 
\newtheorem{props}[theorems]{Proposition}
\newtheorem{penmdef}[claims]{Definition}
\theoremstyle{definition}
\newtheorem{constructions}[theorems]{Construction}
\newtheorem{definitions}[theorems]{Definition} 
\newtheorem{axioms}[theorems]{Axiom} 
\newtheorem{examples}[theorems]{Example}     
\newtheorem{notations}[theorems]{Notation}         
\newtheorem{question}[theorems]{Question}
\newtheorem{obss}[theorems]{Observation}
\newtheorem{penmlem}[claims]{Lemma}
\newtheorem{penmthm}[claims]{Theorem}
\newtheorem{penmcor}[claims]{Corollary}
\newtheorem{penmeg}[claims]{Example} 
\newtheorem{inclaims}[claims]{Claim}
\theoremstyle{remark}
\newtheorem{remarks}[theorems]{Remark}
\newtheorem{penmrem}[claims]{Remark}
\newtheoremstyle{indented}
  {1pt}
  {1pt}
  {\addtolength{\@totalleftmargin}{1.5em}
   \addtolength{\linewidth}{-1.5em}
   \parshape 1 1.5em \linewidth}
  {}
  {\bfseries}
  {.}
  {.5em}
  {}
\theoremstyle{indented}
\newtheorem{pinddef}[claims]{Definition} 
\newtheorem{pindlem}[claims]{Lemma}
\newtheorem{pindthm}[claims]{Theorem}
\newtheorem{pindcor}[claims]{Corollary}
\newtheorem{pindeg}[claims]{Example} 
\newtheorem{pindrem}[claims]{Remark} 
\newtheorem{pindq}[claims]{Question} 
\newtheorem{pindc}[claims]{Conjecture} 
\newtheorem{pindclaim}[claims]{Claim}
\newenvironment{theorem}
{
	\pushQED{\qed}\begin{theorems}}
	{\popQED\end{theorems}}
\newenvironment{prop}
{
	\pushQED{\qed}\begin{props}}
	{\popQED\end{props}}
\newenvironment{notation}
{
	\pushQED{\qed}\begin{notations}}
	{\popQED\end{notations}}
\newenvironment{corollary}
{
	\pushQED{\qed}\begin{corollaries}}
	{\popQED\end{corollaries}}
\newenvironment{definition}
{
	\pushQED{\qed}\begin{definitions}}
	{\popQED\end{definitions}}
\newenvironment{lemma}
{
	\pushQED{\qed}\begin{lemmas}}
	{\popQED\end{lemmas}}
\newenvironment{remark}
{
	\pushQED{\qed}\begin{remarks}}
	{\popQED\end{remarks}}
\newenvironment{example}
{
	\pushQED{\qed}\begin{examples}}
	{\popQED\end{examples}}
\newenvironment{construction}
{
	\pushQED{\qed}\begin{constructions}}
	{\popQED\end{constructions}}
\newenvironment{enmlem}
{
	\pushQED{\qed} \begin{penmlem}}
	{\popQED\end{penmlem}}
\begin{document}
\maketitle
\begin{abstract} 
In a recent collaboration, Hiroki Matsui and the author introduced a new proof of the reconstruction theorem of Bondal--Orlov and Ballard, using Matsui's construction of a ringed space associated to a triangulated category. This paper first shows that these ideas can be applied to reconstructions of more general varieties from their perfect derived categories. For further applications of these ideas, we introduce the framework of a polarized triangulated category, a pair $(\mathcal T,\tau)$ consisting of a triangulated category $\mathcal T$ and an autoequivalence $\tau$ (called a polarization), to which we can associate a ringed space called the pt-spectrum. As concrete applications, we observe that several reconstruction results of Favero naturally fit within this framework, leading to both generalizations and new proofs of these results. Furthermore, we explore broader implications of polarizations and pt-spectra in tensor triangular geometry, noncommutative projective geometry, birational geometry and homological mirror symmetry.

\end{abstract}
\tableofcontents

\section{Introduction}
In \cite{ito2024new}, the authors provide a new proof of the reconstruction theorem of Bondal--Orlov and Ballard, which asserts that we can reconstruct a Gorenstein projective variety $X$ with (anti)ample canonical bundle $\omega_X$ from its perfect derived category $\perf X$ and moreover that if there is an equivalence $\perf X \simeq \perf Y$ for a projective variety $Y$, then we have $X \iso Y$. This result was originally established for smooth varieties by Bondal--Orlov \cite{bondal_orlov_2001} and later extended to Gorenstein cases by Ballard \cite{ballard2011derived}. A key ingredient of the new proof in \cite{ito2024new} is Matsui's construction of a ringed space associated to a triangulated category.

In this paper, we extend these techniques to a more general setting (Theorem \ref{thm: intro1}). This generalization simultaneously encompasses the reconstruction results of Bondal--Orlov and Ballard as well as the reconstruction of elliptic curves from their perfect derived categories (cf. \cite{huybrechts_2016}*{p.134}). Moreover, while the original theorem assumes projectivity (due to the (anti)ampleness condition on $\omega_X$), we demonstrate that properness is sufficient thanks to a recent work of Matsukawa \cite{matsukawa2025spectrum}. In particular, in the non-projective proper setting, the Proj of the canonical ring cannot give back the original variety, which was one of the key points in the original Bondal--Orlov reconstructions. Moreover, our set-up aligns with the fact that projectivity is not a property of the derived category in the sense that there are a projective variety and a non-projective proper variety whose derived categories are equivalent (e.g., when the latter is a certain non-projective flop of the former).

Next, we observe that the key ingredients underlying these proofs are the role of the Serre functor and the fact that any equivalence commutes with Serre functors. As a consequence, it turns out that it is natural to work with a new framework: a \textbf{polarized triangulated category}, defined as a pair $(\cal T, \tau)$, where 
$\cal T$ is a triangulated category equipped with an autoequivalence $\tau$ (which we call a \textbf{polarization}). This perspective unifies and extends previous reconstruction results and leads to further generalizations, including variants of the work of Favero \cite{FAVERO20121955} (Theorem \ref{intro:thm3}).

Finally, we explore broader applications of this framework. In particular, we show that polarized triangulated categories provide new insights into noncommutative projective geometry (in the sense of \cite{artin1994noncommutative}), Iitaka fibrations in birational geometry, and homological mirror symmetry.

Now, to explain these results in more detail, let us begin with a recap of \cite{ito2024new}. 
\begin{itemize}
    \item For any variety $X$, we can construct a ringed space $\spec_\vartriangle \perf X$, called the \textbf{Matsui spectrum}, using only the triangulated category structure of $\perf X$. Points in $\spec_\vartriangle \perf X$ correspond to certain thick subcategories of $\perf X$.
    \item For any variety $X$, we can construct a locally ringed space $\spec_{\tens_X} \perf X$, called the \textbf{Balmer spectrum}, using additional data of the monoidal structure $\tens_X:=\tens_{\ecal O_X}^\bb L$ on the triangulated category $\perf X$. It is known that we have an isomorphism $$\spec_{\tens_X^\bb L} \perf X \iso X$$ of (locally) ringed spaces. Moreover, if $X$ is quasi-projective, we can show the Balmer spectrum is an open ringed subspace of the Matsui spectrum:
    \[
    X \iso \spec_{\tens_X} \perf X \underset{\mathrm{open}}{\subset} \spec_\vartriangle \perf X.
    \]
    \item In the sequel, let us assume $X$ is a projective Gorenstein variety so that $\omega_X$ is a line bundle and $\perf X$ admits the Serre functor $\bb S := - \tens_{X} \omega_X[\dim X]$. Now, let us consider the subspace
    \[
    \spec^\ser \perf X := \{\cal P \in \spec_\vartriangle \perf X \mid \bb S(\cal P) = \cal P\} \subset \spec_\vartriangle \perf X,
    \]
    which we call the \textbf{Serre invariant locus}. 
    \item When $\omega_X$ is (anti)ample, we can show 
    \[
    \spec^\ser \perf X = \spec_{\tens_X} \perf X \underset{\mathrm{open}}{\subset} \spec_\vartriangle \perf X
    \]
    as ringed spaces, where the ringed space structure of the Serre invariant locus is given by the restriction of the structure sheaf of the Matsui spectrum. In particular, we have an isomorphism $X \iso \spec^\ser \perf X$, where the right hand side only uses the triangulated category structure of $\perf X$. Therefore, we have reconstructed the variety $X$ from the triangulated category structure of $\perf X$. 

    \item Note any exact equivalence $\perf X \simeq \perf Y$ induces an isomorphism 
    \[
    X \iso \spec^\ser \perf X \iso \spec^\ser \perf Y
    \]
    since an exact equivalence necessarily commutes with the Serre functors. Now, as in the second item above, we have that $\spec_{\tens_Y} \perf Y \iso Y$ is an open subscheme of $\spec^\ser \perf Y  \iso X$ while $Y$ is also a closed subscheme of $X$ as $Y$ is proper and $X$ is separated. Therefore, we must have $X \iso Y$ as $X$ is connected.
\end{itemize}
It is worth noting that the reconstruction theorems by Bondal--Orlov and Ballard are originally proven by identifying skyscraper sheaves and line bundles via homological methods including spectral sequences while our arguments above rely on topological properties of varieties.

Now, let us state our version of the Bondal--Orlov reconstruction. Here, $\spec^\ser \perf X$ is equipped with a ringed space structure, using only the triangulated category structure of $\perf X$.  
\begin{theorem}[Theorem \ref{thm:Bondal--Orlov}]\label{thm: intro1}
    Let $X$ be a Gorenstein proper variety of dimension $n$ over a field $k$ and suppose $\spec^\ser \perf X$ is a separated scheme over $k$ whose $n$-dimensional connected components (with reduced structure) are all isomorphic. Then, the following assertions hold:
    \begin{enumerate}
        \item The variety $X$ can be reconstructed as an $n$-dimensional connected component of $\spec^\ser \perf X$ with reduced structure, which is constructed by using only the triangulated category structure of $\perf X$. 
        \item If there is an exact equivalence $\perf X \simeq \perf Y$ for a variety $Y$, then $X \iso Y$. \qedhere
    \end{enumerate}
\end{theorem}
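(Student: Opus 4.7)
The plan is to run essentially the same open-plus-closed argument recalled in the introduction, but decoupling the two ingredients from the (anti)ampleness hypothesis: the weaker assumption on $\spec^\ser \perf X$ will replace the role that ampleness of $\omega_X$ played in forcing $\spec^\ser \perf X = \spec_{\tens_X} \perf X$.

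First, I would record the chain
\[
X \iso \spec_{\tens_X} \perf X \subset \spec^\ser \perf X \subset \spec_\vartriangle \perf X.
\]
The isomorphism is Balmer's reconstruction, the rightmost inclusion is open because $X$ is quasi-projective (recalled in the excerpt), and the middle inclusion is automatic: every prime thick $\tens_X$-ideal $\cal P$ is preserved by $\bb S = -\tens_X \omega_X[\dim X]$, since tensoring by the invertible object $\omega_X[\dim X]$ fixes any tensor ideal. Thus $X$ is realized as an open subscheme of $\spec^\ser \perf X$, using the ringed-space structure given in the excerpt, which agrees with the Balmer spectrum structure on the overlap.

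Second, using that $X$ is proper over $k$ while $\spec^\ser \perf X$ is separated over $k$ by hypothesis, the image of the open immersion $X \hookrightarrow \spec^\ser \perf X$ is also closed. Since $X$ is connected (as a variety), the image is then a single connected component $C$ of $\spec^\ser \perf X$, isomorphic to $X$ as a scheme; because $X$ is reduced, this passes to an isomorphism $X \iso C_{\red}$. Combined with the hypothesis that all reduced connected components of $\spec^\ser \perf X$ are mutually isomorphic, this gives (i).

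For (ii), any exact equivalence $\perf X \simeq \perf Y$ commutes with Serre functors and so induces an isomorphism $\spec^\ser \perf X \iso \spec^\ser \perf Y$ of ringed spaces, which under our hypothesis is an isomorphism of separated $k$-schemes with isomorphic reduced connected components. Running step two for both $X$ and $Y$ realizes each as a reduced connected component of this common space, so the hypothesis immediately yields $X \iso Y$. The main delicate point to verify carefully will be step one: that once $\spec^\ser \perf X$ is known to be a $k$-scheme, the open inclusion of $X$ is genuinely a morphism of $k$-schemes, so that the properness versus separatedness argument applies cleanly; everything else is routine topological and scheme-theoretic packaging.
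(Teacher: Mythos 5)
Your proposal is correct and follows essentially the same route as the paper's proof: realize $X \iso \spec_{\tens_X}\perf X$ as an open subscheme of $\spec^\ser\perf X$ (open in $\spec_\vartriangle \perf X$ by quasi-projectivity, contained in the Serre-invariant locus because tensoring by the shifted line bundle $\omega_X[\dim X]$ fixes every tensor ideal), then use properness of $X$ against separatedness of $\spec^\ser\perf X$ to see the image is also closed, hence a connected component, and conclude via the hypothesis that all reduced components are isomorphic. The one step you elide is in part (ii): to "run step two for $Y$" you must first run step one for $Y$, and your justification of the inclusion $\spc_{\tens_Y}\perf Y \subset \spc^\ser\perf Y$ presupposes that the Serre functor of $\perf Y$ is of the form $-\tens\omega_Y[\dim Y]$ with $\omega_Y$ a line bundle, i.e., that $Y$ is Gorenstein. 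This is true but needs to be recorded: since $\perf Y \simeq \perf X$ admits a Serre functor, $Y$ is automatically Gorenstein (this is exactly the point the paper makes, citing \cite{de2012reconstructing}). With that observation added, your argument coincides with the paper's.
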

For example, we can check if $X$ is an elliptic curve over an algebraically closed field or if $X$ is a Gorenstein projective variety with (anti)ample canonical bundle, then $X$ satisfies the supposition of the theorem. Indeed, for the former, the Serre invariant locus is a disjoint union of the original elliptic curves and for the latter, the Serre invariant locus is $X$ itself. It is also worth mentioning that the Serre invariant locus $\spec^\ser \perf X$ may have components of different dimension, and this possibility is geometrically subtle (Example \ref{example: preoplarization in perf E}, \ref{example: pt-ample and fixed points}). 

To see our version further indeed generalizes the version of \cite{ito2024new}, let us introduce the following notion.
\begin{definition}
    For a scheme $X$, we say a line bundle $\ecal L$ on $X$ is \textbf{$\tens$-ample} if the smallest thick category containing all the tensor powers of $\ecal L$ is the whole category $\perf X$, i.e., 
    \[
    \bra{\ecal L^{\tens n} \mid n \in \bb Z} = \perf X. 
    \]
    For a line bundle $\ecal L$ on $X$, let $\tau_\ecal L$ denote the autoequivalence on $\perf X$ given by $- \tens_{\ecal O_X}^\bb L \ecal L$.
\end{definition}
\begin{example}[Example \ref{example: tensor ample}] \ 
\begin{enumerate}
    \item For a Gorenstein proper variety $X$ with $\tens$-ample canonical bundle, we have 
    \[
    \spec^\ser \perf X = \spec_{\tens_X} \perf X \iso X.
    \]
    In particular, a Gorenstein proper variety with $\tens$-ample canonical bundle satisfies the supposition of Theorem \ref{thm: intro1}. 
    \item More specifically, a blow-up $S$ of a smooth projective surface with very ample canonical bundle at a point has the $\tens$-ample canonical bundle $\omega_S$, but $\omega_S$ is neither ample nor anti-ample. \qedhere
\end{enumerate}
\end{example}
Now, by abstracting the proof of Theorem \ref{thm: intro1}, we can realize what is playing an essential role here is the information of a single autoequivalence, the Serre functor, and its commutativity with exact equivalences. Moreover, we just observed that the Balmer spectrum can be realized as the fixed locus of a suitable single autoequivalence, which leads us to the following notions. 
\begin{definition} Let $\cal T$ be an essentially small triangulated category. 
\begin{enumerate}
    \item A pair $(\cal T, \tau)$ of a triangulated category $\cal T$ and its autoequivalence $\tau$ is called a \textbf{polarized triangulated category} (or a \textbf{pt-category} in short). In this context, let us refer to $\tau$ as a \textbf{polarization} on $\cal T$.
    \item For a pt-category $(\cal T, \tau)$, define its \textbf{pt-spectrum} to be $\spec^\tau \cal T = \{\cal P \in \spec_\vartriangle \cal T \mid \tau(\cal P) = \cal P\}$, where the structure sheaf is defined in the same way as the Matsui spectrum. 
    \item An exact functor $F:(\cal T_1, \tau_1) \to (\cal T_2,\tau_2)$ between pt-categories is said to be a \textbf{pt-functor} if there exists a natural isomorphism $F\circ \tau_1 \iso \tau_2 \circ F$.  \qedhere
\end{enumerate}
\end{definition}
A similar notion of a polarized triangulated category is also introduced in \cite{kuznetsov2021serre}, which is a special case of our definition, so it should not cause confusions. Here are some benefits of having the formalism of pt-categories in the context of tensor triangular geometry. 
\begin{itemize}
    \item For a nice enough tensor triangulated category, including the case of $(\perf X, \tens_X^\bb L)$ for a noetherian scheme $X$, we can view its Balmer spectrum as a pt-spectrum for a certain (weak) polarization (cf. Proposition \ref{prop: tt as weak pt}). This realization suggests that the study of pt-structures could provide new insights into tensor triangulated geometry (cf. Remark \ref{moduli}).   
    \item The classification of tt-structures on a given triangulated category is not yet well understood. However, classifying possible {polarizations} is often more tractable, as the autoequivalence group of \( \perf X \) is well understood for many varieties \( X \). In particular, we explicitly compute pt-spectra for \( \mathbb{P}^1 \) and an elliptic curve \( E \) (Examples \ref{example: preoplarization in perf P1}, \ref{example: preoplarization in perf E}). 
    \item In noncommutative geometry, the perfect derived category \( \perf R \) of right modules over a smooth proper dg-algebra \( R \) does not generally admit a natural tt-structure. However, it is known that the Serre functor still exists (\cite{shklyarov2007serre}), which provides a {canonical polarization}. In more geometric settings, categories with a Serre functor often arise, such as in the study of Kuznetsov components (\cite{kuznetsov2021serre}), which usually do not admit a natural tt-structure. Moreover, if a triangulated category contains a {spherical object}, it induces a polarization via its spherical twist (\cite{SeiTho01}). These structures suggest a role of pt-categories in capturing geometric structures beyond tensor triangular geometry.
\end{itemize}

The framework of pt-categories also raises the following natural questions in the context of the reconstruction of varieties.

\begin{question}
    Let \( X \) be a Gorenstein proper variety. Is there any sufficient categorical condition on a polarization \( \tau \) so that \( \spec^\tau \perf X \cong X \), or more generally so that \( \spec^\tau \perf X \) is a \textbf{Fourier--Mukai partner} of \( X \), i.e., 
    \[
    \perf X \simeq \perf \spec^\tau \perf X?
    \]
    Note that given such conditions, we can reconstruct all the possible Fourier--Mukai partners of \( X \) purely from the triangulated category structure of \( \perf X \).
\end{question}

Now, as more concrete applications of our framework, we provide new proofs and generalizations of several works of Favero in \cite{FAVERO20121955}. One such result is the following, which is stated more generally as Theorem \ref{thm: favero}. 
\begin{theorem}\label{intro:thm3}
    Let $X$ and $Y$ be proper varieties and suppose there is a pt-equivalence
    \[
    \Phi: (\perf X, \tau_{\ecal L}) \overset{\sim}{\to} (\perf Y, \tau),
    \]
    where (i) $\tau$ and $\Phi$ are Fourier--Mukai transforms, (ii) $\ecal L$ is a line bundle on $X$ such that $\ecal L^{\tens m}$ is a $\tens$-ample line bundle and $\h^l(X,\ecal L^{\tens m})\neq 0$ for some $m, l\in \bb Z$, and (iii) $\tau$ is a standard autoequivalence, i.e., $$\tau = f_*\circ\tau_\ecal M[n] \in \aut(X) \ltimes \pic(X) \times \bb Z[1].$$
    Then, we have $f^m = \id_Y$ and $X\iso Y$. 
\end{theorem}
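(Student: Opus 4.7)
The plan is to use the pt-equivalence $\Phi$ to transport a Matsui-spectrum identification on $X$ to $Y$, then extract $f^m$ from the Balmer-fixed part. Iterating the intertwiner $\Phi\circ\tau_\ecal L \iso \tau\circ\Phi$ yields a pt-equivalence $\Phi:(\perf X,\tau_{\ecal L^{\tens m}})\overset{\sim}{\to}(\perf Y,\tau^m)$, where $\tau^m = (f^m)_* \circ \tau_{\ecal M'} \circ [mn]$ with $\ecal M' = \bigotimes_{i=0}^{m-1}(f^i)^*\ecal M$. Since $\ecal L^{\tens m}$ is $\tens$-ample and hence pt-ample, $\spec^{\tau_{\ecal L^{\tens m}}}\perf X \iso \spec_{\tens_X}\perf X \iso X$, so $\spec^{\tau^m}\perf Y \iso X$ as ringed spaces. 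A Balmer prime $\cal P_y$ of $\perf Y$ is fixed by $\tau^m$ exactly when $f^m(y)=y$ (since $\tau_{\ecal M'}$ and $[mn]$ preserve every Balmer prime), so the fixed locus $Y^{f^m}$ embeds into $\spec^{\tau^m}\perf Y \iso X$ as an open subscheme via the open inclusion $\spec_{\tens_Y}\perf Y\subset\spec_\vartriangle\perf Y$.

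Next I would observe that $Y^{f^m}$ is also closed in $X$: as a closed subscheme of the projective variety $Y$ it is proper over $k$, and a proper subscheme realized as an open subscheme of a separated scheme is automatically closed. Since $X$ is connected, $Y^{f^m}$ is either empty or equal to $X$. To rule out emptiness I would invoke the cohomology hypothesis: a nonzero class in $\h^l(X,\ecal L^{\tens m})=\hom(\ecal O_X,\ecal L^{\tens m}[l])$ transports to a nonzero morphism $\alpha:F\to (f^m)_*F\tens\ecal M'[mn+l]$ with $F=\Phi\ecal O_X$, while the $\tens$-ampleness of $\ecal L^{\tens m}$ combined with $\Phi$ being an equivalence gives $\perf Y=\bra{\tau^{mk}F\mid k\in\bb Z}$; the support of any object in this thick hull is contained in a finite union $\bigcup_{k\in S}(f^m)^k(\supp F)$, and irreducibility of $Y$ then forces $\supp F=Y$. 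The main obstacle I anticipate is rigorously converting the nonvanishing of $\alpha$, together with $\supp F=Y$, into the existence of a fixed point of $f^m$. I plan to handle this by a Lefschetz-style argument: if $f^m$ were fixed-point-free the graph $\Gamma_{f^m}\subset Y\times Y$ would be disjoint from the diagonal $\Delta_Y$, which by a disjoint-support computation (e.g.\ via the $F^\vee\tens(f^m)_*F\tens\ecal M'$ description of these Homs, or by restricting an appropriate kernel to $\Gamma_{f^m}\cap\Delta_Y$) would force all Hom spaces of the form $\hom_Y(F,(f^m)_*F\tens\ecal M'[\ast])$ to vanish, contradicting $\alpha\neq 0$.

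Granting this, $Y^{f^m}=X$, and combining with $\dim X=\dim Y$ (a derived invariant of projective varieties) together with the irreducibility of $Y$ yields $Y^{f^m}=Y$ as schemes, i.e.\ $f^m=\id_Y$. Once $f^m=\id_Y$, the autoequivalence $\tau^m=\tau_{\ecal M'}[mn]$ is just a shift-and-twist by a line bundle and therefore fixes every Balmer prime of $\perf Y$, so $Y\iso\spec_{\tens_Y}\perf Y$ sits as an open subscheme of $\spec^{\tau^m}\perf Y\iso X$. Applying the same clopen-plus-connectedness argument to $Y\subseteq X$ (using that $Y$ is proper and $X$ is separated and connected) then forces $Y=X$, giving $X\iso Y$.
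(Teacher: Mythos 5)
Your overall architecture matches the paper's: reduce to the $m$-th iterate, use pt-ampleness of $\ecal L^{\tens m}$ to identify $\spec^{\tau^m}\perf Y$ with $X$, locate $Y^{f^m}$ as an open subscheme of $X$ via the Balmer spectrum (twists by line bundles and shifts fix every Balmer prime), and then conclude $f^m=\id_Y$ and $X\iso Y$. Your clopen-plus-connectedness step in place of the paper's dimension count is a harmless variation, and your argument that $\supp\Phi(\ecal O_X)=Y$ is correct (though not needed).

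The genuine gap is exactly the step you flag as the main obstacle: producing a fixed point of $f^m$ from $\h^l(X,\ecal L^{\tens m})\neq 0$. Your proposed resolution via the object-level Hom spaces $\hom_Y(F,(f^m)_*F\tens\ecal M'[\ast])$ with $F=\Phi(\ecal O_X)$ cannot work: both $F$ and $(f^m)_*F\tens\ecal M'$ have full support $Y$ (you prove this yourself), so there is no disjointness of supports to exploit, and the implication ``$f^m$ fixed-point-free $\Rightarrow$ these Homs vanish'' is false. Concretely, for $Y=E$ an elliptic curve, $f$ a nontrivial translation, $\ecal M'=\ecal O_E$ and $F=\ecal O_E$, one has $\hom_E(\ecal O_E,f_*\ecal O_E)=\hom_E(\ecal O_E,\ecal O_E)=k\neq 0$ even though $f$ has no fixed point (this is essentially the paper's Example \ref{example: pt-ample and fixed points}, which shows the cohomological hypothesis is genuinely needed and genuinely delicate). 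The correct disjoint-support argument, which is the paper's, must be run one level up, on $Y\times Y$: writing $\Phi=\Phi_{\ecal K}$ and $\Phi\inv=\Phi_{\ecal K'}$ (Fourier--Mukai representability is available here because $X$ is projective, by \cite{ballard2009equivalences}), the induced equivalence $\Phi_{\ecal K\boxtimes\ecal K'}:\perf(X\times X)\simeq\perf(Y\times Y)$ carries the kernel of $\tau_{\ecal L}^{\,m}$ to the kernel $(\id_Y\times f^m)_*\ecal M_m[\ast]$ of $\sigma^m$, and one computes
\[
0\neq\hom_X(\ecal O_X,\ecal L^{\tens m}[l])\iso\hom_{Y\times Y}\bigl(\Delta_*\ecal O_Y,(\id_Y\times f^m)_*\ecal M_m[\ast]\bigr)\iso\hom_Y\bigl(\ecal O_Y,\Delta^!(\id_Y\times f^m)_*\ecal M_m[\ast]\bigr),
\]
where now the two kernels are supported on the diagonal $\Delta_Y$ and on the graph of $f^m$ respectively; nonvanishing forces these closed sets to meet, i.e.\ $Y^{f^m}\neq\emptyset$. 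Your parenthetical mention of ``restricting an appropriate kernel to $\Gamma_{f^m}\cap\Delta_Y$'' points in this direction, but as written the proposal neither constructs the product equivalence nor identifies the transported kernel, and the object-level alternative you lean on is invalid.
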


In the rest of the paper, we explore broader applications of the perspectives of pt-categories. 

In Subsection \ref{subsec: Iitaka}, we begin by observing that given a projective variety $X$ with an ample line bundle $\ecal L$, there are two ways of recovering $X$ from $(\perf X, \tau_\ecal L)$:
\[
\spec^{\tau_\ecal L}\perf X \iso X \iso \proj \cal R_\ecal L,
\]
where $\cal R_\ecal L$ denotes the \textbf{section ring} of $\ecal L$: $$\cal R_\ecal L:= \oplus_{n\geq 0}\Gamma(X, \ecal L^{\tens n}) = \oplus_{n\geq 0} \hom_{\perf X}(\ecal O_X, \tau_\ecal L^n(\ecal O_X)).$$ As a side remark, this highlights that the reconstruction via $\proj \cal R_\ecal L$ actually uses the extra information of the object $\ecal O_X \in \perf X$. Now, based on the isomorphism $X \iso \proj \cal R_\ecal L$, Artin-Zhang (\cite{artin1994noncommutative}) introduced the \textbf{noncommutative projective scheme} ${\proj}^\sf{nc} A$ associated to a right noetherian $\bb N$-graded algebra $A$. It consists of the triple $(\operatorname{\sf{qgr}} A, s_A, A)$, where:
\begin{itemize}
    \item The symbol $\operatorname{\sf{qgr}} A$ denotes an abelian category constructed from the abelian category of graded $A$-modules so that for the section ring $\cal R_\ecal L$ of an ample line bundle $\ecal L$ on $X$, we get an equivalence  $\operatorname{\sf{qgr}}\cal R_\ecal L\simeq \operatorname{\sf{coh}}X$, where $\operatorname{\sf{coh}}X$ is the abelian category of coherent sheaves on $X$. In particular, we can think of $\operatorname{\sf{qgr}}A$ as the abelian category of coherent sheaves on the noncommutative projective scheme $\proj^\sf{nc}A$.
    \item The symbol $s_A$ denotes an exact autoequivalence of $\operatorname{\sf{qgr}} A$ and we have $A \in \operatorname{\sf{qgr}} A$. Under the equivalence $\operatorname{\sf{qgr}}\cal R_\ecal L \simeq \operatorname{\sf{coh}}X$, $s_{\cal R_\ecal L}$ and $\cal R_\ecal L$ correspond to $-\tens \ecal L$ and $\ecal O_X$, respectively.
\end{itemize} 
 By suitably deriving $\operatorname{\sf{qgr}}A$ to $\perf \proj^\sf{nc}A$, we obtain the chain of isomorphisms:
\[
\spec^{\tau_\ecal L}\perf X  \iso \proj \cal R_\ecal L \iso \spec^{s_{\cal R_\ecal L}}\perf \proj^\sf{nc} \cal R_\ecal L.
\]

Now, we can make two key observations from the isomorphisms:
\begin{itemize}
    \item For a general right noetherian $\bb N$-graded algebra $A$, $\proj A$ no longer makes sense, but the ringed space $\spec^{s_{A}}\perf \proj^\sf{nc} A$ remains well-defined, so we propose it is a geometric realization of $\proj^\sf{nc} A$. A natural direction for future research is to investigate the extent to which the geometry of $\proj^\sf{nc} A$ is captured by $\spec^{s_{A}}\perf \proj^\sf{nc} A$. 
    \item When we set $\ecal L=\omega_X$ for a smooth projective variety $X$, the left hand side becomes the Serre invariant locus $\spec^\ser \perf X$ while the right hand side becomes the so called canonical model $X^\sf{can} = \proj \ecal R_{\omega_X}$ of $X$, which is a birational invariant of smooth projective varieties. When $\omega_X$ has positive Iitaka dimension, $X$ and $X^\sf{can}$ are related by a canonical rational map $X \ratmap X^\sf{can}$, called the \textbf{Iitaka fibration}. Namely, in such a case, we have the following relations:
    \[
    \spec^\ser \perf X \supset \spec_{\tens_X} \perf X \iso X \ratmap X^\sf{can} = \proj \cal R_{\omega_X} \iso \spec^{s_{\cal R_{\omega_X}}}\perf \proj^\sf{nc} \cal R_{\omega_X}.
    \]
    Since both the left-hand side and the right-hand side are birational invariants of Fourier--Mukai partners, further investigation can lead to a universal understanding and categorification of Iitaka fibrations across all such partners. 
\end{itemize}

In Subsection \ref{subsec: mirror}, we explore whether algebraic mirror partners in homological mirror symmetry can be realized as pt-spectra. To fix notations, suppose we have an equivalence  
\[
\perf X \simeq \operatorname{\mathsf{Fuk}}(M,\omega),
\]
where $X$ is a smooth variety and $\operatorname{\mathsf{Fuk}}(M,\omega)$ denotes a Fukaya-like category associated to a symplectic manifold $(M,\omega)$, possibly with additional data. Given that there can exist non-isomorphic Fourier--Mukai partners, the following natural questions arise.  

\begin{question}
    Given a symplectic manifold $(M,\omega)$, is there a canonical choice of an algebraic mirror partner? What data of the symplectic manifold determines its mirror partner(s)? 
\end{question}

From our perspective, these questions can be reformulated as follows:  

\begin{question}
    Is there a symplecto-geometric method for constructing a polarization $\tau$ on $\operatorname{\mathsf{Fuk}}(M, \omega)$ such that $\spec^\tau \operatorname{\mathsf{Fuk}}(M,\omega)$ defines an algebraic mirror partner of a symplectic manifold $(M,\omega)$ possibly together with additional data? Furthermore, how canonical is such a mirror partner?  
\end{question}

In certain cases, the answer is affirmative. In this paper, we focus on homological mirror symmetry of log Calabi-Yau surfaces, as studied in \cite{hacking2021symplectomorphisms} and \cite{hacking2023homological}. In particular, we reinterpret their results to show that algebraic mirrors can indeed be constructed as pt-spectra of Fukaya categories, using only symplecto-geometric data (Proposition \ref{prop: mirror partner as spectra}). Our approach makes the relation of choices of Lagrangian fibrations and Fourier--Mukai partners under homological mirror symmetry more explicit. 

Finally, in the appendix, we briefly discuss the functoriality of pt-spectra.  

The author would like to express gratitude for insightful discussions with Hiroki Matsui, David Nadler, John Nolan and Noah Olander.

\section{Geometry in Matsui spectrum}

Throughout this paper, we work over a field $k$. In particular, a scheme and an algebra are assumed to be over $k$ and a variety is assumed to be an integral and separated scheme of finite type over $k$. Given a ringed space $X$, $|X|$ denotes its underlying topological space. A module over a ring $R$ is a right $R$-modules unless otherwise specified and $\D(R)$ denotes the unbounded derived category of (right) $R$-modules and $\perf R$ denotes the perfect derived category of (right) $R$-modules. For a quiver $\sf Q$, let $k\sf Q$ denote its path algebra over $k$ and let $\tens_\sf{rep}$ denote the vertex-wise tensor product for (right) $k\sf Q$-modules (thought of as quiver representations over $\sf{Q}$). 

Now, let us quickly fix notations for Balmer's theory of tensor triangulated geometry and Matsui's theory of triangular spectra. For details of constructions and results, see \cites{Balmer_2005, matsui2023triangular}. 
\begin{notation}\label{notationa: prelim}  A triangulated category is assumed to be $k$-linear and \textbf{essentially small} (i.e., having a set of isomorphism classes of objects) and functors/structures are assumed to be $k$-linear.
\begin{enumerate} 
    \item Let $\mathcal{T}$ be a triangulated category and let $\Th(\mathcal{T})$ denote the set of \textbf{thick subcategories} (i.e., triangulated full subcategories closed under direct summand). Note that $\Th(\mathcal{T})$ is a poset (and indeed a complete and distributive lattice) with respect to inclusions and that we can equip $\Th(\mathcal{T})$ with topology, which we call the \textbf{Balmer topology}, whose closed subsets are of the form $$V(\mathcal{E}) = \{\mathcal{I} \in \Th(\mathcal{T})\mid \mathcal{
    I}\cap \mathcal{E} = \emptyset \},$$ where $\mathcal{E} \subset \ob \mathcal{T}$ is a collection of objects. Also, for a collection $\cal E$ of objects in $\cal T$, let $\bra{\cal E}$ denote the smallest thick subcategory containing $\cal E$. 
    \item We say a thick subcategory $\mathcal{P}$ of a triangulated category $\cal T$ is a \textbf{prime thick subcategory} if the subposet $\{\mathcal{I}\mid \mathcal{I}\supsetneq \mathcal{P}\} \subset \Th(\mathcal{T})$ has the smallest element. Let $\spc_\vartriangle \mathcal T$ denote the subspace of prime thick subcategories in $\Th\mathcal{T}$, which we call the \textbf{Matsui spectrum} or the \textbf{triangular spectrum} of $\cal T$. 
    \item Let $(\mathcal T,\otimes)$ be a \textbf{tensor triangulated category} (in short, a \textbf{tt-category}), i.e., a triangulated category $\cal T$ with a symmetric monoidal structure $\tens$ that is triangulated in each variable, called a \textbf{tt-structure}. A \textbf{tt-functor} is a monoidal functor between tt-categories whose underlying functor is triangulated.
    \item Let $\cal T$ be a triangulated category. A smooth projective variety $X$ is said to be a \textbf{Fourier--Mukai partner} of $\cal T$ if there exists a triangulated equivalence $\perf X \simeq \cal T$. Let $\FM \cal T$ denote the set of isomorphism classes of Fourier--Mukai partners of $\cal T$. 
    \item Let  $(\cal T,\tens)$ be a tt-category. A thick subcategory $\mathcal I$ is said to be a \textbf{$\otimes$-ideal} if for any $M \in \mathcal T$ and $N \in \mathcal I$, we have $M\otimes N \in \mathcal I$. Similarly, a $\tens$-ideal $\cal P$ is said to be a \textbf{prime $\otimes$-ideal} if $\cal P \neq \cal T$ and $M\tens N \in \cal P$ implies $M \in \cal P$ or $N \in \cal P$. Let $\spc_\otimes \mathcal T$ denote the subspace of prime $\otimes$-ideals in $\Th\mathcal{T}$, which we call the \textbf{Balmer spectrum} or the \textbf{tt-spectrum} of $(\cal T,\tens)$.
    \item Let $(\cal T, \tens)$ be a tt-category. We say an object $M \in \cal T$ is \textbf{$\tens$-invertible} if there exists $N \in \cal T$ such that $M \tens N \iso \bb 1$ and let $\pic(\cal T,\tens)$ denote the group of isomorphism classes of $\tens$-invertible objects. For an invertible object $M \in \pic (\cal T, \tens)$, let
    \[
    \tau_M := -\tens M
    \]
    denote the corresponding autoequivalence of $\cal T$ if doing so will not cause any confusion about which tt-structure we are using. 
    \item The group $\auteq \mathcal{T}$ of the natural isomorphism classes of autoequivalences of $\mathcal{T}$ acts on $\Th(\mathcal{T})$ both as lattice isomorphisms and as homeomorphisms by $$\auteq \cal T \times \Th \cal T \ni (\tau, \mathcal I) \mapsto \tau\inv (\mathcal I) \in \Th \cal T.$$ The action restricts to an action on $\spc_\vartriangle \mathcal{T}$. On the other hand, the action does not restrict to $\spc_\tens \cal T$ in general. Letting $\End \cal T$ denote the monoid of natural isomorphism classes of triangulated endofunctors on $\Th \cal T$, this action naturally extends to the monoid action
    \[
    \End \cal T \times \Th \cal T \ni (\tau,\cal I) \mapsto \tau\inv(\cal I) \in \Th \cal T. 
    \]
    \item For $\tau \in \End \mathcal{T}$, let $$\spc^\tau\mathcal{T} =\{\cal P \in \spc_\vartriangle \cal T \mid \cal P = \tau\inv(\cal P)\} \subset \spc_\vartriangle \cal T$$ denote the subspace of fixed points by the action of $\tau$. When $\cal T$ admits the Serre functor $\bb S$, we define the \textbf{Serre invariant locus} to be $$\spc^\ser \cal T:= \spc^\bb S \cal T.$$ 
    Similarly, for $\tau \in \End \cal T$, write 
    \[
    \Th^\tau \cal T = \{ \cal I \in \Th \cal T \mid \cal I = \tau\inv(\cal I) \}. \qedhere
    \]
\end{enumerate} 
\end{notation}
Let us recall the construction of the structure sheaf on the Balmer spectrum for readers' convenience. 
\begin{notation}\label{notation: models of loc and ic}
    In the rest of the paper, we fix the following models for localization and idempotent completion, which are the same as ones fixed in \cite{Balmer_2002}*{\href{https://arxiv.org/pdf/math/0111049.pdf}{2.11}, \href{https://arxiv.org/pdf/math/0111049.pdf}{2.12}}. See loc. cit. for more details. 
    \begin{enumerate}
        \item Let $\cal K$ be a triangulated category and let $\cal I$ be a thick subcategory of $\cal K$. Then, the localization $\cal K/\cal I$ can be modeled by setting objects to be the same as ones of $\cal K$ and morphisms to be equivalence classes of fractions $$\overset{s}\leftarrow \ {\rightarrow}$$ with $\operatorname{cone}(s) \in \cal I$ (cf. e.g., \cite{Verdier}*{\href{http://www.numdam.org/item/AST_1996__239__R1_0}{Chapter II}}).   
        Note that given a chain of thick subcategories $\cal I \subset \cal J \subset \cal K$, there is a canonical strictly commutative diagram
        \begin{center}
            \begin{tikzcd}
                         & \cal K \arrow[ld] \arrow[rd] &               \\
\cal K/\cal I \arrow[rr] &                              & \cal K/\cal J
\end{tikzcd}
        \end{center}
        where each functor is the identity on the object and the canonical quotient map on the hom set.
        \item Let $\cal K$ be a triangulated category (or more generally an additive category). We fix the model of idempotent completion $\cal K^\natural$ to be the \textbf{Karoubi envelope}, i.e., we set objects to be the pair $(M,e)$, where $e:M\to M$ is an idempotent in an object $M$, and morphisms $(M,e) \to (N,f)$ are morphisms $\phi:N\to N$ in $\cal K$ such that $f\circ \phi = \phi = \phi \circ e$. Note any triangulated functor $F: \cal K \to \cal L$ induces a unique canonical functor 
        \[
        F^\natural:\cal K^\natural \to \cal L^\natural,
        \]
        which sends an object $(M,e)$ to $(F(M),F(e))$ and a morphism $\phi$ to $F(\phi)$.
    \end{enumerate}
    For a triangulated category $\cal K$ and a thick subcategory $\cal I \subset \cal K$, we write the compositions of the canonical functors by  
    \[
    q_\cal I: \cal K \to \cal K/\cal I \to  (\cal K/\cal I)^\natural. \qedhere
    \]
\end{notation}
One reason for fixing models is to make sense of the strict equalities in the following construction:
\begin{construction}\label{construction:restriction maps}
    Let $\cal K$ be a triangulated category and let $\cal I \subset \cal J \subset \cal K$ be a chain of thick subcategories. Then, we get a canonical functor
    \[
    \rho_{\cal J \cal I}:(\cal K/\cal I)^\natural \to (\cal K/\cal J)^\natural 
    \]
    induced by the canonical functor $\cal K/\cal I \to \cal K/\cal J$. In particular, the following diagram strictly commutes:
    \begin{center}
        \DisableQuotes
        \begin{tikzcd}
                                                            & \cal K \arrow[ld] \arrow[rd] &                          \\
\cal K/\cal I \arrow[rr] \arrow[d]                          &                              & \cal K/\cal J \arrow[d]  \\
(\cal K/\cal I)^\natural \arrow[rr, "\rho_{\cal J\cal I}"'] &                              & (\cal K/\cal J)^\natural
\end{tikzcd}
    \end{center}
    and hence we have the strict equality
    \[
    q_{\cal J} = \rho_{\cal J\cal I} \circ q_\cal I. 
    \]
Now, by construction, for any chain $\cal I \subset \cal J \subset \cal L \subset \cal K$ of thick subcategories, we have the strict equality:
\[
\rho_{\cal L \cal I} = \rho_{\cal L\cal J}\circ \rho_{\cal J\cal I}. \qedhere
\]
\end{construction}
Now, we construct a structure sheaf on the Balmer spectrum. 
\begin{construction}\label{construction: structure sheaf of tt-spectra}
    Let $(\cal T, \tens)$ be a tt-category with unit $\bb 1$. We define a structure sheaf $\ecal O_{\cal T, \tens}$ on $\spc_\tens \cal T$ as follows. First, for an open subset $U \subset \spc_\tens \cal T$, set $\cal T^U = \cap_{\cal P \in U} \cal P$, which is clearly a $\tens$-ideal (where we set $\cal T^\emp = \cal T$). Then, the triangulated category
    \[
    \cal T(U):= (\cal T/\cal T^U)^\natural 
    \]
    has a canonical tt-structure $\tens_U$ induced by $\tens$ whose unit $\bb 1_U$ is the image of $\bb 1$ under the canonical functor $q_U:=q_{\cal T^U}:\cal T \to \cal T(U)$ (cf. Construction \ref{construction:restriction maps}). Now, we set 
    \[
    \ecal O_{\cal T, \tens}^\sf{pre}(U) := \End_{\cal T(U)}(\bb 1_U),
    \]
    which is commutative since $\bb 1_U$ is the unit of the symmetric monoidal structure $\tens_U$. Now, recall the following claim, which follows by Construction \ref{construction:restriction maps}. 
    \begin{enmlem}[\cite{Balmer_2002}*{\href{https://arxiv.org/pdf/math/0111049.pdf}{Proposition 5.3}}]\label{lemma: balmer structure}
        Let $(\cal T,\tens)$ be a tt-category. Let $V_1 \subset V_2$ be open subsets of $\spc_{\tens}\cal T$. Then, for the canonical tt-functor $$\rho_{V_1,V_2}:=\rho_{\cal T^{V_1},\cal T^{V_2}}:\cal T(V_2) \to \cal T(V_1),$$ we have the strict equality $q_{V_1} = \rho_{V_1V_2}\circ q_{V_2}$. Moreover, for any triple of open subsets $V_1\subset V_2 \subset V_3$ in $\spc_{\tens}\cal T$, we have the strict equality:
        \[
        \rho_{V_1V_3} = \rho_{V_1V_2}\circ\rho_{V_2V_3}.\qedhere
        \]
    \end{enmlem}
    Therefore, the association
    \[
    \ecal O_{\cal T,\tens}^\sf{pre}:\l(U \subset V\r) \mapsto \l(\End_{\cal T(V)}(\bb 1_V) \overset{\rho_{UV}}{\longrightarrow}\End_{\cal T(U)}(\bb 1_U)\r)
    \]
    is a presheaf of commutative rings and we define the structure sheaf $\ecal O_{\cal T, \tens}$ on $\spc_\tens \cal T$ to be its sheafification. The corresponding ringed space is denoted by $\spec_\tens \cal T$. 

    Note that we can generalize the construction to any subspace $X \subset \Th \cal T$ and an object $M \in \cal T$ instead of $\spc_\tens \cal T \subset \spc_\vartriangle \cal T$ and $\bb 1 \in \cal T$. More precisely, the association
    \[
    \ecal O_{X,M}^\sf{pre}:\l(U \subset V \subset X\r) \mapsto \l(\End_{\cal T(V)}(q_V(M)) \overset{\rho_{UV}}{\longrightarrow}\End_{\cal T(U)}(q_U(M))\r)
    \]
    is a presheaf of (not necessarily commutative) rings, where $\rho_{UV}$'s and $q_U$'s are suitably generalized, e.g., for any open subset $U \subset X$, we set 
\[
\cal T(U):= (\cal T/\cap_{\cal P \in U} \cal P)^\natural.
\] Now, we can define the sheaf $\ecal O_{X, M}$ of rings on $\spc_\tens \cal T$ to be its sheafification. 
    \end{construction}
We can also equip the Matsui spectrum with a ringed space structure $\spec_\vartriangle \cal T = (\spc_\vartriangle \cal T, \ecal O_{\cal T, \vartriangle})$ as follows:
\begin{construction}\label{constr: Matsui structure sheaf}
Let $\cal T$ be a triangulated category. The \textbf{center} $Z(\cal T)$ of a triangulated category $\cal T$ is defined to be the commutative ring of natural transformations $\alpha: \id_\cal T \to \id_\cal T$ with $\alpha[1] = [1] \alpha$, where the multiplication is defined by composition. We say an element $\alpha \in Z(\cal T)$ is \textbf{locally nilpotent} if $\alpha_M$ is a nilpotent element in the endomorphism ring $\End_\cal T(M)$ for each $M \in \cal T$. Let $Z(\cal T)_\sf{lnil}$ denote the ideal of locally nilpotent elements and set $Z(\cal T)_\sf{lrad}:= Z(\cal T)/Z(\cal T)_\sf{lnil}$.  

As in Construction \ref{construction: structure sheaf of tt-spectra}, we can make the association 
\[
U \mapsto Z(\cal T(U))_\sf{lred}. 
\]
into a presheaf $\ecal O_{\cal T,\vartriangle}^\sf {pre}$ on $\spc_\vartriangle \cal T$ and define the structure sheaf $\ecal O_{\cal T,\vartriangle}$ on $\spc_\vartriangle \cal T$ to be its sheafification. The corresponding ringed space is denoted by $\spec_\vartriangle\cal T$. See \cite{matsui2023triangular} for more details. 

Furthermore, as remarked in \cite{hirano2024FMlocusK3}*{\href{https://arxiv.org/pdf/2405.01169}{Section 1.3}} (see also \cite{Balmer_2002}*{\href{https://www.math.ucla.edu/~balmer/Pubfile/Reconstr.pdf}{Definition 6.6.}}), note that the same construction indeed defines a sheaf on any subspace $X \subset \Th \cal T$, which will be denoted by $\ecal O_{X, \vartriangle}$ or simply $\ecal O_X$ if doing so may not cause any confusion. To be more precise, $\ecal O_{X,\vartriangle}$ is the sheafification of the presheaf
\[
X \underset{\text{open}}{\supset} U \mapsto Z(\cal T(U))_\sf{lred},
\]
where as before we set 
\[
\cal T(U):= (\cal T/\cap_{\cal P \in U} \cal P)^\natural.
\]
Note Balmer's definition (\cite{Balmer_2002}*{\href{https://www.math.ucla.edu/~balmer/Pubfile/Reconstr.pdf}{Definition 6.6.}}) corresponds to the case when we set $X = \spc_\tens \cal T$ for a tt-structure $\tens$ on $\cal T$. Namely, using their notation, we have the relation
\[
\operatorname{Space}_\text{pt.red}(\cal T,\tens) = (\spc_\tens \cal T, \ecal O_{\spc_\tens \cal T,\vartriangle}). \qedhere
\]
\end{construction}
Now, the following are important aspects of the theories of Balmer and Matsui spectra in terms of algebraic geometry, where the order of references aligns with the order of the statements.
\begin{theorem}[\cite{Balmer_2005}*{\href{https://arxiv.org/pdf/math/0409360.pdf}{Theorem 6.3}}, \cite{Matsui_2021}*{\href{https://arxiv.org/pdf/2102.11317.pdf}{Corollary 3.8}}, \cite{Balmer_2002}*{\href{https://www.math.ucla.edu/~balmer/Pubfile/Reconstr.pdf}{Theorem 8.5.}}, \cite{matsukawa2025spectrum}*{\href{https://arxiv.org/abs/2505.02724v1}{Theorem 2.11.}}]\label{tt and triangle}
    Let $\cal T$ be a triangulated category with $\cal T \simeq \perf X$ for a noetherian scheme $X$. Fix a tt-structure $\tens$ on $\cal T$ so that $(\cal T, \tens)$ is tt-equivalent to $(\perf X, \tens_{\ecal O_X}^\bb L)$. Then, the following assertions hold:
    \begin{enumerate}
        \item We have $\spec_{\tens} \cal T \iso X$ as ringed spaces. 
        \item We have the inclusion $\spc_{\tens} \cal T \subset \spc_\vartriangle \cal T$ as topological spaces. 
        \item We have $(\spc_\tens \cal T, \ecal O_{\spc_\tens \cal T, \vartriangle})\iso X_\sf{red}$ as ringed spaces (cf. Construction \ref{constr: Matsui structure sheaf} for notations). 
        \item We have an open immersion $$X_\sf{red} \iso (\spc_\tens \cal T, \ecal O_{\spc_\tens \cal T, \vartriangle})\inj \spec_\vartriangle \cal T$$ of ringed spaces whose underlying continuous map is the inclusion in (ii). \qedhere
    \end{enumerate}
\end{theorem}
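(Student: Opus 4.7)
The four assertions are each attributed to one of the cited papers, so the plan is to explain how each piece is established and, more importantly, how they assemble coherently. First, for (i), I would invoke Balmer's reconstruction theorem: for a noetherian scheme $X$, the map
\[
X \to \spc_{\tens}\perf X,\qquad x \mapsto \cal P_x := \{E \in \perf X \mid E_x \simeq 0\},
\]
is a bijection onto the set of prime $\tens$-ideals, and it upgrades to a homeomorphism once one identifies the closed subset $V(\cal E)$ on the Balmer side with the support locus $\cup_{E \in \cal E}\supp E$ on the scheme side. For the structure sheaf, one computes $\End_{\cal T(U)}(\bb 1_U) = \Gamma(U,\ecal O_X)$ by recognizing the localization $(\perf X/\cal T^U)^\natural$ as $\perf U$ (using that $\cal T^U$ is generated by objects with support in the complement of $U$); sheafification is already built in on the right-hand side, so the comparison of presheaves suffices.

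For (ii), I would appeal to Matsui's Corollary 3.8: any prime $\tens$-ideal $\cal P$ is a prime thick subcategory, since the poset of thick subcategories strictly containing $\cal P$ admits a smallest element, namely $\bra{\cal P, \bb 1}$ (or more intrinsically, the smallest thick subcategory containing $\cal P$ together with some object not in $\cal P$ that is idempotent with respect to $\tens$). This gives the set-theoretic inclusion $\spc_{\tens}\cal T \subset \spc_\vartriangle \cal T$; that it is a topological embedding follows from the fact that Balmer's closed sets $V(\cal E)$ are defined by exactly the same formula in both spectra.

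For (iii), the key observation is that the structure sheaf of $\spec_\vartriangle \cal T$ uses the center $Z(\cal T(U))$ modulo locally nilpotent transformations, and for any tt-category the unit induces a ring map $Z(\cal T(U)) \to \End_{\cal T(U)}(\bb 1_U)$ whose composite with the canonical map to the reduced quotient is an isomorphism when $\cal T(U)\simeq \perf U$; here part (i) identifies $\End_{\cal T(U)}(\bb 1_U)$ with $\Gamma(U,\ecal O_X)$, and the $\sf{lred}$ quotient accounts for nilpotents, producing $\Gamma(U,\ecal O_{X_\sf{red}})$. This is Balmer's Theorem 8.5, and the comparison is functorial in open inclusions by Construction \ref{construction:restriction maps} applied to the canonical maps $\rho_{UV}$.

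Finally, for (iv), the additional input is quasi-projectivity, which guarantees an ample line bundle $\ecal L$ on $X$. The plan here is to exhibit every point of $\spc_\tens \cal T$ as possessing an open neighborhood in $\spc_\vartriangle \cal T$ that already lies in $\spc_\tens \cal T$: concretely, for a section $s \in \Gamma(X,\ecal L^{\tens n})$, the cone $\operatorname{cone}(s:\ecal O_X \to \ecal L^{\tens n})$ generates an open complement in $\spc_\vartriangle \cal T$, and these basic opens cover $\spc_\tens \cal T$ and are contained in it because their points automatically invert the $\tens$-ideal criterion. The hard part of the overall assembly is precisely this openness statement, since it is what requires projectivity (or at least an ample family) rather than just properness, and it is exactly what fails in the non-projective examples alluded to in the introduction; I would therefore follow \cite{ito2024new}*{Theorem 3.2} closely, verifying that the open immersion on the underlying topological spaces is compatible with the structure sheaves already matched in (iii).
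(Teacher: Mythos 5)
The paper offers no proof of this theorem at all: it is a pure citation statement, with each of the four assertions attributed to one of the listed references. Your plan of assembling the four cited results, with the hard analytic content (the openness in (iv)) explicitly deferred to \cite{ito2024new}*{Theorem 3.2}, is therefore exactly the paper's approach, and your sketches of (i), (iii) and (iv) accurately reflect what those references do.

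There is, however, one concrete error in your sketch of (ii). The smallest thick subcategory strictly containing a prime $\tens$-ideal $\cal P$ is \emph{not} $\bra{\cal P, \bb 1}$ in general. Take $X=\bb P^1$ and $\cal P=\cal P_x=\{E \mid x\notin\supp E\}$ for a closed point $x$. Since $\cal P_x$ contains skyscrapers $k(y)$ for $y\neq x$ and the triangle $\ecal O(-1)\to\ecal O\to k(y)$ puts $\ecal O(-1)$ into $\bra{\ecal O,k(y)}$, one gets $\bra{\cal P_x,\ecal O}=\perf\bb P^1$; yet the thick subcategory of complexes with zero-dimensional support sits strictly between $\cal P_x$ and $\perf\bb P^1$. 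The actual minimum in Matsui's argument is $\bra{\cal P_x\cup\{G\}}$ for a perfect complex $G$ with $\supp G=\overline{\{x\}}$ (here $G=k(x)$), and verifying that \emph{every} thick subcategory strictly containing $\cal P_x$ contains such a $G$ is the real content of \cite{Matsui_2021}*{Corollary 3.8}; your formula happens to work only at the generic point. This does not affect the truth of (ii), which you are in any case citing, but as written the justification would fail.
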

\begin{proof}
    Only for readers' convenience, we reproduce a proof of part (iv) although it is a mere specialization of the proof of Matsukawa \cite{matsukawa2025spectrum} to our setting. Let $X = \bigcup_{i \in I}U_i$ be an affine open cover and set $Z_i=X\setminus U_i$. Also, fix a split generator $\ecal E$ of $\perf X$.  By \cite{thomason1997classification}*{Lemma 3.4.} there is $\ecal F_i \in \perf X$ such that $\supp \ecal F_i = Z_i$. Since $\bra{\ecal F_i \tens_X^\bb L \ecal E} = \bra{\ecal F_i \tens_{X}^\bb L\perf (X)}\subset \perf X$ is a $\tens$-ideal and $$\ecal F_i \in \bra{\ecal F_i \tens_X^\bb L \ecal E}\subset \perf_{Z_i} X := \{\ecal G \in \perf X\mid \supp \ecal G\subset Z_i\}$$ we have $\bra{\ecal F_i\tens_X^\bb L \ecal E} = \perf _{Z_i} X$ by \cite{thomason1997classification}*{Theorem 3.15.}. Now, by \cite{matsui2023triangular}*{\href{https://arxiv.org/pdf/2301.03168}{Corollary 4.4.}}, the natural inclusion
    \[
    \spc_\vartriangle\perf X/\perf_{Z_i}X \inj \spc_\vartriangle\perf X
    \]
    is an open embedding, where we identify $\spc_\vartriangle\perf X/\perf_{Z_i}X$ with the subspace on prime thick subcategories of $\perf X$ containing $\perf_{Z_i}X$. Now, recall from \cite{Balmer_2002} that
    \[
    \spc_{\tens_X^\bb L}\perf X= \{S_X(x) \mid x \in X\},
    \]
    where $S_X(x) = \{\ecal F \in \perf X \mid F_x \iso 0 \text{ in $\perf \ecal O_{X,x}$}\}$ (and $x \in X$ is not necessarily a closed point). We claim 
    \[
    \spc_\vartriangle\perf X/\perf_{Z_i}X = \{S_X(x)\mid x \in U_i\}. 
    \]
    Indeed, it is clear $\perf_{Z_i}X \subset S_X(x)$ for $x \in U_i$, so we have $\{S_X(x)\mid x \in U_i\} \subset \spc_\vartriangle\perf X/\perf_{Z_i}X$. On the other hand, recall that an open immersion $j_i:U_i \inj X$ induces a fully faithful dense functor
    \[
    j_i^*: \perf X/\perf_{Z_i}X \inj \perf U_i 
    \]
    (cf. \cite{thomason2007higher} and \cite{Balmer_2002}*{\href{https://www.math.ucla.edu/~balmer/Pubfile/Reconstr.pdf}{Theorem 2.13.}}) and since $j_i^*S_X(x) = S_{U_i}(x)$ for $x \in U_i$, we have a commutative diagram:
    \begin{center}
        \DisableQuotes
        \begin{tikzcd}
\{S_X(x)\mid x \in U_i\} \arrow[d, "j_i^*"'] \arrow[r, hook] & \spc_\vartriangle\perf X/\perf_{Z_i}X \arrow[d, "j_i^*"] \\
\spc_{\tens_{U_i}^\bb L}\perf U_i \arrow[r, hook]            & \spc_\vartriangle\perf U _i                               
\end{tikzcd}
    \end{center}
    where vertical arrows are bijections by \cite{Matsui_2021}*{\href{https://arxiv.org/pdf/2102.11317}{Proposition 2.11.}}. Moreover, the bottom inclusion is the identity by \cite{Matsui_2021}*{\href{https://arxiv.org/pdf/2102.11317}{Corollary 4.9.}} (as any thick subcategory of $\perf U_i$ is a $\tens$-ideal) and therefore we see that the top inclusion is the identity as desired. Now, we have that
    \[
    \spc_{\tens_X^\bb L}\perf X = \bigcup_{i \in I} \spc_\vartriangle\perf X/\perf_{Z_i}X \subset \spc_\vartriangle \perf X
    \]
    is an open subspace and hence we are done by \cite{matsui2023triangular}*{\href{https://arxiv.org/pdf/2301.03168}{Theorem 4.6.}}. 
\end{proof}
Now, using the same notations as in Theorem \ref{tt and triangle}, we can interpret those results by the following slogans:
    \begin{itemize}
        \item Part (i) claims that a noetherian scheme can be viewed as a choice of a tt-structure on its perfect derived category.
        \item Part (ii) claims that topologically such a choice of a tt-structure may be viewed as a choice of a subspace in the Matsui spectrum.
        \item Part (iii) claims that the reduced structure sheaf on $|X|$ can be recovered as the structure sheaf $\ecal O_{\spc_\otimes \cal T, \vartriangle}$ of $\spc_\otimes \cal T$ in the sense of Construction \ref{constr: Matsui structure sheaf}, which only uses the triangulated category structure once we identify the subspace $\spc_\tens \cal T \subset \spc_\vartriangle \cal T$. 
        \item Part (iv) claims that the structure sheaf of $X_\sf{red}$ can be indeed recovered as the restriction of the structure sheaf $\ecal O_{\cal T, \vartriangle}$ of the entire Matsui spectrum $\spc_\vartriangle \cal T$. 
    \end{itemize}
An upshot is that understanding subspaces of the Matsui spectra provides a novel way to understand relations of derived categories and algebraic geometry. From the next section, we observe that the fixed points of an autoequivalence (or more generally an endofunctor) provides a way to specify such subspaces.

To finish our preparation, let us collect some technical results that refine the relation between prime $\tens$-ideals and prime thick subcategories for later use. 
\begin{theorem}[\cite{Matsui_2021}*{\href{https://arxiv.org/pdf/2102.11317.pdf}{Proposition 4.8}}, \cite{matsui2023triangular}*{\href{https://arxiv.org/pdf/2301.03168.pdf}{Corollary 3.2}, \href{https://arxiv.org/pdf/2301.03168.pdf}{Theorem 4.6}}]\label{thm: matsui prime thick ideal}
    Let $(\cal T, \tens)$ be a rigid tt-category. Recall by \cite{Balmer_2005}*{\href{https://www.math.ucla.edu/~balmer/Pubfile/Spectrum.pdf}{Proposition 4.4.}}, any $\tens$-ideal $\cal I$ in a rigid tt-category is radical (in the sense that if $X^{\tens n} \in \cal I$ for some $n\geq 1$, then $X \in \cal I$).
    \begin{enumerate}
        \item Any prime thick subcategory that is a $\tens$-ideal is a prime $\tens$-ideal.
        \item If $(\cal T,\tens)$ is moreover idempotent complete and locally monogenic (cf. \cite{matsui2023triangular}*{p.9} for definition) with noetherian Balmer spectrum, then any prime $\tens$-ideal is a prime thick subcategory, i.e., we have $\spc_\tens \cal T\subset \spc_\vartriangle \cal T$.
        \item For a noetherian scheme $X$, the tt-category $(\perf X,\tens_{\ecal O_X}^\bb L)$ is rigid, idempotent complete, and locally monogenic. In particular, a thick subcategory of $\perf X$ is a prime $\tens$-ideal if and only if it is a prime thick subcategory that is a $\tens$-ideal (which is also directly shown in \cite{Matsui_2021}*{\href{https://arxiv.org/pdf/2102.11317.pdf}{Corollary 4.9}}). \qedhere
    \end{enumerate}    
\end{theorem}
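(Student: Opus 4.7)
For part (i), I would fix a prime thick subcategory $\cal P$ that is also a $\tens$-ideal and verify the defining condition of a prime $\tens$-ideal directly. Suppose for contradiction that $M \tens N \in \cal P$ but $M, N \notin \cal P$. Form $\cal I = \bra{\cal P, M}$ and $\cal J = \bra{\cal P, N}$, both of which strictly contain $\cal P$; by the defining property of a prime thick subcategory both must then contain the minimum of $\{\cal K \in \Th \cal T \mid \cal K \supsetneq \cal P\}$, so $\cal I \cap \cal J \supsetneq \cal P$. A standard thick-closure argument shows $A \tens B \in \cal P$ for every $A \in \cal I$ and $B \in \cal J$: for fixed $B \in \cal J$, the collection $\{A' \in \cal T \mid A' \tens B \in \cal P\}$ is a thick subcategory containing $\cal P$ (by the ideal property) and, after an analogous reduction in the other variable using $M \tens N \in \cal P$, also $M$, so it contains $\cal I$. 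Choosing any $A = B \in (\cal I \cap \cal J) \setminus \cal P$ yields $A^{\tens 2} \in \cal P$, and the radicality of $\cal P$ recalled from Balmer's result forces $A \in \cal P$, a contradiction.

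For part (ii), given a prime $\tens$-ideal $\cal P$, the plan is to exhibit a minimum element of the poset $\{\cal K \in \Th \cal T \mid \cal K \supsetneq \cal P\}$. The strategy is to pass to the idempotent-complete Verdier quotient $\overline{\cal T} := (\cal T/\cal P)^\natural$, which inherits a rigid, idempotent complete, locally monogenic tt-structure in which the image of $\cal P$ is the zero $\tens$-ideal. It therefore suffices to produce a smallest non-zero thick subcategory of $\overline{\cal T}$ and pull it back along the quotient functor. Local monogenicity should guarantee that near $\cal P$ the category $\overline{\cal T}$ is generated as a thick subcategory by the unit, so that rigidity (dualizability) lets one extract from any non-zero object $A$ a non-zero morphism $\bb 1 \to A \tens A^\vee$, thereby producing a canonical non-zero "residue" object whose thick closure sits inside every non-zero thick subcategory of $\overline{\cal T}$.

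For part (iii), I would verify each of the three properties of $(\perf X, \tens_{\ecal O_X}^{\bb L})$ in turn: rigidity, since every perfect complex is dualizable with dual $R\ihom(-, \ecal O_X)$; idempotent completeness, because a direct summand of a perfect complex on a noetherian scheme is again perfect; and local monogenicity, because Zariski-localization at a point identifies a local model with $\perf \ecal O_{X,x}$, which is the thick closure of its unit. Combining (i) and (ii) then yields the final equivalence between prime $\tens$-ideals and prime thick subcategories that are $\tens$-ideals. The main obstacle is part (ii): the other two parts are either a formal manipulation of thick closures together with radicality, or the verification of standard structural properties, whereas part (ii) is the only step that genuinely uses all three hypotheses of rigidity, idempotent completeness and local monogenicity, and the non-trivial content is the explicit construction of the minimal over-thick subcategory from the local "residue" data at $\cal P$.
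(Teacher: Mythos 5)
First, note that the paper does not prove this statement at all: it is quoted verbatim from Matsui's work (\cite{Matsui_2021}, \cite{matsui2023triangular}) and used as a black box, so there is no in-paper proof to compare against. Judged on its own terms, your part (i) is a complete and correct argument, and it is essentially the standard one: the minimum of $\{\cal K \supsetneq \cal P\}$ forces $\cal I \cap \cal J \supsetneq \cal P$, the two thick-closure reductions give $\cal I \tens \cal J \subset \cal P$, and radicality (from rigidity) finishes. Part (iii) is also fine as a verification sketch, granting the definition of local monogenicity.

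Part (ii), however, has a genuine gap, and it is exactly the part you flag as the crux. Your plan reduces to producing a smallest non-zero thick subcategory of $\overline{\cal T} = (\cal T/\cal P)^\natural$, but the proposed mechanism --- ``rigidity lets one extract a non-zero morphism $\bb 1 \to A \tens A^\vee$, thereby producing a canonical non-zero residue object'' --- does not produce anything. The coevaluation $\bb 1 \to A \tens A^\vee$ exists for every dualizable $A$ and is non-zero when $A \neq 0$ (since $A$ is a retract of $A \tens A^\vee \tens A$), but this gives no canonical object independent of $A$, and no reason that some fixed thick subcategory lies inside $\bra{A}$ for \emph{every} non-zero $A$. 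In the model case $\overline{\cal T} \simeq \perf \ecal O_{X,x}$ the minimum is the subcategory of complexes supported at the closed point (generated by a Koszul complex), and its existence rests on the classification of thick subcategories by Thomason subsets together with the fact that the closed point is a Thomason subset of $\spec \ecal O_{X,x}$; none of that is supplied by your sketch, and it is precisely the content of the cited Theorem 4.6. A second, smaller issue: thick subcategories of $\cal T$ containing $\cal P$ biject with thick subcategories of $\cal T/\cal P$, but $(\cal T/\cal P)^\natural$ generally has strictly more thick subcategories than $\cal T/\cal P$ (Thomason's density theorem), so transporting a minimum across the idempotent completion requires an argument that the pullback $\cal S \mapsto q_{\cal P}^{-1}(\cal S)$ preserves and reflects strict containments over $\cal P$; you pass over this silently even though it is where the idempotent completeness hypothesis enters.
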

For convenience, let us use the following notion in this paper. 
\begin{definition}
    A tt-category $(\cal T,\tens)$ (or a tt-structure $\tens$) is said to be \textbf{$M$-compatible} ($M$ for Matsui) if we have that a thick subcategory of $\cal T$ is a prime $\tens$-ideal if and only it is a prime thick subcategory that is a $\tens$-ideal.  
\end{definition}
By Theorem \ref{thm: matsui prime thick ideal}, a rigid, idempotent complete and locally monogenic tt-category with noetherian Balmer spectrum is $M$-compatible. On the other hand, this is not a necessary condition.
\begin{example}\label{example: non rigid}
    For a finite acyclic quiver $\sf Q$, the tt-category $(\perf k\sf Q, \tens_{\sf rep}^\bb L)$ is in general not rigid, but since $\spec_{\tens_\sf Q}\perf k\sf Q$ consists of maximal thick subcategories (cf. \cite{SirLiu13}), we have $$\spec_{\tens_\sf{rep}^\bb L}\perf k\sf Q \subset \spec_\vartriangle \perf k\sf Q.$$ Moreover, it is easy to see that a thick subcategory is a prime $\tens$-ideal if and only if it is a prime thick subcategory that is a $\tens$-ideal. 
\end{example}

\section{Fixed points of autoequivalences and polarizations}
In this section, we introduce notions of polarizations on triangulated categories and their corresponding spectra in relation with tensor triangulated geometry although studies of polarizations turn out to be rich in its own right. To mimic the construction of tt-spectra, let us introduce our guiding example from algebraic geometry, which was essentially observed in \cites{HO22, matsui2023triangular} through some specific cases and made explicit in \cite{ito2024new}. First, let us recall the following basic tools:
\begin{definition}\label{def: split generator}
    Let $\cal T$ be a triangulated category. We say $E \in \cal T$ is a \textbf{split generator} if any object in $\cal T$ is the direct summand of an object obtained by taking iterative extensions of finite direct sums of shifts of $E$, which is equivalent to saying that the smallest thick subcategory containing $E$ is $\cal T$, i.e., we have $\bra{E} = \cal T$ (e.g., cf. \cite{stacks-project}*{\href{https://stacks.math.columbia.edu/tag/0ATG}{Tag 0ATG}}). Recall if $X$ is a quasi-compact and quasi-separated scheme, then $\perf X$ has a split generator (\cite{bondal2002generators}*{Corollary 3.1.2.}). 
\end{definition}
\begin{lemma}\label{lem: split gen}
    Let $(\cal T,\tens)$ be a tt-category with a split generator $E$. Then, a thick subcategory $\cal I \subset \cal T$ is a $\tens$-ideal if and only if $\cal I \tens E \subset \cal I$.
\end{lemma}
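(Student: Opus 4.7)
The forward direction is immediate from the definition of a $\tens$-ideal: taking $M=E$ in the defining property yields $\cal I\tens E \subset \cal I$. So the content of the lemma is in the converse, where I plan to argue that the hypothesis $\cal I \tens E \subset \cal I$ extends to all of $\cal T$ by thickness-of-the-tensoring-category type argument.

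The plan is to fix $\cal I$ with $\cal I \tens E \subset \cal I$, and consider the full subcategory
\[
\cal S := \{ M \in \cal T \mid M \tens N \in \cal I \text{ for all } N \in \cal I\}\subset \cal T.
\]
By assumption $E \in \cal S$, so if I can show $\cal S$ is a thick subcategory, then $\cal S \supset \bra E = \cal T$ by the split generation hypothesis, whence $\cal S = \cal T$ and $\cal I$ is a $\tens$-ideal.

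To verify that $\cal S$ is thick, I will use in turn: (a) closure under shifts, since $M[1] \tens N \iso (M \tens N)[1]$ and $\cal I$ is triangulated; (b) closure under direct sums, since $\tens$ distributes over $\oplus$ and $\cal I$ is closed under finite sums; (c) closure under cones/extensions, using that for a triangle $M_1 \to M_2 \to M_3 \to M_1[1]$ in $\cal T$, tensoring with a fixed $N \in \cal I$ gives a triangle (as $\tens$ is triangulated in the first variable), so if $M_1\tens N, M_3\tens N \in \cal I$ then $M_2\tens N \in \cal I$ by the two-out-of-three property; and (d) closure under direct summands, since $(M_1\oplus M_2)\tens N \iso (M_1\tens N) \oplus (M_2\tens N) \in \cal I$ forces each summand into $\cal I$ by thickness of $\cal I$.

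I do not anticipate any real obstacle; everything is formal, using only that $\tens$ is biexact (hence triangulated in each variable) and that thick subcategories are closed under shifts, cones, and direct summands. The only subtle point worth checking is that split generation by $E$ means exactly $\bra E = \cal T$ in the thick subcategory sense, which is explicitly recalled in the preceding definition, so the containment $\cal S \supset \bra E$ does give $\cal S = \cal T$.
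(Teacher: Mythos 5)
Your proposal is correct and follows essentially the same route as the paper: the paper also defines the auxiliary subcategory $\cal J = \{M \in \cal T \mid \cal I \tens M \subset \cal I\}$ (identical to your $\cal S$ by symmetry of $\tens$), notes it is thick and contains $E$, and concludes $\cal J = \cal T$. The only difference is that you spell out the thickness verification that the paper dismisses as clear, and your checks (a)--(d) are all valid.
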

\begin{proof}
    One direction is clear. Conversely, suppose $\cal I \tens E \subset \cal I$ for $\cal I \in \Th \cal T$. Let $\cal J = \{M \in \cal T\mid \cal I \tens M \subset \cal I\}$. Then $\cal J$ is clearly a thick subcategory of $\cal T$. Now, since $E \in \cal J$, we have $\cal J = \cal T$, i.e., $\cal I$ is a $\tens$-ideal. 
\end{proof}
Now, the following is our guiding example. 
\begin{prop}\label{example:guiding}
    Let $X$ be a quasi-projective scheme (over any field) with an (anti-)ample line bundle $\ecal L$ on $X$ and write $\tens := \tens_{\ecal O_X}^\bb L$. Recalling Notation \ref{notationa: prelim} (vi), (viii), we have
\[
\spc_\tens \perf X = \spc^{\tau_\ecal L}\perf X \subset \spc_\vartriangle \perf X. \qedhere
\]
\end{prop}
\begin{proof}
    Although this is essentially \cite{ito2024new}*{\href{https://arxiv.org/pdf/2405.16776}{Lemma 3.1}}, we sketch the proof for readers' convenience. First, by \cite{Orlov_dimension}*{\href{https://arxiv.org/pdf/0804.1163.pdf}{Theorem 4}}, we see that there exists a finite subset $S \subset \bb Z$ such that $\oplus_{n\in S}\ecal L^{\tens n}$ is a split generator of $\perf X$, where we write $\ecal L^{\tens 0}:=\ecal O_X$. Now, note that for a thick subcategory $\cal I \subset \perf X$, if we have $\cal I \tens \ecal L = \cal I$, then we have $\cal I \tens \l(\oplus_{n\in S}\ecal L^{\tens n} \r)\subset \cal I$. Therefore, by Lemma \ref{lem: split gen} and Theorem \ref{thm: matsui prime thick ideal} (iii), we have 
    \[
\spc_\tens \perf X = \spc^{\tau_\ecal L}\perf X \subset \spc_\vartriangle \perf X
\]
since the left hand side is the subspace of prime $\tens$-ideals and the right hand side is the subspace of prime thick subcategories that are $\tens$-ideals. 
\end{proof}
Let us list some takeaways from this observation.
\begin{itemize}
    \item To get the space of fixed points, we only need a single autoequivalence $\tau_\ecal L \in \auteq \perf X$, which does not require any other extra structures on the triangulated category $\perf X$ such as monoidal structures.
    \item When $X$ is reduced, we can reconstruct the structure sheaf of the Balmer spectrum on $\spc_\tens \perf X = \spc^{\tau_ \ecal L}\perf X$ as the restriction of the structure sheaf of the Matsui spectrum by Theorem \ref{tt and triangle} (iv). Namely, we do not need the data corresponding to the tensor unit $\ecal O_X$ to define the structure sheaf (cf. Construction \ref{construction: structure sheaf of tt-spectra}). 
    \item It is also worth emphasizing classifications of tt-structures are not yet well-formulated while classifications of autoequivalences just happen inside of $\auteq \perf X$, which is well-understood in various cases. In other words, we can hope for classifying geometry of
    \[
    \spc^{\tau} \perf X \subset \spc_\vartriangle \perf X
    \]
    when we vary $\tau \in \auteq \perf X$. 
\end{itemize}
In the rest of the section, let us first generalize this phenomena with presence of tt-structures to clarify which classes of tt-categories are in our scope and later we will recast this phenomena in the absence of tt-structures so that it is applicable to more general classes of triangulated categories not admitting natural (symmetric) monoidal structures. 

\subsection{Generalization with tt-structures and $\tens$-ample objects}
The following property extracts the essence of the proof of Proposition \ref{example:guiding}.
\begin{definition} Let $(\cal T, \tens)$ be a tt-category. An object $\ecal L \in \cal T$ is said to be \textbf{$\tens$-ample} if $\ecal L$ is $\tens$-invertible and if $\bra{\ecal L^{\tens n} \mid n \in \bb Z} = \cal T$. By slight abuse of notations, for a scheme $X$, a line bundle $\ecal L$ on $X$ is said to be \textbf{$\tens$-ample (on $X$)} if $\ecal L$ is $\tens$-ample in $(\perf X, \tens_{\ecal O_X}^\bb L)$.
\end{definition}
Indeed, by the almost identical arguments as in the proof of Proposition \ref{example:guiding}, we immediately obtain the following generalization. 
\begin{prop}\label{prop: main autoequiv} 
    Let $(\cal T,\tens)$ be an $M$-compatible tt-category with a $\tens$-ample object $\ecal L$. Then, we have
    \[
    \spc_\tens \cal T = \spc^{\tau_ \ecal L} \cal T \subset \spc_\vartriangle \cal T. 
    \]
    In particular, if $\ecal L$ is a $\tens$-ample line bundle on a noetherian scheme $X$, then we have
    \[
    \spc_{\tens_X^\bb L} \perf X = \spc^{\tau_\ecal L}\perf X \underset{\sf{open}}{\subset} \spc_\vartriangle \perf X.\qedhere
    \]
\end{prop} 
\begin{proof}
    The containment $\spc_\tens \cal T \subset \spc^{\tau_\ecal L}\cal T$ is clear. Conversely, take $\cal P \in \spc^{\tau_\ecal L}\cal T$ and consider a thick subcategory $\cal I:= \{M \in \cal T \mid \cal P \tens M \subset \cal P\} \subset \cal T$. Then, since $\cal P \tens \ecal L^{\tens n} \subset \cal P$ for any $n \in \bb Z$, we have $\ecal L^{\tens n} \in \cal I$ for any $n \in \bb Z$ and therefore $\cal I = \cal T$ as $\ecal L$ is $\tens$-ample. Thus, $\cal P$ is a prime thick subcategory that is a $\tens$-ideal, so it is a prime $\tens$-ideal by Theorem \ref{thm: matsui prime thick ideal}.
\end{proof}
Now, even in algebro-geometric situations, Proposition \ref{prop: main autoequiv} is a strict generalization of Proposition \ref{example:guiding}. In fact, it is known to experts that not all $\tens$-ample line bundles are (anti-)ample. For completeness, let us provide examples, which the author essentially learned from \cite{259385} although our proof slightly differs. 

\begin{prop}\label{lem: tens-ample but not (anti)ample, blow up}
            Let $Y$ be a smooth projective surface over an algebraically closed field and let $\pi:X \to Y$ be the blow-up at a point $p \in Y$. Write $\tens:=\tens_{\ecal O_X}^\bb L$. Let $\ecal O_Y(1)$ be a very ample line bundle on $Y$ and let $E \subset X$ denote the exceptional divisor. Then, we have that
            \[
            \ecal L:= \ecal O_X(E) \tens \pi^* \ecal O_Y(1)
            \]
            is neither ample nor anti-ample, but is $\tens$-ample. 
        \end{prop}
        \begin{proof}
            First of all, $\ecal L$ is neither ample nor anti-ample (not even nef nor anti-nef) since $c_1(\ecal L)\cdot E = -1$ while $c_1(\ecal L)\cdot [C] > 0$ for any curve $C$ with $C\cap E = \emp$.

            Now, let us show $\ecal L$ is $\tens$-ample. Let $\cal C$ denote $\bra{\ecal L^{\tens n}\mid n \in \bb Z} \subset \perf X$ and we want to show every tensor power of the ample line bundle $\ecal M:= \ecal O_X(-E) \tens \pi^* \ecal O_Y(2)$ (e.g., cf. \cite{Har77}*{Exercise V.3.3.}) is contained in $\cal C$. First, take a closed point $x \in X\setminus E$. Then, there exists a hyperplane section $H \subset Y$ with $\pi(x) \in H$ and $p \not \in H$, which gives a section of $\pi^* \ecal O_Y(1)$ that precisely vanishes over $\pi\inv(H)$. Therefore, by tensoring with a section of $\ecal O_X(E)$ corresponding to a short exact sequence $0 \to \ecal O_X(-E) \to \ecal O_X \to \ecal O_E \to 0$, we get a section of $\ecal L$ that precisely vanishes over $\pi\inv(H) \cup E$, which gives a short exact sequence
            \[
            0 \to \ecal L^{\tens(-1)} \to \ecal O_X \to \ecal O_{\pi\inv(H)}\oplus \ecal O_E \to 0.
            \]
            Note that this implies $$\ecal O_{\pi \inv(H)}, \ecal O_E \in \cal C.$$ Furthermore, for each $n \in \bb Z$, by tensoring $\ecal L^{\tens n}$ to the short exact sequence above, we see that
            \[
            \ecal O_{\pi\inv(H)} \tens \ecal L^{\tens n} =  \ecal O_{\pi\inv(H)} \tens \pi^* \ecal O_Y(n) \in \cal C \quad \text{and} \quad \ecal O_E \tens \ecal L^{\tens n} = \ecal O_E(nE) \in \cal C. 
            \]
            Therefore, for each $n\in \bb Z$ by tensoring $\ecal O_X(nE)$ to the short exact sequence $0 \to \ecal O_X(-E) \to \ecal O_X \to \ecal O_E \to 0$, we inductively see that
            \[
            \ecal O_X(nE) \in \cal C. 
            \]
            Now, for each $n \in \bb Z_{>0}$ consider a short exact sequence
            \[
            0 \to \ecal O_X(-3nE) \to \ecal O_X \to \ecal Q_n \to 0
            \]
            so that $\ecal Q_n \in \cal C$. Note by tensoring with $\ecal O_X(2nE)$, we see that $$\ecal Q_n\tens \ecal O_X(2nE) \in \cal C.$$ By further tensoring with $\pi^*\ecal O_Y(2n)$, we obtain the short exact sequence
            \[
            0 \to \ecal M^{\tens n} \to \ecal L^{\tens 2n} \to \ecal Q_n\tens \ecal O_X(2nE)\tens \pi^*\ecal O_Y(n) \to 0.
            \]
            Now, since $\supp \ecal Q_n = E$, we have 
            \[
            \ecal Q_n\tens \ecal O_X(2nE)\tens \pi^*\ecal O_Y(n) = \ecal Q_n\tens \ecal O_X(2nE) \in \cal C,
            \]
            so $\ecal M^{\tens n} \in \cal C$ for any $n \in \bb Z_{>0}$ as desired. 
        \end{proof}
As non-geometric examples, let us consider the following simple case. 
\begin{example} For a finite acyclic quiver $\sf Q$, since we have $$\pic(\perf k\sf Q,\tens_\sf{rep}^\bb L) = \{\bb 1\},$$ the unit $\bb 1$ is the only $\tens$-invertible object. Also, note $\bb 1$ is $\tens$-ample if and only if $\bb 1$ split-generates $\perf \sf Q$. Thus, when $\sf Q$ has more than one vertex, $(\perf k\sf Q,\tens_{\sf{rep}}^\bb L)$ does not have any $\tens$-ample object.  
\end{example}

\subsection{Generalization in the absence of tt-structures and polarizations}
In the previous subsection, the data of an autoequivalence coming from tensoring with $\tens$-ample objects played an important role in recovering tt-spectra. Based on this observation, let us consider an analogue of tt-spectra in the absence of tt-structures. 
\begin{definition}\label{definition: ppt-category}
    Let $\cal T$ be a triangulated category. 
    \begin{enumerate}   
        \item For an autoequivalence $\tau \in \auteq \cal T$, the pair $(\cal T, \tau)$ is said to be a \textbf{polarized triangulated category} (in short, a \textbf{pt-category}). In this context, an autoequivalence of $\cal T$ may be interchangeably referred to as a \textbf{polarization} on $\cal T$. For a pt-category $(\cal T,\tau)$, define its \textbf{pt-spectrum} to be the ringed space
        \[
        \spec^\tau \cal T = (\spc^\tau \cal T, \ecal O_{\spc^\tau \cal T,\vartriangle}). 
        \]
        If there is no confusion, let us write $\ecal O_\tau:= \ecal O_{\spc^\tau \cal T,\vartriangle}$. 
        \item Let $(\cal T_1, \tau_1)$ and $(\cal T_2, \tau_2)$ be pt-categories. A \textbf{pt-functor} (resp. \textbf{pt-equivalence}) is defined to be a triangulated functor (resp. {equivalence}) $F: \cal T_1 \to \cal T_2$ such that there exists a natural isomorphism $\tau_2\circ F \iso F \circ \tau_1$.
    \end{enumerate}
    As in the case of Matsui spectra (cf. \cite{matsui2023triangular}*{\href{https://arxiv.org/pdf/2301.03168}{Proposition 4.2}}), a pt-equivalence $F:(\cal T_1,\tau_1) \overset{\sim}{\to}(\cal T_2,\tau_2)$ induces an isomorphism of pt-spectra as ringed spaces by $F\inv:\spec^{\tau_2} \cal T_2 \ni \cal P \mapsto F\inv(\cal P) \in \spec^{\tau_1} \cal T_1$. 
\end{definition} 
\begin{remark}\label{rem: not functorial}
    One immediate question is whether or not the association of pt-categories with pt-functors to pt-spectra as above is functorial. Unfortunately, it is not even well-defined on morphisms for the similar reason that the maximal ideals of a ring are not necessarily pulled back to maximal ideals by ring homomorphisms. Nevertheless, there is a way to construct a functorial theory, which we will discuss in Appendix \ref{append}.
\end{remark}
Generalizations to $\tens$-ample objects in the previous subsection revealed that there is subtlety among (anti)ample line bundles and $\tens$-ample line bundles, but further generalizations to pt-categories reveal there are even more subtleties for such properties of line bundles. We begin with a simpler example of $\bb P^1$ and then move on to the case of elliptic curves where we can already observe such complexity, namely, there is a line bundle that is not $\tens$-ample, but whose pt-spectrum still agrees with tt-spectrum at all $k$-points.
\begin{example}\label{example: preoplarization in perf P1}
     Let $X$ be a smooth projective variety with ample (anti-)canonical bundle. Then, by \cite{bondal_orlov_2001}, polarizations on $\perf X$ are classified by 
\[
\auteq \perf X = \aut X \ltimes \pic X \times \bb Z[1]. 
\]
In particular, if $X = \bb P^1$, then $\auteq \perf \bb P^1$ is a group generated by $\aut \bb P^1 \iso \operatorname{PGL}(2)$, $\pic(X) = \bb Z$ and shifts $\bb Z[1]$. Also, note that by \cite{Matsui_2021}*{\href{https://arxiv.org/pdf/2102.11317.pdf}{Example 4.10}}, we have
\[
\spc_\vartriangle \perf \bb P^1 = \bb P^1 \sqcup \bb Z,
\]
where $\bb Z$ is equipped with discrete topology and each point corresponds to the thick subcategory generated by $\ecal O(i)$ for $i \in \bb Z$. 
It is straightforward to observe the following:
\begin{enumerate}
    \item For $f_* \in \aut \bb P^1$, we have 
    \[
    \spc^{f_*} \perf \bb P^1 = \{\text{fixed points of $f:\bb P^1 \to \bb P^1$}\} \sqcup \bb Z \subset \bb P^1 \sqcup \bb Z.
    \] 
    \item For $\tau_ {\ecal O_{\bb P^1}(i) }\in \pic \bb P^1$ with $i\neq 0$, we have
    \[
    \spc^{\tau_{\ecal O(i)}} \perf \bb P^1 = \bb P^1 \subset \bb P^1 \sqcup \bb Z.
    \]
    \item For $[n] \in \bb Z$, we have
    \[
    \spc^{[n]}\perf \bb P^1 = \bb P^1 \sqcup \bb Z. 
    \]
\end{enumerate}
Therefore, for a general autoequivalence $\tau = \ecal O_{\bb P^1}(i) \tens_{\ecal O_\bb P^1} f_*(-)[n]$, we have 
\[
\spc^\tau \perf \bb P^1 = \begin{cases}
    \{\text{fixed points of $f:\bb P^1 \to \bb P^1$}\} \sqcup \bb Z  & \text{if $i = 0$;}\\ 
    \{\text{fixed points of $f:\bb P^1 \to \bb P^1$}\} & \text{if $i \neq 0$.}
\end{cases}
\]
Thus, we have a complete classification of polarizations on $\perf \bb P^1$ up to the geometry of its pt-spectrum. On the other hand, to the author's knowledge, it is still not known if we have a complete classification of tt-structures on $\perf \bb P^1$ up to tt-geometry. See \cite{castro2023spaces}*{\href{https://hal.science/tel-04018741v2/document}{Proposition 3.0.19}} and \cite{ito2023gluing}*{\href{https://arxiv.org/pdf/2309.08147.pdf}{Example 7.7 (iii)}} for some discussions. Also, note that not all tt-spectra are realized as fixed points of autoequivalences since the tt-spectrum for the quiver tt-structure on the Kronecker quiver consists of two points in $\bb Z$. We do not know if the converse is also false. That is, we do not know if given a polarization on $\perf X$, there exists a tt-structure on $\perf \bb P^1$ whose tt-spectrum agrees with the pt-spectrum.  
\end{example}
\begin{example}\label{example: preoplarization in perf E} Let us observe geometry of pt-spectra in the case of an elliptic curve $E$ over an algebraically closed field of characteristic zero. We will classify pt-spectra for generators and observe that a line bundle is $\tens$-ample if and only if the corresponding pt-spectrum agrees with the tt-spectrum (see the third bullet). On the other hand, there is a non-$\tens$-ample line bundle whose pt-spectrum agrees with the Balmer spectrum on all $k$-points. 

First, it is well-known that there is a short exact sequence
\[
1 \to \aut E \ltimes \pic^0 E \times \bb Z[2] \to \auteq \perf E \overset{\theta}{\to} \sl(2,\bb Z) \to 1
\]
where $\theta$ is given by the action on the image of the charge map $Z:K(E) \surj \bb Z^2,\ecal F \mapsto (\rank \ecal F,\deg \ecal F)$. Also recall from \cite{HO22}*{\href{https://arxiv.org/pdf/2112.13486}{Theorem 4.11}} that we have
\[
\spec_\vartriangle \perf E = \spec_{\tens_{\ecal O_E^\bb L}} \perf E \sqcup \bigsqcup_{(r,d) \in I} M_{r,d},
\]
where $I = \{(r,d) \in \bb Z_{>0} \times \bb Z \mid \gcd (r,d) = 1\}$ and 
\[
M_{r,d}:= \Phi_{\ecal U_{r,d}}\l(\spec_{\tens_{\ecal O_{M(r,d)}^\bb L}} \perf M(r,d)\r) 
\]
for the moduli space $M(r,d)$ of stable sheaves of rank $r$ and degree $d$ and the Fourier--Mukai transform $\Phi_{\ecal U_{r,d}}$ with respect to a universal family $\ecal U_{r,d}$. For convenience, let us write the subspace $$M_{0,1}:=  \spec_{\tens_{\ecal O_E^\bb L}} \perf E \subset \spec_\vartriangle \perf E.$$ It is easy to check
\[
M_{r,d}(k) = \{\cal S_{r,d}(\ecal F) \mid [\ecal F] \in M(r,d)(k)\},
\]
where we define 
\[
\cal S_{r,d}(\ecal F):= \bra{\ecal G \mid [\ecal G] \in M(r,d)(k), \ecal G \not \iso \ecal F} \subset \perf E. 
\]
Note the generic point of $M_{r,d}$ is given by $$\eta_{r,d}:=\bra{\ecal G \mid [\ecal G] \in M(r,d)(k)} \subset \perf E.$$ Also, recall that any object in $\perf E$ can be written as a direct sum of shifts of indecomposable coherent sheaves on $E$ and hence that any autoequivalence of $\perf E$ takes an indecomposable coherent sheaf to a shift of an indecomposable coherent sheaf. Hence, as a very rough estimate, for an autoequivalence $\tau \in \auteq \perf E$, we have
\[
\spc^\tau \perf E \subset \bigsqcup_{(r,d) \in I^\tau} M_{r,d},
\]
where $I^\tau := \{(r,d) \in I \cup \{(0,1)\}\mid \theta(\tau)(r,d) = \pm(r,d)\}$, noting that for a coherent sheaf $\ecal F$ on $E$, we have $(\rank \ecal F[n], \deg \ecal F[n]) = (-1)^n(\rank \ecal F, \deg \ecal F)$.

Now, let us consider the pt-spectrum corresponding to generators of $\auteq \perf E$. 

\begin{itemize}
    \item 

To get more explicit descriptions of generators of $\sl(2,\bb Z)$, note that $\theta$ maps $T_{k(x)} (= \tau_{\ecal O_E(x)})$ and $T_{\ecal O_E}$ to $\alpha:=\begin{pmatrix}
    1 & 0 \\ 1 & 1
\end{pmatrix}$ and $\beta:=\begin{pmatrix}
    1 & -1 \\ 0 & 1
\end{pmatrix}$, respectively, where $x$ is a closed point of $E$ and $T_\ecal E$ denotes the spherical twist with respect to an spherical object $\ecal E$. To visualize the corresponding pt-spectra, note that $T_{k(x)}$ maps the component $M_{r,d}$ to $M_{r,r+d}$ while $T_{\ecal O_E}$ maps the component $M_{r,d}$ to $M_{r-d,d}$ if $r-d \geq 0$ and to $M_{-r+d,- d}$ otherwise. Thus, we have
\[
\spc^{T_{k(x)}} \perf E = M_{0,1} 
\]
and 
\[
\spc^{T_{\ecal O_E}} \perf E = \bigsqcup_{r \in \bb Z_{>0}} M_{r,0}. 
\]
For the latter equality, we need to show that for any stable sheaf $\ecal F$ of rank $r$ and degree $0$, we have $$T_{\ecal O_E} \cal S_{r,0}(\ecal F)= \cal S_{r,0}(\ecal F).$$ To see this, note that we have 
\[
T_{\ecal O_E} \ecal F \iso \operatorname{cone}( (\h^0(E, \ecal F)\oplus \h^1(E,\ecal F)[-1])\otimes_\bb C \ecal O_E \overset{\sf{ev}}{\to} \ecal F)
\]
(e.g., cf. \cite{HuyBook}*{}) and there exists a unique stable sheaf $\ecal F_{r,0}$ of rank $r$ and degree $0$ with global sections (up to isomorphism) by \cite{atiyah1957vector}*{\href{https://math.berkeley.edu/~nadler/atiyah.classification.pdf}{Theorem 5}}. Therefore, for any stable sheaf $\ecal F \not \iso \ecal F_{r,0}$ of rank $r$ and degree $0$, we have $\h^0(E,\ecal F) = \h^1(E, \ecal F) = 0$ and hence $T_{\ecal O_E} \ecal F = \ecal F$, i.e., $T_{\ecal O_E} \cal S_{r,0}(\ecal F)= \cal S_{r,0}(\ecal F)$. Since $T_{\ecal O_E}$ induces an automorphism on the component $M_{r,0}$, we see that $T_{\ecal O_E} \cal S_{r,0}(\ecal F_{r,0})= \cal S_{r,0}(\ecal F_{r,0})$ as well. 

\item Consider $\tau = f_*$ for $f \in \aut E$. By definition we have
\[
\spc^\tau \perf E = \{\text{fixed points of $f$}\} \sqcup \bigsqcup_{(r,d) \in I}\l\{\cal S_{r,d}(\ecal F) \mid \ecal F \iso f_* \ecal F\r\} \cup \{\eta_{r,d}\}.
\]
Note a special case is when $f$ is a translation, in which case
\[
\spc^\tau \perf E= \bigsqcup _{r\in \bb Z_{>0}}M_{r,0}\sqcup \bigsqcup_{(r,d) \in I, d\neq 0} \{\eta_{r,d}\}.
\]
since the only homogeneous stable bundles are ones of degree $0$ by Atiyah's classification. Here, it is worth noting that the pt-spectra for $T_{\ecal O_E}$ and translations are the same while their images under $\theta$ are different. 

\item Consider $\tau = \tau_\ecal L (:=-\tens_{\ecal O_E}\ecal L)$ for $\ecal L \in \pic^0 E$. By definition we have 
\[
\spc^\tau \perf E = M_{0,1} \sqcup \bigsqcup_{(r,d) \in I} \{\cal S_{r,d}(\ecal F) \mid \ecal F \iso \ecal F \tens \ecal L\}\cup \{\eta_{r,d}\}. 
\]
By \cite{atiyah1957vector}*{\href{https://math.berkeley.edu/~nadler/atiyah.classification.pdf}{Corollary of Theorem 7}}, we see that:
\begin{itemize}
    \item if $\ecal L^{\tens r} \iso \ecal O_E$, then $\bigsqcup_{d \in \bb Z} M_{r,d} \subset \spc^\tau \perf E$ and
    \item if $\ecal L^{\tens r} \not \iso \ecal O_E$ for all $r>0$, then 
    \[
    \spc^\tau \perf E = M_{1,0}\sqcup \bigsqcup_{(r,d) \in I} \{\eta_{r,d}\}
    \]
\end{itemize}
In particular, a line bundle is $\tens$-ample if and only if the corresponding pt-spectrum agrees with the tt-spectrum. On the other hand, the second example is quite close to reconstruct the Balmer spectrum in the sense that the $k$-points of its pt-spectrum agree with those of the Balmer spectrum. In particular, the Balmer spectrum is realized as a unique maximal dimensional connected component of $\spc^\tau \perf E$. 
\end{itemize}
So far, we have observed the pt-spectra for the generators of $\auteq \perf E$, so for fun, let us consider some explicit examples:
\begin{itemize}
\item Set \[
\tau = \l(T_{\ecal O_E}T_{k(x)}\r)^3 = i^*[1],
\]
where $i$ denotes the inverse map $E \to E$ with respect to the identity element $e$ (cf. e.g., \cite{SeiTho01}). Clearly, we have 
\[
M_{0,1} \cap \spc^\tau \perf E = E_2, 
\]
where $E_2$ denotes the set of $2$-torsion points of $E$ (with respect to the fixed point $e$). To understand the action of $i^*$ on the moduli spaces, let us recall we have an isomorphism
\[
\det: M_{r,d} \overset{\sim}{\to} M_{1,d} = \pic^d E 
\]
so that $i^*\det \ecal F \iso \det i^* \ecal F$ for any $\ecal F \in M_{r,d}(k)$. Now, for a line bundle $\ecal L = \ecal O_E((d-1)e + p)$ of degree $d$, note that $i^*\ecal L \iso \ecal L$ if and only if $p \in E_2$. Therefore, setting $$\ecal F_{r,d}(p):=\det{}\inv(\ecal O_E((d-1)e+p)),$$ we see that 
\[
\spc^\tau \perf E = E_2 \sqcup \bigsqcup_{(r,d)\in I} \bigsqcup_{p \in E_2} \{\ecal F_{r,d}(p)\}, 
\]
where the generic points of all the connected components are included. 
\item Set
\[
\tau = \Phi_{\ecal U_{1,0}} \circ f_{{1,0}_*},
\]
where $f_{1,0}:E \to M_{1,0} (= \pic^0(E))$ is an isomorphism given by the fixed point $e$. It is easy to check $\theta(\tau) = \begin{pmatrix}
    0 & 1 \\ -1 & 0
\end{pmatrix}$ and hence we see
\[
\spc^\tau \perf E = \emp. \qedhere
\]
\end{itemize}
\end{example}
\subsection{Variants of polarizations}
Now, before considering reinterpretations of reconstructions, let us introduce the following natural generalizations of pt-categories, which will be used to deal with reconstructions of not necessarily quasi-projective schemes. 
\begin{definition}\label{def: weak ppt}
Let $\cal T$ be a triangulated category. 
    \begin{enumerate}
        \item For an endofunctor $\tau \in \End \cal T$, a pair $(\cal T, \tau)$ is said to be a \textbf{weakly pt-category}. In this context, an endofunctor $\tau$ may be interchangeably referred to as a \textbf{weak polarization}. We define the corresponding spectrum to be
        \[
        \spec^\tau \cal T : = (\spc^\tau \cal T, \ecal O_{\spc^\tau \cal T,\vartriangle}). 
        \]
        \item For a collection $\tau = \{\tau_i\}_{i \in I}$ of polarizations on $\cal T$, a pair $(\cal T,\tau)$ is said to be a \textbf{multi-pt-category} and the collection $\tau$ is said to be a \textbf{multi-polarization}. 
        We define the corresponding spectrum to be
        \[
        \spec^\tau \cal T : = (\spc^\tau \cal T, \ecal O_{\spc^\tau \cal T,\vartriangle}) 
        \]
        where we set
        \[
        \spc^\tau \cal T:= \bigcap_{i \in I} \spc^{\tau_i}\cal T. \qedhere
        \]
    \end{enumerate}
\end{definition}
To begin with, we have the following generalization of our guiding example (Proposition \ref{example:guiding}) for weakly pt-categories.
\begin{lemma}\label{lem: generalization to noetherian by wppt-cat}
        Let $X$ be a noetherian scheme and $\ecal E$ a split generator of $\perf X$. Write $\tens:= \tens_{\ecal O_X}^\bb L$ and set $\tau_{\ecal E} := -\tens \ecal E \in \End \perf X$. Then, we have
        \[
    |X| \iso \spc_{\tens} \perf X = \spc^{\tau_ \ecal E} \perf X \subset \spc_\vartriangle \perf X. 
    \]
    In particular, we have $X_\sf{red} \iso \spec^{\tau_\ecal E}\perf X$. 
\end{lemma}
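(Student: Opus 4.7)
The plan is to reduce the lemma to the same style of argument as Proposition \ref{example:guiding}/Proposition \ref{prop: main autoequiv}, but with the split generator $\ecal E$ playing the role of the $\tens$-ample line bundle. The isomorphism $|X|\iso \spc_\tens\perf X$ and the identification of the structure sheaf so that $(\spc_\tens\perf X,\ecal O_{\spc_\tens\perf X,\vartriangle})\iso X_\sf{red}$ are given respectively by Theorem \ref{tt and triangle}(i) and (iii), so the content is the middle equality $\spc_\tens\perf X=\spc^{\tau_\ecal E}\perf X$. Since both sides are subspaces of $\spc_\vartriangle\perf X$ equipped with the subspace topology (and, on the left, this agrees with Balmer's topology by Theorem \ref{tt and triangle}), a set-theoretic equality suffices; the "in particular" statement then drops out, because the structure sheaf on $\spec^{\tau_\ecal E}\perf X$ is by definition $\ecal O_{\spc^{\tau_\ecal E}\perf X,\vartriangle}$, which coincides with $\ecal O_{\spc_\tens\perf X,\vartriangle}$ as soon as the underlying subspaces agree.

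For the key set-theoretic equality, I would use $M$-compatibility of $(\perf X,\tens)$ from Theorem \ref{thm: matsui prime thick ideal}(iii): $\spc_\tens\perf X$ is exactly the set of prime thick subcategories of $\perf X$ that are $\tens$-ideals. Unwinding Notation \ref{notationa: prelim}(viii), $\cal P\in\spc^{\tau_\ecal E}\perf X$ means $\cal P\in\spc_\vartriangle\perf X$ and $\cal P=\tau_\ecal E\inv(\cal P)=\{M\in\perf X\mid M\tens\ecal E\in\cal P\}$. For $(\supset)$: if $\cal P$ is a prime thick subcategory satisfying $\cal P=\tau_\ecal E\inv(\cal P)$, then in particular $\cal P\tens\ecal E\subset\cal P$, and since $\ecal E$ split-generates $\perf X$, Lemma \ref{lem: split gen} shows $\cal P$ is a $\tens$-ideal, hence a prime $\tens$-ideal by $M$-compatibility.

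For $(\subset)$: if $\cal P$ is a prime $\tens$-ideal, then $M\in\cal P$ forces $M\tens\ecal E\in\cal P$, giving $\cal P\subset\tau_\ecal E\inv(\cal P)$. Conversely, if $M\tens\ecal E\in\cal P$, primeness yields $M\in\cal P$ or $\ecal E\in\cal P$; the latter is impossible because it would force $\cal P\supset\bra{\ecal E}=\perf X$, whereas $\cal P$ is a proper thick subcategory (by the very definition of prime, the subposet of strict overcategories is nonempty). Hence $\tau_\ecal E\inv(\cal P)\subset\cal P$, giving the reverse inclusion.

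The argument is mostly bookkeeping given the tools already collected in the excerpt, so I do not anticipate a real obstacle; the only subtle point is the last one, ensuring $\ecal E\notin\cal P$ in the $(\subset)$ direction, which is handled by recalling that a prime thick subcategory is proper and that a split generator generates the whole category as a thick subcategory.
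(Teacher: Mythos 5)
Your proposal is correct and follows essentially the same route as the paper: both directions use Lemma \ref{lem: split gen} plus the $M$-compatibility of $(\perf X,\tens_{\ecal O_X}^{\bb L})$ from Theorem \ref{thm: matsui prime thick ideal}(iii), and the reverse inclusion hinges on the observation that a prime $\tens$-ideal is proper and hence cannot contain the split generator $\ecal E$. The final identification $X_{\sf{red}}\iso\spec^{\tau_{\ecal E}}\perf X$ via Theorem \ref{tt and triangle}(iii) is also exactly what the paper does.
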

    \begin{proof}
        First, take $\cal P \in \spc^{\tau_ \ecal E} \perf X \subset \spc_\vartriangle \perf X$. Then, since
        \[
        (\tau_ \ecal E)\inv(\cal P)= \{\ecal F \in  \perf X \mid \ecal F \tens \ecal E \in \cal P\}, 
        \]
        we have
        \[
        \cal P =  \{\ecal F \in  \perf X \mid \ecal F \tens \ecal E \in \cal P\}
        \]
        and hence for any $\ecal F \in \cal P$, we have $\ecal F \tens \ecal E \in \cal P$, i.e., $\cal P$ is a $\tens$-ideal by Lemma \ref{lem: split gen} as $\ecal E$ is a split generator. Since $\cal P$ is moreover a prime thick subcategory,  we have $\cal P \in \spc_\tens \perf X$ by Theorem \ref{thm: matsui prime thick ideal}.

        Conversely, take $\cal P \in \spc_\tens \perf X$. Since it is in particular a $\tens$-ideal, we have 
        \[
        \cal P \subset \{\ecal F \in  \perf X \mid \ecal F \tens \ecal E \in \cal P\}.
        \]
        On the other hand, since $\cal P$ is a prime $\tens$-ideal and it cannot contain a split generator (so $\ecal F \tens \ecal E \in \cal P$ implies $\ecal F \in \cal P$), we have 
        \[
        \cal P \supset \{\ecal F \in  \perf X \mid \ecal F \tens \ecal E \in \cal P\}.
        \]
        Therefore, 
        \[
        \cal P = \{\ecal F \in  \perf X \mid \ecal F \tens \ecal E \in \cal P\} = (\tau_ \ecal E)\inv(\cal P),
        \]
        i.e., $\cal P \in \spc^{\tau_ \ecal E}\perf X$. The latter claim follows from Theorem \ref{tt and triangle} (iii).
    \end{proof}
    By the same proof as above, we can generalize Proposition \ref{prop: main autoequiv} to the following claim:
    \begin{prop}\label{prop: tt as weak pt}
        Let $(\cal T, \tens)$ be an $M$-compatible tt-category with a split generator $\ecal E$ of $\cal T$. Then, we have
    \[
    \spc_\tens \cal T = \spc^{\tau_ \ecal E} \cal T \subset \spc_\vartriangle \cal T. \qedhere
    \]
    \end{prop}
    \begin{remark}\label{moduli}
        This proof tells us that the classifications of certain tt-structures on a fixed triangulated category $\cal T$ with a split generator up to Balmer spectrum is a part of classifications of weak polarizations, i.e., endofunctors up to weak pt-spectrum. To be more precise, recall in \cite{ito2023gluing}, it is defined that two tt-structures are \textbf{geometrically equivalent} if their Blamer spectra are contained in the Matsui spectrum and are the same subspace of the Matsui spectrum. Let us say two (weak) polarizations are \textbf{geometrically equivalent} if their (weak) pt-spectra are the same subspace of the Matsui spectrum. Then Proposition \ref{prop: tt as weak pt} shows that we can view the set $M^\sf{tt}_{\cal T}$ of geometric equivalence classes of tt-structures whose Balmer spectrum is contained in the Matsui spectrum is a subset of the set $M^\sf{wpt}_{\cal T}$ of geometric equivalence classes of weak polarizations by the following map:
        \[
        M_{\cal T}^\sf{tt} \to M_{\cal T}^\sf{wpt}; \quad \tens \mapsto -\tens\ecal E,
        \]
        which does not depend on the choice of a split generator. Moreover, Proposition \ref{example:guiding} tells us that for a quasi-projective variety $X$, the locus $M^\sf{tt}_{\perf X} \cap M^\sf{pt}_{\perf X}$ contains the tt-structure $\tens_{\ecal O_X}^\bb L$, so a better understanding of $M^\sf{tt}_{\perf X} \cap M^\sf{pt}_{\perf X}$ can lead to the full reconstruction of all Fourier--Mukai partners from $\perf X$, where $M^\sf{pt}_{\perf X}$ denote the set of geometric equivalence classes of polarizations.
    \end{remark}
    Now, let us use multi-polarizations to give another generalization of Proposition \ref{example:guiding}. First, let us recall the following notion:
    \begin{definition}
        Let $X$ be a quasi-compact quasi-separated scheme and $\{\ecal L_i\}_{i \in I}$ a collection of line bundles on $X$. Write $\tens:= \tens_{\ecal O_X}^\bb L$. We say $\{\ecal L_i\}_{i \in I}$ is an \textbf{ample family} if the following equivalent conditions hold (cf. \cite{sga6}*{6.II.2.2.3.}):
        \begin{enumerate}
            \item For any quasi-coherent $\ecal O_X$-module $\ecal F$ of finite type, there exist integers $n_i, k_i > 0$ such that $\ecal F$ is a quotient of $\oplus_{i \in I}(\ecal L_i^{\tens-n_i})^{\oplus k_i}$;
            \item There is a family of sections $f \in \Gamma (X,\ecal L_i^{\tens n})$ such that $X_f$ forms an affine open cover of $X$, where $X_f$ denotes the complement of the vanishing locus of $f$. 
        \end{enumerate}
        In condition (i), we can always take such a direct sum to be finite. If a quasi-compact and quasi-separated scheme $X$ admits an ample family, then $X$ is said to be \textbf{divisorial}. 
    \end{definition}
    \begin{prop}\label{prop: amply family genereated perf}
        Let $X$ be a divisorial scheme and let $\{\ecal L_i\}_{i\in I}$ be an ample family. Then, we have
        \[
        \bra{\ecal L^{\tens d} \mid d \in \bb Z^{\oplus I}} = \perf X. 
        \]
        where we are using multi-grading notations $d = (d_i)_{i \in I} \in \bb Z^{\oplus I}$ and $\ecal L^{\tens d} = \bigotimes_{i \in \I} \ecal L^{\tens d_i}$. 
    \end{prop}
    \begin{proof}
        The proof is essentially the same as \cite{Orlov_dimension}*{\href{https://arxiv.org/pdf/0804.1163}{Theorem 4}}, but let us provide a proof for readers' convenience. First, we take a locally free sheaf $\ecal F$ of finite rank on $X$ and we want to show $\ecal F \in \bra{\ecal L^{\tens d} \mid d \in I}$. By the definition of an ample family, there exists a bounded above complex $\ecal P$ each of whose term is a finite direct sums of line bundles $\ecal L_i^{\tens k}$ together with a quasi-isomorphism $\ecal P \to \ecal F$. Consider the brutal truncation $\sigma^{\geq -(m+1)} \ecal P$ for $m = \dim X$. Since the cone of the map $\sigma^{\geq -(m+1)} \ecal P \to \ecal F$ is isomorphic to $\ecal G[m+1]$ for some locally free sheaf $\ecal G$ and we have $\hom(\ecal F, \ecal G[m+1]) = \h^{m+1}(X, \ecal F^\vee \tens_{\ecal O_X} \ecal G) = 0$, we see that $\ecal F$ is a direct summand of $\sigma^{\geq -(m+1)} \ecal P$, i.e., $\ecal F \in \bra{\ecal L^{\tens d} \mid d \in I}$. Now, since any perfect complex over a divisorial scheme is quasi-isomorphic to a bounded complex of vector bundles (e.g., cf. \cite{thomason2007higher}*{\href{https://www.maths.ed.ac.uk/~v1ranick/papers/tt.pdf}{2.3.1.d}}), we see $\bra{\ecal L^{\tens d} \mid d \in I} = \perf X$. 
    \end{proof} 
    As a direct corollary, we obtain the following:
    \begin{theorem}\label{thm: divisorial reconstruction}
        Let $X$ be a noetherian divisorial scheme and take an ample family $\{\ecal L_i\}_{i \in I}$. Set $\tens:=\tens_{\ecal O_X}^\bb L$ and take the corresponding multi-polarization $\tau := \{\tau_{\ecal L_i}\}_{i \in I}$. Then, we have
        \[
        \spc_\tens \perf X = \spc^\tau \perf X \subset \spc_\vartriangle \perf X.
        \]
        In particular, we have $X_\sf{red} \iso \spec^\tau \cal T$. 
    \end{theorem}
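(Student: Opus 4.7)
The plan is to mimic the proofs of Lemma \ref{lem: generalization to noetherian by wppt-cat} and Proposition \ref{prop: main autoequiv}, replacing the role of a single split generator by the family of twists $\{\ecal L^{\tens d}\}_{d \in \bb Z^{\oplus I}}$. The two key inputs are Proposition \ref{prop: amply family genereated perf}, which ensures $\bra{\ecal L^{\tens d}\mid d \in \bb Z^{\oplus I}} = \perf X$, and Theorem \ref{thm: matsui prime thick ideal} (iii), which tells us that $(\perf X, \tens)$ is $M$-compatible so that prime thick subcategories which are $\tens$-ideals coincide with prime $\tens$-ideals. Once the equality of underlying subspaces is established, the ringed space identification $X_\sf{red} \iso \spec^\tau \perf X$ will follow immediately from Theorem \ref{tt and triangle} (iii), since the structure sheaf on $\spc^\tau \perf X$ is by definition the restriction of $\ecal O_{\cal T,\vartriangle}$.

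For the forward inclusion $\spc_\tens \perf X \subset \spc^\tau \perf X$, I would take $\cal P \in \spc_\tens \perf X$ and simply observe that because $\cal P$ is a $\tens$-ideal and each $\ecal L_i$ is $\tens$-invertible, we have $\cal P \tens \ecal L_i \subset \cal P$ and $\cal P \tens \ecal L_i\inv \subset \cal P$, so $\tau_{\ecal L_i}\inv(\cal P) = \cal P$ for every $i \in I$, i.e. $\cal P \in \spc^{\tau_{\ecal L_i}} \perf X$ for all $i$.

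For the reverse inclusion, I would take a prime thick subcategory $\cal P$ with $\tau_{\ecal L_i}\inv(\cal P) = \cal P$ for all $i \in I$, and form
\[
\cal J := \{\ecal F \in \perf X \mid \ecal F \tens \cal P \subset \cal P\}.
\]
A short check shows that $\cal J$ is a thick subcategory of $\perf X$ which is moreover closed under tensor products of its own members (if $\ecal F, \ecal G \in \cal J$, then $(\ecal F\tens\ecal G)\tens\cal P \subset \ecal F\tens\cal P \subset \cal P$). By hypothesis every $\ecal L_i$ and every $\ecal L_i\inv$ lies in $\cal J$, so $\ecal L^{\tens d} \in \cal J$ for every $d \in \bb Z^{\oplus I}$. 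Proposition \ref{prop: amply family genereated perf} then forces $\cal J = \perf X$, which means $\cal P$ is a $\tens$-ideal. Being a prime thick subcategory that is a $\tens$-ideal, $M$-compatibility gives $\cal P \in \spc_\tens \perf X$.

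The argument is essentially a direct corollary of Proposition \ref{prop: amply family genereated perf}, so there is no real obstacle; the only bookkeeping point to handle carefully is verifying that the multiplier-type category $\cal J$ is simultaneously thick and closed under tensor products, which is what allows the generation statement of Proposition \ref{prop: amply family genereated perf} (formulated as a thick-subcategory generation, not a tensor-thick one) to be applied in the form we need. Finally, combining the equality $\spc_\tens \perf X = \spc^\tau \perf X$ with Theorem \ref{tt and triangle} (iii) identifies $\spec^\tau \perf X$ with $X_\sf{red}$ as ringed spaces.
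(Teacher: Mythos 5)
Your proposal is correct and follows essentially the same route as the paper: both inclusions are handled exactly as in the paper's proof, with Proposition \ref{prop: amply family genereated perf} supplying generation and Theorem \ref{thm: matsui prime thick ideal} supplying $M$-compatibility; your explicit multiplier category $\cal J$ just spells out the step the paper leaves implicit (it is the same device as in Lemma \ref{lem: split gen}). The final identification $X_\sf{red} \iso \spec^\tau \perf X$ via Theorem \ref{tt and triangle} (iii) also matches the paper.
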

    \begin{proof}
        The containment $\spc_\tens \perf X \subset \spc^\tau \perf X$ is clear. Conversely, if $\cal P \in \spc^\tau \perf X$, then for any $d \in \bb Z^{\oplus I}$, we have $\ecal L^{\tens d} \tens \cal P = \cal P$. Since $\bra{\ecal L^{\tens d}\mid d \in \bb Z^{\oplus I}} = \perf X$ by Proposition \ref{prop: amply family genereated perf}, we see that $\cal P$ is a $\tens$-ideal and hence $\cal P$ is a prime $\tens$-ideal by Theorem \ref{thm: matsui prime thick ideal} (ii). The latter claim follows from Theorem \ref{tt and triangle} (iii). 
    \end{proof}
    Finally, let us see mention that we may talk about an open subvariety of the original variety as a spectrum of a multi-polarization. 
\begin{example}\label{example: spherical twist}
    Let $S$ be a smooth projective surface over an algebraically closed field with a $(-2)$-curve $C \subset S$. Then recall that $\ecal O_C$ is a spherical object in $\perf S$ and there is the associated spherical twist $T_{\ecal O_S}$. Note in \cite{ito2023gluing}*{\href{https://arxiv.org/pdf/2309.08147}{Corollary 5.14}}, it is essentially observed that $$\spec_{\tens_S^\bb L} \perf S \cap \spec^{T_{\ecal O_C}}\perf S \iso S\setminus C,$$ so if we take a $\tens$-ample line bundle $\ecal L$ on $S$ and set $\tau = \{T_{\ecal O_C},\tau_\ecal L\}$, we have
    \[
     \spec^{\tau} \perf S \iso S \setminus C. 
    \]
    In particular, classifications of multi-polarizations up to their spectra must involve classifications of $(-2)$-curves. 
\end{example}

\section{Polarizations arising in nature}
In this section, we observe several situations where polarizations appear naturally.
\subsection{Reconstruction of varieties}
First, let us introduce the following notions to conveniently formulate/give proofs of several reconstructions.
\begin{definition}\label{def: pt-ample}
    Let $\cal T$ be a triangulated category and let $X$ be a noetherian scheme. A polarization $\tau$ is said to be \textbf{ample} (resp. \textbf{quasi-ample}) \textbf{in $X$} if 
    \begin{enumerate}
        \item $\spc^\tau \cal T$ has a unique maximal dimensional component $(\spc^\tau \cal T)^*$, and
        \item  there exists a tt-structure $\tens$ on $\cal T$ with a tt-equivalence $(\cal T, \tens) \simeq (\perf X, \tens_{\ecal O_X}^\bb L)$ such that 
        \[
        \spc_\tens \cal T = (\spc^\tau \cal T)^* \subset \spc_\vartriangle \cal T\quad \text{(resp. $\spc_\tens \cal T \subset (\spc^\tau \cal T)^*  \subset \spc_\vartriangle \cal T$)}.
        \]
    \end{enumerate}
        We define the same notions for weak polarizations and multi-polarizations. Moreover, we say a line bundle $\ecal L$ on $X$ is \textbf{pt-ample} if the polarization $\tau_\ecal L$ on $\perf X$ is ample in $X$ and define similarly for a collection of line bundles. Note by \cite{matsukawa2025spectrum}*{\href{https://arxiv.org/abs/2505.02724v1}{Theorem 2.11.}} that if we have the inclusion $\spc_\tens \cal T \subset \spc^\tau \cal T$, then it is open.
\end{definition}
\begin{example}
    Let $X$ be a noetherian scheme. 
    \begin{enumerate}
        \item If $\tau$ is (quasi-)ample in $X$, then so is any conjugation $\eta\circ \tau \circ \eta\inv$ for $\eta \in \auteq \perf X$. Similarly, if $\tau$ is (quasi-)ample in $X$, then so is any polarization $\eta\circ \tau \circ \eta\inv$ on a triangulated category $\cal T$ together with an exact equivalence $\eta: \perf X \simeq \cal T$.  
        \item A $\tens$-ample line bundle $\ecal L$ on $X$ is pt-ample by Proposition \ref{prop: main autoequiv}.
        \item On an elliptic curve, a non-torsion line bundle of degree $0$ is pt-ample, but not $\tens$-ample by Example \ref{example: preoplarization in perf E}. This is a motivating example for why we do not simply ask for $\spc^\tau \cal T = \spc_\tens \cal T$ in the definition. 
        \item For any line bundle $\ecal L$ on $X$, $\tau_\ecal L$ is quasi-ample in $X$. In particular, if $X$ is moreover a proper and Gorenstein variety, then the Serre functor is quasi-ample in $X$ and in any variety $Y$ with $\perf Y\simeq \perf X$.\qedhere         
    \end{enumerate}
\end{example}
Now, let us state a further generalization of the reconstruction of Bondal--Orlov and Ballard from the perspective of polarizations. 
\begin{theorem}\label{thm:Bondal--Orlov}
    Let $X$ be a Gorenstein proper variety of dimension $n$ and suppose $\spec^\ser \perf X$ is a separated scheme whose $n$-dimensional connected components (with reduced structure) are all isomorphic to each other. Then, the following assertions hold:
    \begin{enumerate}
        \item The variety $X$ can be reconstructed as an $n$-dimensional connected component of $\spec^\ser \perf X$ with reduced structure, which is constructed by using only the triangulated category structure of $\perf X$.
        \item If there exists a variety $Y$ with $\perf X \simeq \perf Y$, then $X \iso Y$. \qedhere
    \end{enumerate} 
\end{theorem}
Let us note that thanks to \cite{matsukawa2025spectrum}*{\href{https://arxiv.org/abs/2505.02724v1}{Theorem 2.11}} there is no need for any projectivity assumption and that Theorem \ref{thm:Bondal--Orlov} is indeed a strict generalization of \cite{ito2024new} by the following example (ii).
\begin{example} \label{example: tensor ample} \ 
\begin{enumerate}
    \item A Gorenstein proper variety with pt-ample canonical bundle satisfies the suppositions of Theorem \ref{thm:Bondal--Orlov} by definition.
    \item More specifically, a blow-up of a smooth projective surface with very ample canonical bundle over an algebraically closed field has a $\tens$-ample canonical bundle that is neither ample nor anti-ample by Proposition \ref{lem: tens-ample but not (anti)ample, blow up} and therefore we can still apply Theorem \ref{thm:Bondal--Orlov}.  
    \item By \cite{HO22}*{\href{https://arxiv.org/pdf/2112.13486}{Theorem 4.11}} (cf. Example \ref{example: preoplarization in perf E}), elliptic curves over algebraically closed field of characteristic zero also satisfy the suppositions. \qedhere
\end{enumerate}
\end{example}
Before giving a proof of Theorem \ref{thm:Bondal--Orlov}, let us make the following simple yet effective observation. 
\begin{theorem}\label{theorem: geometric in quasi-geometric}
    Let $\cal T$ be a triangulated category and suppose that there are proper, connected and reduced schemes $X,Y$ with $\perf X \simeq \perf Y \simeq \cal T$. Suppose there is a polarization $\tau$ on $\cal T$ that is ample in $X$ and quasi-ample in $Y$. Then, $\tau$ is ample in $Y$ and $X \iso Y$. The same claim follows for weak polarizations and multi-polarizations. 
\end{theorem}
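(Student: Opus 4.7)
My plan is to combine the ringed-space identifications of Theorem \ref{tt and triangle} (iii) with a standard properness argument. First, the ampleness of $\tau$ in $X$ supplies a tt-structure $\tens_X$ on $\cal T$ together with a tt-equivalence $(\cal T,\tens_X) \simeq (\perf X, \tens_{\ecal O_X}^\bb L)$ such that $\spc_{\tens_X} \cal T = \spc^\tau \cal T$ as subspaces of $\spc_\vartriangle \cal T$. Endowing both with the restriction of the Matsui structure sheaf and using that $X$ is reduced, Theorem \ref{tt and triangle} (iii) promotes this equality to an isomorphism of ringed spaces $\spec^\tau \cal T \iso X_\sf{red} = X$; in particular $\spec^\tau \cal T$ is already a scheme. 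In the same way, quasi-ampleness of $\tau$ in $Y$ combined with reducedness of $Y$ yields $\spec_{\tens_Y} \cal T \iso Y$, and the definition of quasi-ampleness then produces an open immersion of ringed spaces $Y \iso \spec_{\tens_Y} \cal T \inj \spec^\tau \cal T \iso X$.

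Next I would argue that this open immersion is forced to be an isomorphism. Since $X$ is proper, it is in particular separated over $k$, so the proper $k$-scheme $Y$ maps into $X$ by a morphism that is itself proper (a morphism from a proper $k$-scheme to a separated $k$-scheme is proper). A proper open immersion has image that is both open and universally closed, hence clopen in $X$; the connectedness of $X$ then forces the image to be all of $X$, and the open immersion $Y \inj X$ is an isomorphism of schemes. This already gives the second conclusion $X \iso Y$.

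Finally, the same equality, read back through the identifications of the first paragraph, says that the inclusion of subspaces $\spc_{\tens_Y} \cal T \subset \spc^\tau \cal T$ in $\spc_\vartriangle \cal T$ is actually an equality, which is exactly what it means for $\tau$ to be ample in $Y$. Since the definition of (quasi-)ampleness was formulated uniformly for polarizations, weak polarizations and multi-polarizations, and since the argument above uses only the resulting (open) inclusions of subspaces of $\spc_\vartriangle \cal T$ together with the structure-sheaf identifications of Theorem \ref{tt and triangle} (iii), the same proof carries over verbatim to the weak and multi cases. The only bookkeeping point to be careful about is that the inclusion supplied by quasi-ampleness must be interpreted as an open immersion of ringed spaces rather than merely of topological spaces, but this is built into Definition \ref{def: pt-ample} and is automatic from the way $\ecal O_{\cal T, \vartriangle}$ restricts to any subspace of $\spc_\vartriangle \cal T$; I do not anticipate any genuine obstacle beyond this.
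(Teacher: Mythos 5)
Your proposal is correct and follows essentially the same route as the paper: identify $\spec^\tau\cal T \iso X$ via ampleness and Theorem \ref{tt and triangle} (iii), realize $Y$ as an open ringed subspace via quasi-ampleness, and then use properness of $Y$ plus separatedness of $X$ to conclude the image is clopen in the connected scheme $X$, forcing the open immersion to be an isomorphism and the two subspaces of $\spc_\vartriangle\cal T$ to coincide. The extra care you take with the reducedness hypothesis and with the ringed-space (as opposed to merely topological) nature of the open inclusion matches what the paper's proof uses implicitly.
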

\begin{proof}
    By supposition, there exist tt-structures $\tens_X$ and $\tens_Y$ such that
    \begin{itemize}
        \item $(\cal T,\tens_X)$ (resp. $(\cal T,\tens_Y)$) is tt-equivalent to $(\perf X, \tens_{\ecal O_X}^\bb L)$ (resp. $(\perf Y, \tens_{\ecal O_Y}^\bb L)$) and
        \item $(\spec^\tau \cal T)^* = \spec_{\tens_X} \cal T \iso X$ and $Y\iso \spec_{\tens_Y}\cal T \underset{\text{open}}{\subset} (\spec^\tau \cal T)^*$. 
    \end{itemize}
    Since $Y$ is proper and $X$ is separated, $Y$ is a closed subscheme of $X$ as well. Therefore, $Y$ is an open and closed subvariety of the connected scheme $X$ and hence $Y \iso \spec_{\tens_Y}\cal T =( \spec^\tau \cal T)^* \iso X$.
\end{proof}
Let us note that Theorem \ref{theorem: geometric in quasi-geometric} gives quick proofs for variants of \cite{FAVERO20121955}*{\href{https://www.sciencedirect.com/science/article/pii/S0001870812001211}{Theorem 3.8}}.
\begin{corollary}\label{cor: favero1}
    Let $\cal T$ be a triangulated category and suppose that there are proper, connected and reduced schemes $X,Y$ with $\perf X \simeq \perf Y \simeq \cal T$. Let $\ecal L$ be a pt-ample line bundle on $X$ and let $\ecal M$ be any line bundle on $Y$. Suppose there exists a pt-equivalence
    \[
    F:(\perf X, \tau_\ecal L) \overset{\sim}{\to} (\perf Y, \tau_\ecal M).
    \]
    Then, $\ecal M$ is a pt-ample line bundle on $Y$ and $X \iso Y$. The same claims follow for multi-polarizations by replacing $\ecal L$ with an amply family on $X$ (assuming $X$ is moreover divisorial) and $\ecal M$ with a collection of line bundles on $Y$. 
\end{corollary}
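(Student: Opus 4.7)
The plan is to deduce this directly from Theorem \ref{theorem: geometric in quasi-geometric} by viewing $\tau_\ecal L$ and $\tau_\ecal M$ as the same polarization on the common triangulated category $\cal T$, where the pt-equivalence $F$ provides the required natural identification. Concretely, I would fix an identification $\cal T \simeq \perf X$ and regard $\tau := \tau_\ecal L$ as a polarization on $\cal T$. The pt-ampleness of $\ecal L$ on $X$ then says exactly that $\tau$ is ample in $X$ in the sense of Definition \ref{def: pt-ample}, since one may take the tt-structure on $\cal T$ corresponding to $\tens_{\ecal O_X}^{\bb L}$ itself as the witness.

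Next, I would use $F$ to transport $\tau$ to $\perf Y$, obtaining the polarization $F\circ \tau_\ecal L \circ F^{-1}$ on $\perf Y$. The defining property of the pt-equivalence supplies a natural isomorphism $F\circ \tau_\ecal L \circ F^{-1} \iso \tau_\ecal M$. Since the pt-spectrum depends only on the natural isomorphism class of the polarization (a special case of the invariance under pt-equivalences recorded in Definition \ref{definition: ppt-category}), this transported polarization has the same pt-spectrum, as a subspace of $\spc_\vartriangle \perf Y$, as $\tau_\ecal M$. Thus the quasi-pt-ampleness of $\ecal M$ on $Y$ translates into quasi-ampleness of $\tau$ in $Y$ (taking the witnessing tt-structure on $\cal T$ to be the one corresponding to $\tens_{\ecal O_Y}^{\bb L}$ under the identification $\cal T \simeq \perf Y$ obtained by composing the original equivalence with $F^{-1}$).

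At this stage Theorem \ref{theorem: geometric in quasi-geometric} applies to $(\cal T, \tau)$, $X$, and $Y$: the conclusion gives $X \iso Y$ together with the ampleness of $\tau$ in $Y$, the latter being precisely the pt-ampleness of $\ecal M$ on $Y$. For the multi-polarization variant, I would run the same argument componentwise, invoking the multi-polarization form of Theorem \ref{theorem: geometric in quasi-geometric}; the only modification is that the Orlov-type generation statement underlying pt-ampleness of the ample family on $X$ is supplied by Proposition \ref{prop: amply family genereated perf} (via Theorem \ref{thm: divisorial reconstruction}) under the divisoriality hypothesis. There is no real obstacle in the proof, as all the substantive work has already been packaged into Theorem \ref{theorem: geometric in quasi-geometric}; the only point requiring care is the coherent transport of polarizations across $F$, and this is exactly what the pt-equivalence hypothesis delivers.
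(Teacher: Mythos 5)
Your proposal is correct and is essentially identical to the paper's one-line proof: both transport $\tau_\ecal L$ across $F$ to the polarization $F\circ\tau_\ecal L\circ F\inv \iso \tau_\ecal M$, observe it is ample in $X$ and quasi-ample in $Y$, and conclude by Theorem \ref{theorem: geometric in quasi-geometric}. No further comment is needed.
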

\begin{proof}
    Since $F\circ \tau_\ecal L \circ F\inv$ is a polarization on $\perf Y$ that is both ample in $X$ and quasi-ample in $Y$ (as $Y$ is noetherian), we are done by Theorem \ref{theorem: geometric in quasi-geometric}. 
\end{proof}
\begin{remark}In the original result by Favero, varieties are not assumed to be proper (only divisorial), but for our arguments to work, properness is essential. It is interesting to see if there is a way to show Favero's original result from the perspective of the Matsui spectrum.
\end{remark}
Now, with similar ideas, we give a short proof of the theorem of Bondal--Orlov and Ballard.
\begin{proof}[Proof of Theorem \ref{thm:Bondal--Orlov}] For 
    part (i), first note that $X \iso \spec_{\tens_X} \perf X$ is a connected, closed and open subscheme of $\spec^\ser \perf X$ by the same arguments as before. Thus, $\spec_{\tens_X} \perf X$ is an $n$-dimensional connected component of $\spec^\ser \perf X$ with reduced structure. Hence, by taking any $n$-dimensional connected component of $\spec^\ser \perf X$ with reduced structure, we can reconstruct $X$. 
    
    For part (ii), since $Y$ is automatically proper and Gorenstein (e.g., \cite{de2012reconstructing}*{\href{https://www.cambridge.org/core/journals/proceedings-of-the-edinburgh-mathematical-society/article/reconstructing-schemes-from-the-derived-category/7FA013C6F3E91CBFF0EF69631450DA1F}{Proposition 1.5}}), we can apply (i).
\end{proof}
\begin{question}
Is there a non-projective proper Gorenstein variety satisfying the supposition?
\end{question}
Finally, let us generalize yet another reconstruction result \cite{FAVERO20121955}*{\href{https://www.sciencedirect.com/science/article/pii/S0001870812001211}{Lemma 4.1}}. This result clarifies the interplay of automorphisms and line bundles as polarizations, which lead to certain rigidity.
\begin{theorem}\label{thm: favero}
    Let $X$ be a proper variety and take a polarization $\tau$ on $\perf X$. Suppose there are a variety $Y$ and a standard autoequivalence $$\sigma = f_*\circ \tau_{\ecal M}[i] \in \aut(Y) \ltimes \pic Y \times \bb Z[1]$$ together with a pt-equivalence
    \[
    \Phi:(\perf X, \tau) \simeq (\perf Y,\sigma)
    \]
    given by a Fourier--Mukai transform $\Phi=\Phi_\ecal K$ with Fourier--Mukai kernel $\ecal K \in \D(X\times Y)$. Then, the following assertions follow: 
    \begin{enumerate}
    \item If $\tau = \Phi_\ecal J$ for some $\ecal J \in \perf (X\times X)$ and if $\hom_{\perf X}(\ecal O_X, \tau^m \ecal O_X[l]) \neq 0$ for some $l, m \in \bb Z$, then $f^m$ has a fixed point. 
    \item If $\tau = \tau_\ecal L[n]$ with $\ecal L \in \pic X$ satisfying $\h^l(X, \ecal L^{\tens m}) \neq 0$ for some $l,m\in \bb Z$, then $f^m$ has a fixed point.
    \item If $f^m $ has a fixed point for some $m \in \bb Z$ and if every connected component of $\spec^{\tau^m}\perf X$ is isomorphic to $X$, then we have $f^m = \id_Y$ and $X \iso Y$. 
\end{enumerate}
In particular, if $\tau = \tau_\ecal L$ for a line bundle $\ecal L$ such that $\ecal L^{\tens m}$ is pt-ample with $\spc^{\tau_{\ecal L^{\tens m}}} \perf E$ connected and $\h^l(X,\ecal L^{\tens m}) \neq 0$ for some $l,m\in \bb Z$ (e.g., $\ecal L^{\tens m}$ is $\tens$-ample with global sections), then $f^m = \id_Y$ and $X \iso Y$. 
\end{theorem}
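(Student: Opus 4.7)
The plan is to reduce the In particular clause to parts (ii) and (iii). With $\tau = \tau_\ecal L$ (so $n=0$), the cohomological nonvanishing $\h^l(X, \ecal L^{\tens m}) \neq 0$ is exactly the hypothesis of (ii), producing a fixed point of $f^m$; pt-ampleness of $\ecal L^{\tens m}$ makes $\tau^m = \tau_{\ecal L^{\tens m}}$ ample in $X$, so (iii) upgrades this to $f^m = \id_Y$ and $X \iso Y$. It therefore suffices to prove (i)-(iii), where (ii) follows from (the argument behind) (i) since $\tau_\ecal L[n]$ is given by the FM kernel $\Delta_{X,*}\ecal L[n]$ (or by running the same calculation directly when $X$ is not smooth).

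For (iii), the plan is to argue entirely on the pt-spectrum. Ampleness of $\tau^m$ in $X$ produces a tt-structure $\tens_X$ on $\perf X$ tt-equivalent to $(\perf X, \tens_{\ecal O_X}^\bb L)$ with $\spec^{\tau^m}\perf X = \spec_{\tens_X}\perf X \iso X$, and the pt-equivalence $\Phi$ transports this to $\spec^{\sigma^m}\perf Y \iso X$ as ringed spaces. Iterating $h_*\circ\tau_{\ecal N} = \tau_{h_*\ecal N}\circ h_*$ writes $\sigma^m = (f^m)_*\circ\tau_{\ecal M_m}[im]$ for an explicit $\ecal M_m \in \pic Y$; since tensoring with an invertible object and shifting preserve prime $\tens$-ideals, the action of $\sigma^m$ on the open subset $Y \subset \spec_\vartriangle \perf Y$ coincides with $(f^m)_*$. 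Hence $Y \cap \spec^{\sigma^m}\perf Y = \operatorname{Fix}(f^m)$, which is open in the irreducible ringed space $\spec^{\sigma^m}\perf Y \iso X$ (nonempty by hypothesis, hence dense) and closed in the proper variety $Y$. Because a morphism from a proper scheme to a separated scheme is closed, $\operatorname{Fix}(f^m)$ is also closed in $X$; being nonempty open-and-closed in the irreducible $X$, it equals $X$. This exhibits $X \iso \operatorname{Fix}(f^m)$ as a closed subvariety of $Y$; since derived equivalent proper varieties have the same dimension, this is an irreducible closed subscheme of irreducible $Y$ of dimension $\dim Y$, hence equals $Y$, giving $f^m = \id_Y$ and $X \iso Y$.

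For (i), the plan is to reformulate the hypothesis cohomologically using $\tau = \Phi_\ecal J$ and transport it across $\Phi$ to $Y \times Y$. The tautological identity $\hom_{\perf X}(\ecal O_X, \Phi_{\ecal J^{\star m}}\ecal O_X[l]) \iso \h^l(X \times X, \ecal J^{\star m})$ rewrites the hypothesis as $\h^l(X \times X, \ecal J^{\star m}) \neq 0$. The FM equivalence $\Phi$ induces an equivalence on FM kernels $\perf(X \times X) \simeq \perf(Y \times Y)$ sending the kernel $\ecal J^{\star m}$ of $\tau^m$ to the kernel $\Gamma_{f^m,*}\ecal M_m[im]$ of $\sigma^m$, supported on the graph $\Gamma_{f^m} \subset Y \times Y$, and sending $\ecal O_{X \times X}$ to $\ecal G^\vee \boxtimes \ecal G$ where $\ecal G := \Phi \ecal O_X$. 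Thus the hypothesis becomes $\hom_{\perf(Y \times Y)}(\ecal G^\vee \boxtimes \ecal G, \Gamma_{f^m,*}\ecal M_m[im + l]) \neq 0$, and one aims to derive from this that $\Gamma_{f^m}$ must meet $\Delta_Y$, i.e., that $f^m$ has a fixed point. Part (ii) then follows by running the same kernel-level argument with the (Gorenstein-completed or direct) kernel of $\tau_\ecal L[n]$.

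The main obstacle is making the "no fixed point implies vanishing" step rigorous. A naive support-intersection argument yields only $(\supp\ecal G \times \supp\ecal G) \cap \Gamma_{f^m} \neq \emptyset$, i.e., $\supp\ecal G \cap f^{-m}(\supp\ecal G) \neq \emptyset$, which is strictly weaker than existence of a fixed point unless one can force $\supp\ecal G = Y$. To bridge this gap I would use Grothendieck duality along the closed immersion $\Gamma_{f^m} \hookrightarrow Y \times Y$ to convert the $Y \times Y$-level hom into a $Y$-level identity between $\ecal G$ and $(f^m)^*\ecal G$ twisted by $\ecal M_m$ and $\omega_{\Gamma_{f^m}/Y\times Y}$, then combine this with the rigidity $\End_{\perf Y}(\ecal G) = k$ (since $X$ is connected) and the equivariance $\Phi \circ \tau^m \iso \sigma^m \circ \Phi$ applied to suitable test objects in $\perf X$; these together should force an incompatibility when $\Gamma_{f^m} \cap \Delta_Y = \emptyset$. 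Executing this refinement cleanly --- or replacing it with a Lefschetz-type argument via $\chi(\ecal G, \sigma^m\ecal G)$ and the vanishing Lefschetz number of a fixed-point-free $f^m$ --- is the crux.
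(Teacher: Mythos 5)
Your treatment of part (iii) and of the reductions (the "in particular" clause via (ii)+(iii), and (ii) via the kernel $\Delta_{X*}\ecal L[n]$) matches the paper's proof in substance: the paper likewise identifies the action of $\sigma^m$ on $\spc_{\tens_Y^\bb L}\perf Y$ with that of $(f^m)_*$, observes that $Y^{f^m}$ is a nonempty open subset of $\spc^{\sigma^m}\perf Y\iso X$ and closed in $Y$, and concludes $Y^{f^m}=Y$ by a dimension/connectedness argument before invoking Theorem \ref{theorem: geometric in quasi-geometric}. That part is fine.

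The genuine gap is in part (i), and it is the one you flag yourself: you transport the wrong test object. Rewriting the hypothesis as $\h^l(X\times X,\ecal J^{\star m})\neq 0$, i.e.\ as a Hom out of $\ecal O_{X\times X}=\ecal O_X\boxtimes\ecal O_X$, sends you (under the kernel equivalence $\Phi_{\ecal K\boxtimes\ecal K'}$) to a Hom out of a box product, and as you note this only yields $\supp\ecal G\cap f^{-m}(\supp\ecal G)\neq\emp$, which does not produce a fixed point. Neither proposed repair closes this: the Lefschetz-type argument controls only the Euler characteristic $\chi(\ecal G,\sigma^m\ecal G)$, whereas the hypothesis is nonvanishing of a single group $\hom(\ecal G,\sigma^m\ecal G[l])$, which is compatible with $\chi=0$; and the Grothendieck-duality-plus-rigidity route is left unexecuted precisely at the point where the argument must succeed. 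The paper's resolution is to test against the diagonal instead: one has an injection (apply the faithful functor $\Delta_*$)
\[
0\neq\hom_{X}(\ecal O_X,\tau^m\ecal O_X[l])\hookrightarrow\hom_{X\times X}(\Delta_*\ecal O_X,\ecal J[l]),
\]
where $\ecal J=\Delta_*\tau^m\ecal O_X$ is the kernel of $\tau^m$, and the kernel-level equivalence sends $\Delta_*\ecal O_X$ to $\Delta_*\ecal O_Y$ (it is the kernel of the identity, which is intertwined with the identity) and $\ecal J$ to the kernel $(\id_Y\times f^m)_*\ecal M_m[mi]$ of $\sigma^m$. Adjunction then gives
\[
\hom_{Y\times Y}\bigl(\Delta_*\ecal O_Y,(\id_Y\times f^m)_*\ecal M_m[mi+l]\bigr)\iso\hom_Y\bigl(\ecal O_Y,\Delta^!(\id_Y\times f^m)_*\ecal M_m[mi+l]\bigr),
\]
which is computed by a complex supported on $\Delta^{-1}(\Gamma_{f^m})=Y^{f^m}$; nonvanishing forces $Y^{f^m}\neq\emp$. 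The essential point you are missing is that the diagonal, not the structure sheaf of the product, is the object whose image under the kernel equivalence is pinned down by the pt-equivalence and whose Homs detect the intersection with $\Gamma_{f^m}$.
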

\begin{proof} Let us first recall some computations from \cite{FAVERO20121955}. Take $\ecal K' \in \D(X\times Y)$ so that $\Phi\inv = \Phi_{\ecal K'}$. Then, we get an equivalence $\Phi_{\ecal K \boxtimes \ecal K'}:\perf(X\times X) \simeq \perf(Y\times Y)$ such that if $\Phi:(\perf X, \Phi_\ecal S) \to (\perf Y, \Phi_\ecal T)$ is a pt-equivalence, then $\Phi_{\ecal K \boxtimes \ecal K'}(\ecal S)\iso \ecal T$. 
    \begin{enumerate}
        \item We proceed by mimicking the proof of \cite{FAVERO20121955}*{\href{https://www.sciencedirect.com/science/article/pii/S0001870812001211}{Lemma 4.1}}. Fix $l,m\in \bb Z$ so that $$\hom_{\perf X}(\ecal O_X, \tau^m \ecal O_X[l]) \neq 0$$ and write $\tau^m = \Phi_\ecal J$. Then, since $\Delta_*\tau^m\ecal O_X = \ecal J$ and $\sigma^m = \Phi_{(\id\times f^m)_*\ecal M_m[mi]}$ for a line bundle $\ecal M _m:= f_*\ecal M \tens \cdots \tens f_*^m \ecal M$, we get 
        \begin{align*}
            0\neq \hom_{X}(\ecal O_X, \tau^m \ecal O_X[l]) &\inj \hom_{X\times X}(\Delta_*\ecal O_X,\ecal J[l]) \\
            &\iso \hom_{Y\times Y}(\Phi_{\ecal K\boxtimes \ecal K'}(\Delta_*\ecal O_X), \Phi_{\ecal K\boxtimes \ecal K'}(\ecal J)[l])\\
            &\iso \hom_{Y\times Y}(\Delta_*\ecal O_Y,(\id_Y\times f^m)_*\ecal M_m[mi+l])\\
            &\iso \hom_{Y}(\ecal O_Y, \Delta^!(\id_Y\times f^m)_*\ecal M_m[mi+l]).
        \end{align*}
        Here, the first inclusion follows since the diagonal map $\Delta$ has a left inverse and hence $\Delta_* = \bb R \Delta_*$ is faithful on $\D_\sf{qc}$. Now, since the last term is the cohomology of a complex supported on the fixed locus $Y^{f^m}$ of $f^m$, we see that $f^m$ has a fixed point. 
        \item Immediately follows from part (i). 
        \item We may assume $m = 1$ since $\sigma^m = f_*^m \circ \tau_{\ecal M_m}[mi]$. Now, since $\spc_{\tens_Y^\bb L} \perf Y \subset \spc_\vartriangle
        \perf Y$ is an open inclusion, by taking fixed points of $\sigma$, we obtain 
        \[
        \emp \neq (\spc_{\tens_Y^\bb L} \perf Y)^\sigma \underset{\text{open}}{\subset} \spc^\sigma \perf Y \iso \spc^\tau \perf X. 
        \]
        Thus, by supposition $\dim (\spc_{\tens_Y^\bb L} \perf Y)^\sigma = \dim Y$. Now, note $$(\spc_{\tens_Y^\bb L} \perf Y)^\sigma = (\spc_{\tens_Y^\bb L} \perf Y)^{f_*\circ \tau_\ecal M} = (\spc_{\tens_Y^\bb L} \perf Y)^{f_*} \iso Y^f$$ since $\tau_\ecal M$ fixes every point in the Balmer spectrum. Therefore, we have $\dim Y^f = \dim Y$ and hence we necessarily have $f = \id_Y$. Then, we have $X \iso Y$ by Theorem \ref{theorem: geometric in quasi-geometric}.  
    \end{enumerate}
    The last claim follows from part (ii) and (iii). 
\end{proof}
There are some simplifications we can make to suppositions.
\begin{remark} Use the same notations as in Theorem \ref{thm: favero}.
    \begin{enumerate}
        \item If $X$ is projective, then $\Phi$ and $\tau$ are automatically of Fourier--Mukai type by \cite{ballard2009equivalences}*{\href{https://arxiv.org/pdf/0905.3148}{Theorem 1.2}}. 
        \item If $X$ is a variety all of whose automorphisms have a fixed point, then part (iii) simplifies. For example, any smooth proper complex variety $X$ with $\h^i(X,\ecal O_X) = 0$ for all $i>0$ falls into this category by the holomorphic Lefschetz fixed point formula. This includes smooth projective rationally connected varieties (e.g., smooth weak Fano varieties over an algebraically closed field of characteristic zero), Enriques surfaces and any smooth proper varieties birationally equivalent to those varieties. 
        \item On the other hand, the equidimensionality in part (iii) cannot be removed with our current proof, so we cannot apply this theorem with $\tau = \tau_\ecal L$ for a pt-ample line bundle $\ecal L$ with $(\spc^{\tau_\ecal L} \perf X)^* \subsetneq \spc^{\tau_\ecal L} \perf X$. \qedhere   
    \end{enumerate}
\end{remark}
Finally, let us leave another observation suggesting a further interplay between "positivity" of line bundles and fixed-point properties of automorphisms (see also \cite{FAVERO20121955}*{\href{https://www.sciencedirect.com/science/article/pii/S0001870812001211}{Remark 4.2}}). 
\begin{example}\label{example: pt-ample and fixed points}
   Let $E$ be an elliptic curve over an algebraically closed field of characteristic zero and take a line bundle $\ecal L$ on $E$ of degree $0$ with $\ecal L^{\tens n} \not \iso \ecal O_E$ for any $n>0$. In particular, $\ecal L$ is pt-ample. Then, as we observed in Example \ref{example: preoplarization in perf E}, $\ecal L$ is quite close to cutting out the Balmer spectrum while $\h^l(E, \ecal L^{\tens m}) = 0$ for all $l,m\in \bb Z$. Now, using the same notations as in Example \ref{example: preoplarization in perf E}, consider an equivalence
    \[
    \Phi_{\ecal P}: \perf \hat E \to \perf E,
    \]
    where $\hat E = M_{1,0} = \pic^0(E)$ is the dual of $E$ and $\ecal P = \ecal U_{1,0}$ is the Poincar\'e bundle. Then, it is easy to check that $\Psi:= \Phi_\ecal P\inv \circ \tau_\ecal L \circ \Phi_\ecal P:\perf \hat E\overset{\sim}{\to} \perf \hat E$ sends skyscraper sheaves at closed points to skyscraper sheaves at closed points and hence we can write $\Psi = f_* \circ \tau_\ecal M$ for some $f \in \aut \hat E$ and $\ecal M \in \pic \hat E$. Therefore, we have a pt-equivalence
    \[
    \Phi_{\ecal P}: (\perf \hat E,f_* \circ \tau_\ecal M)\simeq (\perf E,\tau_\ecal L).
    \] Note it is moreover clear that we have $f$ equals the translation ${t_{[\ecal L]}}$ on $\hat E$ by $[\ecal L] \in \hat E$, i.e., $$f = {t_{[\ecal L]}}: \hat E \overset{\sim}{\to}  \hat E,\quad [\ecal E] \mapsto [\ecal E \tens_E \ecal L].$$ 
    Hence, we have observed that the pt-ample line bundle $\ecal L$ fails to satisfy the supposition of Theorem \ref{thm: favero} (ii) and indeed $f = {t_{[\ecal L]}}$ has no periodic point. On the other hand, if we take $\ecal L$ to be a line bundle of degree $0$ with $\ecal L^{\tens n} \iso \ecal O_E$ for some $0 \neq n \in \bb Z$, then $f^n = t_{[\ecal L^n]} = t_{[\ecal O_E]} = \id_E$ and $\spc^{\tau_{\ecal L^{\otimes n}}} \perf E = \spc_\vartriangle \perf E$ is equidimensional of dimension $1$, so we can apply Theorem \ref{thm: favero} (iii) while $\ecal L$ is not pt-ample. 
\end{example}
\subsection{Geometric realization of noncommutative projective schemes and relations to Iitaka fibrations}\label{subsec: Iitaka}
We observe that pt-spectra can be thought of as  geometric realization of (derived) noncommutative projective schemes in the sense of \cite{artin1994noncommutative}. Along the way, we will also make some observations on relations with birational geometry. First, let us quickly recall materials from \cite{artin1994noncommutative}. 
\begin{definition}
    Let $A$ be a right noetherian $\bb N$-graded algebra (over $k$). The \textbf{noncommutative projective scheme} ${\proj}^\sf{nc} A$ associated to $A$ is a triple $(\operatorname{\sf{qgr}} A, s_A, A)$, where 
    \begin{itemize}
        \item the symbol  $\operatorname{\sf{qgr}} A$ denotes the ($k$-linear) abelian category obtained as the quotient of the abelian category of finitely generated $\bb Z$-graded (right) $A$-modules by the Serre subcategory of finitely generated torsion $\bb Z$-graded $A$-modules,
        \item the symbol $s_A$ denotes the autoequivalence of $\operatorname{\sf{qgr}} A$ given by shifting gradings by one, and 
        \item the symbol $A$ denotes the image of $A$ in $\operatorname{\sf{qgr}} A$ by abuse of notation.
    \end{itemize}
    Similarly, let $\operatorname{\sf{QGr} }A$ denote the quotient category of the abelian category of $\bb Z$-graded (right) $A$-modules by the Serre subcategory of torsion $\bb Z$-graded $A$-modules. Note $\operatorname{\sf{QGr}}A$ is the ind-completion of $\operatorname{\sf{qgr}} A$. 
\end{definition}
The following classical results are their guiding examples (see for example \cite{Har77}*{Exercise II.5.9.}):
\begin{theorem}\label{thm: artin-zhang guiding}
    Let $A$ be a finitely generated $\bb N$-graded commutative algebra generated in degree $1$. 
    \begin{enumerate}
        \item The functor
    \[
    \Gamma: \operatorname{\sf{Coh}}\proj A \to \operatorname{\sf{qgr}} A, \quad \ecal M \mapsto \oplus_{n \in \bb Z} \Gamma(\proj A, \ecal M(n))
    \]
    is an equivalence of abelian categories, where $\operatorname{\sf{Coh}} \proj A$ denotes the abelian category of coherent sheaves on $\proj A$. Under this equivalence, the autoequivalence $-\tens_{\ecal O_{\proj A}} \ecal O_{\proj A}(1)$ on $\operatorname{\sf{Coh}} (\proj A)$ corresponds to the autoequivalence $s_A$ on $\operatorname{\sf{qgr}}A$. Furthermore, $\Gamma$ naturally extends to an equivalence
    \[
    \Gamma: \operatorname{\sf{QCoh}}\proj A \simeq \operatorname{\sf{QGr} A},
    \]
    where $\operatorname{\sf{QCoh}}\proj A$ denotes the abelian category of quasi-coherent sheaves on $\proj A$. 
    \item The functor $\Gamma$ above induces an isomorphism
\[
\proj A = \proj\l (\oplus_{n\geq 0}\Gamma(\proj A, \ecal O_{\proj A}(n))\r) \iso \proj \l (\oplus_{n\geq 0}\hom_{{\operatorname{\sf{qgr}}A}}{\l(A,s_A^nA\r)}\r)
\]
of schemes. In particular, note that the right hand side is constructed precisely by the data of $\proj^\sf{nc} A$. \qedhere
\end{enumerate}
    
\end{theorem}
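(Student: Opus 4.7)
The plan is to prove both parts via the classical Serre correspondence between finitely generated graded $A$-modules modulo torsion and coherent sheaves on $\proj A$. For part (i), I would first construct the candidate quasi-inverse $\widetilde{(-)}: \operatorname{\sf{qgr}} A \to \operatorname{\sf{Coh}} \proj A$, the sheafification functor sending a finitely generated graded $A$-module $M$ to the sheaf whose sections on $D_+(f)$ are $M_{(f)}$, the degree-zero part of the localization $M_f$. Since $A$ is generated in degree one, the $D_+(f)$ for $f \in A_1$ form an affine open cover of $\proj A$; this ensures $\widetilde{M}$ is coherent when $M$ is finitely generated, and that $\widetilde{(-)}$ descends to $\operatorname{\sf{qgr}} A$ (finitely generated torsion modules sheafify to zero, since they are killed by any $f \in A_1$ raised to a sufficient power).

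To verify that $\Gamma$ and $\widetilde{(-)}$ are mutually quasi-inverse, the key steps are (a) showing the natural unit map $M \to \bigoplus_n \Gamma(\proj A, \widetilde{M}(n))$ has finitely generated torsion kernel and cokernel, and (b) showing the counit $\widetilde{\Gamma(\ecal F)} \to \ecal F$ is an isomorphism for coherent $\ecal F$. Step (b) follows by presenting $\ecal F$ as a cokernel of a map between finite sums of twists $\ecal O(-n_i)$ (via Serre's theorem that twists are globally generated in high degree) and reducing to $\ecal F = \ecal O$, which comes down to the standard computation $\Gamma(\proj A, \ecal O(n)) = A_n$ for $n \gg 0$; step (a) reduces analogously. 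Under this equivalence, $\ecal O_{\proj A}$ corresponds to the image of $A$ in $\operatorname{\sf{qgr}} A$ and $\ecal O_{\proj A}(1)$ to $s_A A$, so tensoring with $\ecal O_{\proj A}(1)$ translates into the grading shift $s_A$, as required. The extension to $\operatorname{\sf{QCoh}} \proj A \simeq \operatorname{\sf{QGr}} A$ follows by passing to ind-completions, using that both categories are locally finitely generated Grothendieck abelian categories with the coherent (resp. finitely generated) objects as compact generators.

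Part (ii) is then a direct consequence: the equivalence and compatibility with autoequivalences give
\[
\hom_{\operatorname{\sf{qgr}} A}(A, s_A^n A) \iso \hom_{\operatorname{\sf{Coh}} \proj A}(\ecal O_{\proj A}, \ecal O_{\proj A}(n)) = \Gamma(\proj A, \ecal O_{\proj A}(n)),
\]
and these isomorphisms respect graded ring structures (composition on the left, tensor product on the right), so the resulting rings are isomorphic as $\bb N$-graded rings. One then invokes the classical fact that $\proj A \iso \proj(\bigoplus_{n\geq 0} \Gamma(\proj A, \ecal O_{\proj A}(n)))$ under the hypothesis that $A$ is generated in degree one, since then $A$ and its section ring of $\ecal O_{\proj A}(1)$ agree in all sufficiently high degrees and $\proj$ depends only on the high-degree part. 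The main technical obstacle is the cohomological verification of steps (a) and (b), for which I would cite the treatment in Hartshorne, Chapter II (Proposition II.5.15 and Exercise II.5.9) rather than reprove it, focusing the writeup on the autoequivalence correspondence and the deduction of part (ii).
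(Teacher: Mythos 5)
Your proposal is correct and coincides with what the paper does: the paper offers no proof of this statement, treating it as the classical Serre correspondence and citing Hartshorne, Exercise II.5.9, which is exactly the argument (sheafification quasi-inverse, unit/counit isomorphisms up to finitely generated torsion, and the high-degree comparison $A_n \cong \Gamma(\proj A, \ecal O(n))$ for $n \gg 0$) that you sketch and ultimately also defer to Hartshorne for. The only point worth polishing in a writeup is that $\oplus_{n\in\bb Z}\Gamma(\proj A,\ecal M(n))$ need not itself be finitely generated (e.g.\ for $\ecal M$ supported at a point), so one should either truncate to $n\geq n_0$ or note that the functor is well defined only as a functor into the quotient category; this is standard and implicit in your step (a).
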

Therefore, for a general right noetherian $\bb N$-graded algebra $A$, we may think of $\proj^\sf{nc} A$ as a "(noncommutative) space" whose abelian category of coherent sheaves (resp. quasi-coherent sheaves) is $\operatorname{\sf{qgr}} A$ (resp. $\operatorname{\sf{QGr}} A$). Now, to make comparisons of pt-spectra with noncommutative projective schemes and to see some relations with birational geometry, let us introduce some notions and recall some facts on the Iitaka fibration. 
\begin{notation}
For a pt-category $(\cal T, \tau)$ and a fixed object $c \in \cal T$, define the $\bb N$-graded (not necessarily commutative) algebra
    \[
    \cal R_{\tau,c}:= \oplus_{n \geq 0}\hom_{\cal T}(c, \tau^n(c)),
    \]
    where multiplications are given by compositions under identifications $$\tau^m:\hom_\cal T(c,\tau^n(c)) \iso \hom_\cal T(\tau^m(c),\tau^{m+n}(c)).$$ For a pt-category $(\perf X,\tau_\ecal L)$ given by a line bundle $\ecal L$ on a normal projective variety $X$, note that this construction gives the \textbf{section ring}
    \[
    \cal R_\ecal L:= \cal R_{\tau_{\ecal L},\ecal O_X} \iso \oplus_{n \geq 0} \Gamma(X,\ecal L^{\tens n})
    \]
    of $\ecal L$, where multiplications are given by multiplication of sections and hence are commutative. Note if $\cal R_{\ecal L}$ is a finitely generated $k$-algebra (e.g., if $\ecal L$ is semi-ample (\cite{lazarsfeld2017positivity}*{Example 2.1.30}) or if $\ecal L = \omega_X$ for a smooth projective variety $X$ (\cite{birkar2010existence})) and if the Iitaka dimension of $\ecal L$ is positive, then there is a rational map
    \[
    \phi_{\ecal L}: X \ratmap \proj \cal R_\ecal L
    \]
    that is birationally equivalent to the natural rational maps $$\phi_{|\ecal L^{\tens n}|}: X \ratmap Y_n:=\phi_{|\ecal L^{\tens n}|}(X) \subset \bb P\h^0(X, \ecal L^{\tens n})$$ for all large enough $n$ with $\ecal L^{\tens n}$ being effective, in which case $Y_n \iso \proj \cal R_\ecal L$ by finite generation of $\cal R_\ecal L$ (cf. \cite{lazarsfeld2017positivity}*{Theorem 2.1.33}). 
    The rational map $\phi_\ecal L$ is called the \textbf{Iitaka fibration} and $\dim \proj \cal R_\ecal L$ equals the Iitaka dimension of $\ecal L$. Furthermore, if $\ecal L$ is moreover {big} (resp. semi-ample), then $\phi_\ecal L$ is birational (resp. defined everywhere) (cf. \cite{lazarsfeld2017positivity}*{Definition 2.2.1. (resp. Theorem 2.1.27.)}).

    As a special case, when $X$ is a smooth projective variety, the construction above gives the \textbf{canonical ring} $\cal R_X: = \cal R_{\omega_X}$ of $X$ and the \textbf{canonical model} $X^\sf{can}:= \proj \cal R_X$ of $X$. By the results above, if the Kodaira dimension of $X$ (i.e., the Iitaka dimension of $\omega_X$) is positive, we have a natural rational map
    \[
    \phi_X: X \ratmap X^\sf{can}
    \]
    where $\dim X^\sf{can}$ equals the Kodaira dimension of $X$ and $\phi_X$ is birational when $X$ is \textbf{of general type} (i.e., $\omega_X$ is big). The canonical ring and the canonical model are important in birational geometry since they are birational invariants of smooth projective varieties. 
\end{notation}
Now, let us make our guiding observation. 
\begin{prop}\label{prop: nps = pts}
     Let $X$ be a projective scheme and take an ample line bundle $\ecal L$. Then, we have isomorphisms
     \[
     \spec^{\tau_\ecal L} \perf X \iso X \iso (\proj \cal R_{\ecal L})_\sf{red}. 
     \]
     
     Moreover, we have a pt-equivalence
     \[
     (\perf X,\tau_{\ecal L}) \simeq (\perf \proj^\sf{nc} \cal R_{\ecal L}, s_{\cal R_\ecal L})
     \]
     where $\perf \proj^\sf{nc} \cal R_{\ecal L}$ is the subcategory on compact objects in the unbounded derived category $\D(\operatorname{\sf{QGr}} \cal R_\ecal L)$ of $\operatorname{\sf{QGr}} \cal R_\ecal L$ and $s_{\cal R_\ecal L}$ denotes the restriction of the derived functor of $s_{\cal R_\ecal L}$ on $\D(\operatorname{\sf{QGr}} \cal R_\ecal L)$ to compact objects by slight abuse of notation. 
\end{prop}
\begin{proof}
    For the first claim, the first isomorphism is Proposition \ref{example:guiding} and the second isomorphism is classical (e.g., cf. \cite{GW10}*{Corollary 13.75}). The last claim follows from Theorem \ref{thm: artin-zhang guiding} (i) and the fact that the compact objects in the unbounded derived category of quasi-coherent sheaves on a quasi-compact quasi-separated scheme agrees with perfect complexes (e.g., cf. \cite{stacks-project}*{\href{https://stacks.math.columbia.edu/tag/09M8}{Tag 09M8}}). 
\end{proof}

Therefore, given a noncommutative projective scheme $\proj^{\sf{nc}} A$, we may think of the ringed space
\[
\spec^{s_A} \perf \proj^\sf{nc} A
\]
as its geometric realization with reduced structure. The following is one obvious future direction, which we do not pursue further in this paper.
\begin{question}
    How much information of $A$ and $\perf \proj^\sf{nc} A$ is retained in $\spec^{s_A} \perf \proj^\sf{nc} A$. Moreover, how rich is the geometry of $\spec^{s_A} \perf \proj^\sf{nc} A$ when $A$ is noncommutative? 
\end{question}
\begin{remark}
    We can learn several more observations from Proposition \ref{prop: nps = pts}.
    \begin{enumerate}
        \item It is natural to ask if we can also recover the non-reduced structure sheaf on $\spc^{\tau_\ecal L}\perf X$, corresponding to $\ecal O_X$. Not surprisingly, it is possible if we use the extra data of an object $c:= \ecal M[n] \in \perf X$ for any line bundle $\ecal M$ on $X$ and any $n \in \bb Z$. Indeed, the same construction as the structure sheaf of the tt-spectrum where we use $c$ instead of the tensor unit object (cf. the last paragraph of Construction \ref{construction: structure sheaf of tt-spectra}), we can define the sheaf $\ecal O_{\tau_{\ecal L},c}:=\ecal O_{\spc^{\tau_{\ecal L}}\perf X,c}$ of rings on $\spc^{\tau_\ecal L} \perf X$ and moreover by construction we have an isomorphism
        \[
        \spec^{\tau_\ecal L}_{c} \perf X:= (\spc^{\tau_\ecal L} \perf X, \ecal O_{\tau_{\ecal L},c}) \iso \spec_{\tens_X} \perf X \iso \proj \cal R_{\ecal L}.
        \]
        Note on the right hand side, we construct a ringed space using both $\tau_\ecal L$ and $\ecal O_X$ simultaneously while on the left hand side, we construct the underlying topological space and the structure sheaf separately, which indicates different nature of those two constructions.
        
        Now, since a noncommutative projective scheme $\proj^\sf{nc} A$ comes with a triple $(\perf \proj^\sf{nc} A,s_A,A)$, we can construct a ringed space $$\spec_A^{s_A} \perf \proj^\sf{nc} A := (\spc^{s_A} \perf \proj^\sf{nc} A, \ecal O_{s_A,A})$$ as above, which may be thought of as a more natural geometric realization of $\proj^\sf{nc} A$. Note on the other hand that $\ecal O_{s_A,A}$ is in general a sheaf of noncommutative algebras. 
        \item For a line bundle $\ecal L$ on a projective variety $X$, $\spec^{\tau_\ecal L}\perf X$ and $\proj \cal R_\ecal L$ do not necessarily agree. For example, if $X$ is a smooth projective variety, then we have
        \[
        \spec^{\tau_{\omega_X}} \perf X = \spec^\ser \perf X
        \]
        while we have $\proj \cal R_{\omega_X} = \proj \cal R_{X} = X^\sf{can}$. To be more concrete, let $S'$ be a blow-up of a smooth projective surface $S$ with very ample canonical bundle at a point as in Proposition \ref{lem: tens-ample but not (anti)ample, blow up}. Since the canonical model is birational invariant, we may relate them via the Iitaka fibration
        \[
        \spec^{\tau_{\omega_{S'}}} \perf S' \iso S' \ratmap S \iso \proj \cal R_{S} \iso \proj \cal R_{S'}.
        \]
        Therefore, in general, the pt-spectra are supposed to remember more birational geometric data than the Proj construction.  \qedhere
    \end{enumerate}
\end{remark}
Combining (i) and (ii) for a smooth projective variety $X$ with a good enough line bundle $\ecal L$, the Iitaka fibration provides a way of naturally comparing two ringed spaces 
$\spec^{\tau_\ecal L}_{\ecal O_X} \perf X$ and $\proj \cal R_{\ecal L}$ constructed from the data of the same triple $(\perf X, \tau_{\ecal L}, \ecal O_X)$ via the rational map
\[
\spec^{\tau_\ecal L}_{\ecal O_X} \perf X\supset \spec_{\tens_X}\perf X \iso X \ratmap \proj \cal R_{\ecal L}  = \spec^{s_{\cal R_\ecal L}}_{\cal R_\ecal L} \perf \proj^\sf{nc} \cal R_\ecal L. 
\]
Now, a natural question is if we can do this process purely categorically. Namely:
\begin{question}
    Let $X$ be a smooth projective variety and $\ecal L$ be a nice enough (e.g., big) line bundle on $X$. Can we construct a pt-functor 
    \[
    (\perf \proj^\sf{nc} \cal R_\ecal L,s_{\cal R_\ecal L}) \to (\perf X,\tau_\ecal L)
    \]
    (or the other way around) that induces a "rational" map
    \[
    \spec^{\tau_\ecal L}_{\ecal O_X} \perf X \ratmap \proj \cal R_\ecal L
    \]
    agreeing with the Iitaka fibration? For $\cal L = \omega_X$, is the corresponding rational map $\spec^\ser \perf X \ratmap X^\sf{can}$ universal in the sense that for any Fourier--Mukai partner $Y$ that is birationally equivalent to $X$, it is birationally equivalent to $\spec^\ser \perf Y \ratmap Y^\sf{can}$? Such categorification of Iitaka fibrations would align with broader objectives of the noncommutative minimal model program (\cite{halpern2023noncommutative}).  
\end{question}
\begin{remark}
    Such a map can be understood as an analogue of comparison maps in tensor triangulated geometry (cf. \cite{balmer2010spectra}). 
\end{remark}

\subsection{Constructions of mirror partners as spectra of Fukaya categories}\label{subsec: mirror}
Homological mirror symmetry predicts that there is an equivalence between the bounded derived category $\D^b _\coh (X)$ of coherent sheaves on a certain variety $X$ over $\bb C$ and a certain category $\operatorname{\sf{Fuk}}(M,\omega)$ associated to a symplectic manifold $(M,\omega)$ possibly with additional data. The notation $\operatorname{\sf{Fuk}}(M,\omega)$ is based on the Fukaya category, but it could denote the Fukaya-Seidel category, the (partially) wrapped Fukaya category, the category of constructible sheaves with prescribed microsupport, etc. If there is such an equivalence $\D^b_\coh(X) \simeq \operatorname{\sf{Fuk}}(M,\omega)$, we say $X$ and $(M,\omega)$ are \textbf{mirror partners}. On the other hand, we know that in general there are non-isomorphic Fourier--Mukai partners, so we have the following natural questions.
\begin{question}
    Given a symplectic manifold $(M,\omega)$, is there a canonical choice of an algebraic mirror partner? What kind of data of the symplectic manifold is responsible for determining its mirror partner(s)? 
\end{question}
If we suppose that a mirror partner of $(M,\omega)$ is a divisorial variety $X$ (e.g., a smooth variety or a quasi-projective variety), then from our perspective those questions are equivalent to asking if we can canonically associate to $\sf{Fuk}(M,\omega)$ a multi-polarization corresponding to an ample family on $X$ (cf. Theorem \ref{thm: divisorial reconstruction}), using purely symplecto-geometric data of $(M,\omega)$. In other words:
\begin{question}
    Is there a symplecto-geometric way of constructing a (multi)polarization $\tau$ on $\operatorname{\sf{Fuk}} (M, \omega)$ such that $\spec^\tau \operatorname{\sf{Fuk}}(M,\omega)$ is an algebraic mirror partner of $(M,\omega)$, possibly using additional data? How canonical is such a mirror partner? 
\end{question}

Indeed, in some cases, the answers are yes. In this section, we will follow the philosophy of the SYZ picture in the sense that Lagrangian sections of the torus fibration $M \to B$ are supposed to correspond to line bundles on (the compactification of) $X$ (cf. \cite{abouzaid2009morse}). Before looking into examples, let us mention relations with existing works.
\begin{itemize}
        \item Note that with our approach, we can explicitly construct algebraic mirror partners as pt-spectra, so our picture can lead to predictions of algebraic mirror partners given symplectic data. Note the construction of symplectic mirrors as certain spectra of algebraic mirrors is done for log Calabi-Yau surfaces in \cite{gross2015mirror}. We will discuss more on homological mirror symmetry for log Calabi-Yau surfaces later. 
        \item It is also natural to ask if we can geometrically construct a symmetric monoidal structure $\tens_M$ on $\sf{Fuk}(M,\omega)$ so that under homological mirror symmetry, we have 
        \[
        X \iso \spec_{\tens_X} \perf X \iso \spec_{\tens_M} \sf{Fuk}(M,\omega). 
        \] 
\end{itemize}
 Let us mention constructions of two such monoidal structures which are related to the context of polarizations. 
    \begin{example}\      
        \begin{enumerate}
            \item In \cite{subotic2010monoidal}*{\href{https://natebottman.github.io/docs/A_monoidal_structure_for_the_Fukaya_category.pdf}{Theorem 5.0.34.}}, a symmetric monoidal structure $\hat \tens_M$ is constructed on the generalized Donaldson-Fukaya category $\sf{Don}^\sharp(M)$ (c.f. \cite{subotic2010monoidal}*{\href{https://natebottman.github.io/docs/A_monoidal_structure_for_the_Fukaya_category.pdf}{Definition 4.0.27}}) for a smooth Lagrangian torus fibration $\pi: M \to B$ over a compact connected base, which restricts to fiberwise addition on the level of objects that are Lagrangian sections of the fibration. It is also shown that when $M$ is a $2$-torus, homological mirror symmetry (\cite{polishchuk1998categorical}) gives a monoidal equivalence $$(\perf X, \tens_X) \simeq (\sf{Don}^+(M), \hat \tens_M)$$ where $\sf{Don}^+(M)$ denotes the extended Donaldson-Fukaya category (\cite{subotic2010monoidal}*{\href{https://natebottman.github.io/docs/A_monoidal_structure_for_the_Fukaya_category.pdf}{Theorem 7.0.45.}}). It is recently announced in \cite{abouzaid2024focus} that the construction of this monoidal structure can be extended to singular cases, more precisely, to almost toric fibrations only with focus-focus singularities in the sense of \cite{symington2003four}*{\href{https://arxiv.org/pdf/math/0210033}{Definition 4.2, Definition 4.5.}}, while it seems to be still unknown if those monoidal structures correspond to $\tens_X$ on $\perf X$ under homological mirror symmetry. We will later mention when such a monoidal structure $\hat \tens_M$ can be expected to be at least geometrically equivalent (cf. Remark \ref{moduli}) to $\tens_X$ on $\perf X$ (Proposition \ref{prop: mirror partner as spectra} and Remark \ref{rem: mirror consequences} (iv)) under homological mirror symmetry, i.e., 
            \[
            X \iso \spec_{\tens_X}\perf X \iso \spec_{\hat \tens_M} \sf{Don}^+(M). 
            \]
            
            \item In \cite{hanlon2019monodromy}, Hanlon introduced the \textbf{monomially admissible Fukaya-Seidel category} $\cal F_\Delta(W_\Sigma)$ for the Landau-Ginzburg model $((\bb C^*)^n,W_\Sigma)$, where $W_\Sigma$ is the Hori-Vafa superpotential associated to a fan $\Sigma \subset \bb R^n$ defining a smooth proper toric variety $X_\Sigma$ and $\Delta$ is additional data called a monomial division associated to a fixed toric K\"ahler form on $(\bb C^*)^n$ and a fixed moment map $\mu:(\bb C^*)^n \to \bb R^n$ (\cite{hanlon2019monodromy}*{\href{https://arxiv.org/pdf/1809.06001v2}{Definition 2.1, Definition 3.18.}}). Roughly speaking, the objects are Lagrangians in $(\bb C^*)^n$ subject to admissibility conditions given by $\Delta$. Now, for the subcategory 
            \[
            \cal F^s_\Delta(W_\Sigma) \subset \cal F_\Delta(W_\Sigma)
            \]
            of Lagrangian sections of the torus fibration $\mu$, Hanlon constructed a symmetric monoidal structure $\tens_{\Delta, \Sigma}$ on the homotopy category $\ho(\cal F^s_\Delta (W_\Sigma))$ so that there is a symmetric monoidal equivalence
            \[
            (\operatorname{\sf{Lin}} X_\Sigma, \tens_{X_\Sigma}) \simeq (\ho(\cal F^s_\Delta (W_\Sigma)), \tens_{\Delta, \Sigma})
            \]
            where $(\operatorname{\sf{Lin}} X_\Sigma, \tens_{X_\Sigma})$ denotes the symmetric monoidal full subcategory of $(\perf X_\Sigma, \tens_{X_\Sigma})$ on line bundles \cite{hanlon2019monodromy}*{\href{https://arxiv.org/pdf/1809.06001v2}{Corollary 4.6.}}. Since $X_\Sigma$ is assumed to be smooth (in particular, divisorial) and hence $\operatorname{\sf{Lin}} X$ split-generates $\perf X = \D^b_\coh(X)$ by Proposition \ref{prop: amply family genereated perf}, any extension $\hat \tens_{\Delta,\Sigma}$ of $\tens_{\Delta,\Sigma}$ to $\D^\pi \cal F^s_\Delta(W_\Sigma)$ agrees with $\tens_X$ on objects under homological mirror symmetry, i.e., we have
            \[
            X_\Sigma  \iso \spec_{\tens_{X_\Sigma}}\perf X_{\Sigma} \iso \spec_{\hat \tens_{\Delta,\Sigma}} \D^\pi \cal F^s_\Delta (W_\Sigma). 
            \]
            In fact, the symmetric monoidal structure $\tens_{\Delta, \Sigma}$ on $\ho(\cal F^s_\Delta (W_\Sigma)) \simeq \operatorname{\sf{Lin}}X$ is constructed by identifying multi-polarizations on $\cal F_\Delta(W_\Sigma)$ corresponding to $\pic X$ under the equivalence, so it is indeed more direct to construct $X_\Sigma$ as a pt-spectrum instead of as a tt-spectra. \qedhere
        \end{enumerate}   
    \end{example}

In the rest of this section, we will consider spherical twists and some other classes of autoequivalences together with their corresponding pt-spectra. We will particularly focus on homological mirror symmetry for log Calabi-Yau surfaces as in \cite{hacking2021symplectomorphisms} and \cite{hacking2023homological}. Our main result is Proposition \ref{prop: mirror partner as spectra}, which reformulates results in \cite{hacking2021symplectomorphisms} and realizes algebraic mirror partner as pt-spectra of Fukaya categories. Readers with familiarity in \cite{hacking2021symplectomorphisms} and \cite{hacking2023homological} can safely skip to Proposition \ref{prop: reform hk}. 
\begin{notation}
{In the rest of this section, we will assume an algebraic mirror partner $X$ is smooth} so that homological mirror symmetry means an equivalence $\perf X = \D^b_\coh (X) \simeq \operatorname{\sf{Fuk}}(M,\omega)$ for simplicity. However, even without the assumption, we can actually generalize the arguments below to any (divisorial) variety $X$ by recalling $\perf X$ can be constructed from $\D^b_\coh(X)$ as locally finite homological functors on $\D^b_\coh(X)$ by \cite{neeman2022finite}*{\href{https://arxiv.org/pdf/2211.06587}{Theorem 6.6.(2)}} (or by \cite{rouquier2008dimensions}*{\href{https://www.math.ucla.edu/~rouquier/papers/dimension.pdf}{Corollary 7.51.}} if $X$ is moreover projective). Hence, in those cases, we can replace $\operatorname{\sf{Fuk}}(M,\omega)$ with the corresponding triangulated category $\operatorname{\sf{Fuk}}(M,\omega)^\vee$ of locally finite homological functors so that we have $\perf X \simeq \operatorname{\sf{Fuk}}(M,\omega)^\vee$. Indeed, by construction, any pt-equivalence $$(\D^b _\coh(X),\tau)\simeq (\operatorname{\sf{Fuk}}(M,\omega),\sigma)$$ restricts to a pt-equivalence \[(\perf X, \tau|_{\perf X}) \simeq (\operatorname{\sf{Fuk}}(M,\omega)^\vee,\sigma|_{\operatorname{\sf{Fuk}}(M,\omega)^\vee}).\qedhere\]
\end{notation}
Now, let us first begin with geometrically understanding some classes of spherical twists on $\sf{Fuk}(M,\omega)$. 
\begin{example}[Dehn twists and spherical twists] To a \textbf{framed exact Lagrangian sphere} $S$ in an exact symplectic manifold $(M,\omega)$ (i.e., an exact Lagrangian submanifold with a fixed diffeomorphism to a sphere), we can associate an automorphism $\tau_S$ of $(M,\omega)$, called the \textbf{Dehn twist} along $S$ (cf. e.g., \cite{seidel2008fukaya}*{(16c)}), which induces the autoequivalence ${\tau_S}_*$ on the Fukaya category $\operatorname{\sf{Fuk}}(M,\omega)$. In \cite{SeiTho01}, it is observed that a Lagrangian sphere is a spherical object in the the split-closed derived Fukaya category $\D^\pi \operatorname{\sf{Fuk}}(M,\omega)$ (\cite{SeiTho01}*{1c.}) and thus we can construct the associated spherical twists $T_S$. Now, by \cite{seidel2008fukaya}*{Corollary 17.17.}, we see that over $\D^\pi \operatorname{\sf{Fuk}}(M,\omega)$, the autoequivalence ${\tau_S}_*$ agrees with the spherical twist $T_S$ on objects and hence, independently of choices of a framing on $S$, we have
\[
\spec^{{\tau_S}_*}\D^\pi \operatorname{\sf{Fuk}}(M,\omega) = \spec^{T_S}\D^\pi \operatorname{\sf{Fuk}}(M,\omega). 
\]
On the other hand, since pt-spectra given by spherical twists can behave in various ways (cf. Example \ref{example: preoplarization in perf E} and Example \ref{example: spherical twist}), we cannot expect Dehn twists give a multi-polarization that constructs an algebraic mirror. In particular, note that over a variety of dimension $\geq 2$, spherical twists will never agree with tensoring with line bundles by combining \cite{ito2023gluing}*{\href{https://arxiv.org/abs/2309.08147}{Corollary 5.15.}} with the fact that skyscraper sheaves on a (smooth) variety of dimension $\geq 2$ is not spherical. Nevertheless, it will be interesting to understand the relation of the geometry of pt-spectra and Lagrangian spheres in the case of elliptic curves, where we already have explicit computations.
\end{example}
As we just observed above, we are interested in automorphisms of a symplectic manifold since they are supposed to induce autoequivalences of $\operatorname{\sf{Fuk}}(M,\omega)$. In \cite{hacking2021symplectomorphisms}, more classes of automorphisms, called {Lagrangian translations} and nodal slide recombinations, are defined and studied for Weinstein $4$-manifolds that are mirrors to log Calabi-Yau surfaces. Since Lagrangian translations correspond to tensoring with line bundles and thus are particularly relevant to us, let us explain them more in detail. 
\begin{definition}
    We say a smooth quasi-projective surface $U$ is a \textbf{log Calabi-Yau surface with maximal boundary} if there exist a smooth rational projective surface $Y$ and a singular nodal curve $D\in |-K_Y|$ such that $U = Y \setminus D$. In particular, $D$ needs to be either an irreducible rational nodal curve or a cycle of smooth rational curves. We may also refer to such a pair $(Y,D)$ as a log Calabi-Yau surface with maximal boundary. 
\end{definition}
\begin{example}
    A most basic example is when $Y$ is a smooth projective toric surface and $D$ is its torus boundary divisor consisting of a cycle of rational curves. Note in this case we have $U = Y\setminus D$ is a torus $\bb G_m^2$. On the other hand, when a smooth projective toric surface $Y$ contains a $(-2)$-curve $C$ and $- K_Y$ is nef (e.g., $Y = \bb F_2$), we can find an irreducible rational nodal curve $D \in |-K_Y|$ so that $C \subset Y\setminus D$ as follows. First, $-K_X$ is nef (and hence base point free on a toric surface), we can take distinct general smooth members $D_1,D_2 \in |-K_Y|$ not contained in $C$ (and hence disjoint from $C$ as $K_Y \cdot C =0$). If we consider the pencil formed by $D_1$ and $D_2$ (whose generic member is an elliptic curve by the adjunction formula for arithmetic genus) and blow up finitely many base points, we obtain a rational elliptic surface, as we may assume base points are simple and hence exceptional divisors give sections. Now, since a generic singular fiber $D$ is of type $I_1$ (i.e., an irreducible rational nodal curve as $g_a(D) = 1$), we obtain a desired rational curve $D \in |-K_Y|$ by taking the pencil general enough, as singular fibers are characterized by vanishing of the discriminant and a fiber of type $I_1$ corresponds to a simple zero (see \cite{schutt2019elliptic} for an exposition of (rational) elliptic surfaces). In particular, $(Y,D)$ is a log Calabi-Yau surface with maximal boundary and $Y\setminus D$ is not quasi-affine as it contains a $(-2)$-curve.
\end{example}
We also have more examples of non-quasi-affine log Calabi-Yau surfaces with maximal boundary. 
\begin{example} Let $Y$ be a rational elliptic surface with a section and a singular fiber (e.g., the blow-up of $\bb P^2$ at the $9$ base points of a cubic pencil). Note in this case $-K_X = F$ where $F$ is a fiber class. In particular, if $|F|$ has a Kodaira fiber $D$ of type $I_n$ ($n \geq 1$), then $(Y,D)$ is a log Calabi-Yau surface with maximal boundary. Also note $U = Y\setminus D$ is not quasi-affine in this case since $U$ contains other fibers. 
\end{example}
\begin{notation}
    Let $(Y,D)$ be a log Calabi-Yau surface with maximal boundary. Within its complex deformation class, there exists a distinguished class in the sense that it induces a split mixed Hodge structure. See \cite{hacking2023homological}*{\href{https://arxiv.org/pdf/2005.05010}{Section 2.2.}} for more details. In the sequel, we assume that a log Calabi-Yau surface with maximal boundary is equipped with this distinguished complex structure. 
\end{notation}
In this setting, we have the followinghomological mirror symmetry (\cite{hacking2023homological}*{\href{https://arxiv.org/pdf/2005.05010}{Theorem 1.1.}}). 
\begin{theorem}[Hacking-Keating]\label{thm: hk hms}
    Let $(Y,D)$ be a log Calabi-Yau surface with maximal boundary (and with distinguished complex structure). Then, there is a Weinstein $4$-fold $M$ together with a Lefschetz fibration $w:M \to \bb C$ such that there are compatible equivalences
    \[
    \perf D = \D^\pi \operatorname{\sf{Fuk}}(\Sigma) \quad \perf Y \simeq \D^b\sf{Fuk}(w)\quad \perf U \simeq \D^b \cal W\sf{Fuk}(M),
    \]
    where $\sf{Fuk}(\Sigma)$ denotes the Fukaya category of a smooth fiber $\Sigma$ of $w$ near infinity, $\sf{Fuk}(w)$ denotes the directed Fukaya category of $w$ and $\cal W\sf{Fuk}(M)$ denotes the wrapped Fukaya category of $M$. 
\end{theorem}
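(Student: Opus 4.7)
The plan is to construct the Weinstein $4$-fold $M$ together with its Lefschetz fibration $w$ explicitly from the combinatorial data of the pair $(Y,D)$, and then verify the three equivalences by exhibiting matching generating collections on each side. First I would reduce to a toric model: every log Calabi-Yau surface with maximal boundary arises from a toric pair $(\overline Y, \overline D)$ by a sequence of corner blow-ups at smooth points of the boundary, and under this reduction the distinguished complex structure is what allows us to track the Hodge-theoretic data across deformations. For the toric pair, the mirror $(M,w)$ is provided by a Hori-Vafa--type superpotential on $(\mathbb C^*)^2$, and one obtains the Lefschetz fibration $w:M\to\mathbb C$ by smoothing its critical locus. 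The general (non-toric) case is then obtained by performing nodal slides on the almost toric base of the toric mirror, which symplecto-geometrically realize the algebraic corner blow-ups and modify the configuration of vanishing thimbles without changing the diffeomorphism type of $M$.

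Next I would identify explicit generating sets and match morphism spaces. On the algebraic side, use a full strong exceptional collection on the rational surface $Y$, restrict it to $D$ to obtain a generating subcategory of $\perf D$, and extend it (by adding structure sheaves of components of $D$ inverted) to a generating set in $\perf U$. On the symplectic side, the Lefschetz thimbles of $w$ give a distinguished collection in the directed Fukaya category $\sf{Fuk}(w)$; their boundaries on a smooth fiber $\Sigma$ near infinity generate $\sf{Fuk}(\Sigma)$; and regarded as non-compact Lagrangians they generate $\cal W\sf{Fuk}(M)$. The heart of the matter is to compute the Floer-theoretic $A_\infty$-algebras on these generators (Hochschild complexes, products, higher operations) and to match them with the corresponding $\mathrm{Ext}$-algebras on the algebraic side. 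Once the full subcategories on generators are identified, each of the three equivalences follows by Morita-type arguments.

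Finally I would verify compatibility of the three equivalences. The restriction functor $\perf Y \to \perf D$ on the algebraic side must correspond to the functor $\sf{Fuk}(w)\to\sf{Fuk}(\Sigma)$ sending a thimble to its boundary cycle, and the inclusion $\perf Y \hookrightarrow \perf U$ must correspond to $\sf{Fuk}(w)\to\cal W\sf{Fuk}(M)$ forgetting directedness. Compatibility amounts to checking that these are the same as the functors induced on generators by the equivalences constructed in the previous step, which is a matter of unwinding the definitions once the generators are fixed.

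The main obstacle is the formality/rigidity step that controls the deformation from the toric mirror to the general log Calabi-Yau mirror. The nodal slide corresponds to a bulk deformation of the Fukaya category, and one must show that the resulting deformed $A_\infty$-structure is quasi-isomorphic to the Fukaya $A_\infty$-structure of the target mirror. This requires a careful Hochschild cohomology computation for the exceptional collection on $Y$ to rule out obstructions, analogous to Seidel's treatment for Fukaya categories of Lefschetz fibrations; matching the moduli of the deformation with the moduli of complex structures on $Y$ preserving $D$ is what pins down the equivalence uniquely, and is expected to be the most delicate part of the argument.
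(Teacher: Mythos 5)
This statement is not proved in the paper at all: it is quoted verbatim from Hacking--Keating (\cite{hacking2023homological}, Theorem 1.1) and used as a black box, so there is no internal argument to compare yours against. What you have written is therefore an attempt to reprove the cited theorem itself. Your outline does track the broad strategy of the actual Hacking--Keating proof (reduction to a toric model, Hori--Vafa superpotential, nodal slides on the almost toric base, matching distinguished collections of thimbles with exceptional collections, and a deformation argument pinned down by the distinguished complex structure), so as a road map it is pointed in the right direction.

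However, as a proof it has genuine gaps, and you have correctly located the main one without filling it: the identification of the bulk-deformed $A_\infty$-structure on the thimble algebra with the $\operatorname{Ext}$-algebra of the exceptional collection on $Y$ is exactly the hard content of the cited work. It is not a routine Hochschild computation; it requires identifying a versal family of $A_\infty$-structures and matching its base with the moduli of pairs $(Y,D)$, and this is where the ``distinguished complex structure'' hypothesis does real work. Two further steps are misstated. First, the functor $\perf Y \to \perf U$ is not an inclusion but a Verdier localization (restriction to the open complement, i.e., quotient by complexes supported on $D$), and its mirror is the localization of the directed Fukaya category of $w$ onto the wrapped category --- not ``forgetting directedness.'' Getting the compatibility of the three equivalences right depends on this localization framework. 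Second, $D$ is a singular nodal curve, so $\perf D$ is not generated by naively restricting an exceptional collection from $Y$; the equivalence $\perf D \simeq \D^\pi\operatorname{\sf{Fuk}}(\Sigma)$ rests on separate results on Fukaya categories of punctured surfaces (Lekili--Polishchuk), which your sketch does not invoke. Without these ingredients the proposal remains a program rather than a proof.
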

Now, let us recall constructions of Lagrangian translations introduced in \cite{hacking2021symplectomorphisms}.
\begin{construction}[Lagrangian translations]
    Suppose $M$ is an exact symplectic $4$-manifold that is the total space of an almost toric fibration $$\pi:M \to B$$ only with focus-focus singularities (cf. e.g., \cite{symington2003four}*{\href{https://arxiv.org/pdf/math/0210033}{Definition 4.2.}}), where $B$ is an integral affine manifold homeomorphic to a disk. Further suppose there is a fixed Lagrangian section $L_0$ for $\pi$. Those settings are applicable when $M$ is a mirror to a log Calabi-Yau surface. Now, for the smooth locus $M^\sf{sm}$ of $\pi$, there are so called global action-angle coordinates $$\fr p_{L_0}:T^*B \to M^\sf{sm}$$ so that over the zero section of $T^*B$, it functions as the fixed Lagrangian section $L_0$. Roughly speaking, $\fr p_{L_0}$ takes $(q,p) \in T^*B$ to a point in $M^\sf{sm}$ given by transporting $L_0(q) \in M$ along the time-one Hamiltonian flow prescribed by $p \in T_q^*B$. Now, for a Lagrangian section $L:B \to M$, we define 
    \[
    \sigma_{L}:M^\sf{sm} \to M^\sf{sm}, \quad x \mapsto \fr p_{L_0}\l(\tilde x + ({L({\pi(x))}})^\sim\r),
    \]
    where $\tilde y \in T^*B$ denote any preimage of $y$ under $\fr p_{L_0}$. Intuitively, we are adding $L - L_0$ at each fiber using the linear structure. By \cite{hacking2021symplectomorphisms}*{\href{https://arxiv.org/pdf/2112.06797}{Proposition 4.11.}}, $\sigma_L$ is well-defined and extends to a symplectomorphism of $M$, which is called the \textbf{Lagrangian translation} by $L$ (with respect to $L_0$).      
\end{construction}
The following are the main results in \cite{hacking2021symplectomorphisms}*{\href{https://arxiv.org/pdf/2112.06797}{Corollary 4.21, Theorem 5.5}}.
\begin{theorem}[Hacking-Keating]\label{thm: hk lag tran}
    Let $(Y,D)$ be a log Calabi-Yau surface with maximal boundary and set $U:= Y\setminus D$. Define the subgroup $Q:= \{\ecal L \in \pic Y \mid \ecal L|_D \iso \ecal O_D\} \subset \pic Y$. Fix a toric model of $(Y,D)$ and consider the corresponding almost toric fibration $\pi:M \to B$ for the mirror partner $(M,w)$ (cf. \cite{hacking2023homological}*{\href{https://arxiv.org/pdf/2005.05010}{Section 1.1.}}). Further, fix a Lagrangian section $L_0$ of $\pi$. Then, the following assertions hold.
    \begin{enumerate}
        \item There exists a canonical diffeomorphism $\iota: U \to M$ (cf. \cite{hacking2021symplectomorphisms}*{\href{https://arxiv.org/pdf/2112.06797}{Lemma 4.20}}) that induces bijections
        \begin{center}\DisableQuotes
            \begin{tikzcd}
\text{$\pic(U)$} \arrow[r, "\sim"]  & \l\{\parbox{7cm}{Lagrangian sections of $\pi$ up to fiber-preserving Hamiltonian isotopy}\r\} =:\sf{Lag}_\pi               \\
Q \arrow[r, "\sim"]                   & \l\{\parbox{7cm}{Lagrangian sections of $\pi$ equal to $L_0$ near $\partial M$ up to fiber preserving Hamiltonian isotopy}\r\}=: \sf{Lag}_{\pi,L_0} 
\end{tikzcd}
        \end{center}
\item Take a line bundle $\ecal L \in Q$ and the corresponding Lagrangian section $L \in \sf{Lag}_{\pi,L_0}$ as above. Then, $\sigma_L$ induces an autoequivalence ${\sigma_L}_*$ of $\D^b\cal W\sf{Fuk}(M)$ and under homological mirror symmetry (Theorem \ref{thm: hk hms}), $\tau_{\ecal L|_U}$ corresponds to $\sigma_L$, i.e., we have a pt-equivalence
\[
(\perf U,\tau_{\ecal L|_U}) \simeq (\D^b \cal W\sf{Fuk}(M),{\sigma_L}_*). \qedhere
\]
\end{enumerate}
\end{theorem}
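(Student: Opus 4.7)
My plan is to address (i) via an SYZ-type dictionary between Lagrangian sections and line bundles, and then deduce (ii) by computing the action of $\sigma_L$ on sections and transporting via the Hacking-Keating mirror functor from Theorem \ref{thm: hk hms}.

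For part (i), I would construct the diffeomorphism $\iota: U \to M$ directly from the chosen toric model of $(Y,D)$: the almost toric fibration $\pi: M \to B$ is built so that $B$ recovers the moment polytope data of the toric model and the focus-focus singularities encode the nodal slides that cut out $U$ from its toric degeneration, giving a canonical identification at the smooth-manifold level. For the first bijection, I would use global action-angle coordinates $\fr p_{L_0}: T^*B \to M^\sf{sm}$: away from singular fibers, a Lagrangian section is locally the image under $\fr p_{L_0}$ of the graph of a closed $1$-form on $B^\sf{sm}$, and its class modulo fiberwise Hamiltonian isotopy is determined by the de Rham class of that $1$-form together with compatibility data around the focus-focus points. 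Matching this classification with the Chern-class description of $\pic U$ yields $\pic U \iso \sf{Lag}_\pi$. The restricted bijection $Q \iso \sf{Lag}_{\pi,L_0}$ is obtained by intersecting with sections trivial near $\partial M$, which corresponds to $\ecal L|_D \iso \ecal O_D$ under this dictionary.

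For part (ii), since $\sigma_L$ is a symplectomorphism by \cite{hacking2021symplectomorphisms}*{\href{https://arxiv.org/pdf/2112.06797}{Proposition 4.11}}, it functorially induces an autoequivalence ${\sigma_L}_*$ of $\D^b \cal W\sf{Fuk}(M)$. To identify this autoequivalence with $\tau_{\ecal L|_U}$, I would first compute its action on the distinguished class of objects, namely Lagrangian sections: by definition, $\sigma_L$ translates fiberwise by $L - L_0$ in action-angle coordinates, so $\sigma_L(L') \simeq L + L' - L_0$ up to fiberwise Hamiltonian isotopy for any $L' \in \sf{Lag}_\pi$. Under the bijection of part (i), fiberwise addition of sections is exactly the group operation on $\pic U$, hence corresponds to $\ecal L' \mapsto \ecal L \tens \ecal L'$. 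Since Lagrangian sections split-generate $\D^b \cal W\sf{Fuk}(M) \simeq \perf U$ for $U$ divisorial (Proposition \ref{prop: amply family genereated perf}), this object-level matching combined with naturality of the Hacking-Keating mirror functor yields a natural isomorphism $\Phi \circ \tau_{\ecal L|_U} \iso {\sigma_L}_* \circ \Phi$, i.e., the desired pt-equivalence.

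The main obstacle is promoting the object-level identification on sections to a genuine natural isomorphism of triangulated functors. This requires matching Floer-theoretic morphism data on the symplectic side with $\ext$-groups on the algebraic side, in a way compatible with both autoequivalences simultaneously. Two technical points make this delicate: first, the wrapped setting and non-compactness force careful bookkeeping of Hamiltonian isotopy classes near $\partial M$; second, the focus-focus singularities of $\pi$ are precisely where the simple action-angle picture breaks down, so checking that $\sigma_L$ extends across them compatibly with the mirror correspondence (and not just as an abstract symplectomorphism) is where \cite{hacking2021symplectomorphisms} invests the bulk of its work.
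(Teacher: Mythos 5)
This statement is not proved in the paper at all: it is an imported result, attributed explicitly to Hacking--Keating and cited as \cite{hacking2021symplectomorphisms}*{Corollary 4.21, Theorem 5.5} (with part (i) resting on their Lemma 4.20). So there is no internal proof to compare against; the paper's ``proof'' is the citation. Your proposal is therefore being measured against the original Hacking--Keating argument, and as a reconstruction of their strategy it is directionally right --- the SYZ dictionary between Lagrangian sections and line bundles via action-angle coordinates, the fiberwise-addition description of $\sigma_L$ on sections, and the transport through the mirror functor of Theorem \ref{thm: hk hms} are indeed the skeleton of their proof.

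However, as a standalone proof it has genuine gaps, which you partly acknowledge but do not close. First, in part (i) the classification of sections by de Rham classes of closed $1$-forms is the story for \emph{smooth} torus fibrations; the content of Hacking--Keating's Lemma 4.20 and Corollary 4.21 is precisely how to handle the focus-focus fibers and why the resulting group is $\pic(U)$ (respectively $Q$ for sections standard near $\partial M$), and ``compatibility data around the focus-focus points'' is a placeholder, not an argument. Second, and more seriously, in part (ii) the step from the object-level identity $\sigma_L(L')\simeq L + L' - L_0$ on a split-generating collection of sections to a natural isomorphism $\Phi\circ\tau_{\ecal L|_U}\iso{\sigma_L}_*\circ\Phi$ of triangulated functors does not follow formally: two autoequivalences agreeing on objects of a generating set need not be naturally isomorphic, and one must match the Floer-theoretic morphism-level data (including the wrapping and the behavior near $\partial M$) with the algebraic side. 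This is where the bulk of \cite{hacking2021symplectomorphisms}*{Theorem 5.5} lives, and your proposal defers it rather than supplying it. In short: correct outline, but the essential technical content remains cited rather than proven, which is consistent with how the paper itself treats the statement.
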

\begin{remark}\label{rem: compactification and serre functor} Use the same notations as in Theorem \ref{thm: hk lag tran}. The action of $Q$ on $\perf U$ only depends on the image $\bar Q \subset \pic U$ of $Q$ under the restriction $\pic Y \to \pic U$. We have $Q \iso \bar Q$ if and only if $D$ is negative definite or indefinite, i.e., the intersection form associated to $D_i$ is negative definite or indefinite when we write $D = D_1 + \cdots + D_n$ where $D_i$ are irreducible components that are necessarily rational curves (cf. \cite{hacking2021symplectomorphisms}*{\href{https://arxiv.org/pdf/2112.06797}{Section 2.3.}} for more discussions). In such a case, by \cite{hacking2021symplectomorphisms}*{\href{https://arxiv.org/pdf/2112.06797}{Remark 5.6.}}, we have a pt-equivalence 
    \[
    (\perf Y, \tau_\ecal L) \simeq (\D^b \sf{Fuk}(w), {\sigma_L}_*). \qedhere
    \]
\end{remark}
As a consequence, we get the following result. 
\begin{prop}\label{prop: reform hk}
    Use the same notations as in Theorem \ref{thm: hk lag tran}. Then, the diffeomorphism $\iota:U \to M$ and homological mirror symmetry induces a multi-pt-equivalence
    \[
    (\perf U, \tau_Q) \simeq (\D^b \cal W\sf{Fuk}(M), \tau_{\pi,\partial}), 
    \]
    where we set the multi-polarizations $\tau_Q$ and $\tau_{\pi,\partial}$ to be $$\tau_Q:= \{\tau_{\ecal L|_U} \mid \ecal L \in Q\} \subset \auteq \perf U, \quad \tau_{\pi,\partial}:= \{{\sigma_L}_*\mid L \in \sf{Lag}_{\pi,L_0}\} \subset \auteq \D^b\cal W\sf{Fuk}(M).$$ 
    In particular, the multi-polarization $\tau_{\pi,\partial}$ does not depend on the choice of $L_0$. 
\end{prop}
\begin{proof}
By Theorem \ref{thm: hk lag tran}, homological mirror symmetry induces the following commutative diagram:
\begin{center}\DisableQuotes
\begin{tikzcd}
Q \arrow[d, "\cong"'] \arrow[r, two heads]    & \tau_Q \arrow[r, hook] \arrow[d]      & \auteq \perf U \arrow[d, "\cong"] \\
{\mathsf{Lag}_{\pi,L_0}} \arrow[r, two heads] & {\tau_{\pi,\partial}} \arrow[r, hook] & \auteq \D^b\cal W\sf{Fuk}(M)     
\end{tikzcd}
\end{center}
which proves the first claim. Since homological mirror symmetry functor does not depend on the choice of $L_0$, the multi-polarization $\tau_{\pi,\partial}$ is determined as the image of $\tau_Q$ under homological mirror symmetry and does not depend on the choice of $L_0$. 
\end{proof}
As a further consequence, we can obtain a construction of a mirror algebraic partner as a pt-spectrum, which may be viewed as a partial converse of the main result of \cite{gross2015mirror}.
\begin{prop}\label{prop: mirror partner as spectra}
    With the same notations as in Theorem \ref{thm: hk lag tran}, we have
    \[
    U \iso  \spec^{\tau_{\pi,\partial}} \D^b \cal W\sf{Fuk}(M).
    \]
    Moreover, if there exists an $M$-compatible tt-structure $\tens_M$ on $\D^b \cal W\sf{Fuk}(M)$ such that its unit $\bb 1_M$ corresponds to the structure sheaf $\ecal O_U$ under homological mirror symmetry and that for any $L,L' \in \sf{Lag}_{\pi,L_0}$, we have 
    \[
    \sigma_L(\sigma_{L'}(\bb 1_M)) \iso \sigma_L(\bb 1_M) \tens_M \sigma_{L'}(\bb 1_M),
    \]
    then we have 
    \[
    U \iso  \spec^{\tau_{\pi,\partial}} \D^b \cal W\sf{Fuk}(M)\iso \spec_{\tens_M}\D^b \cal W\sf{Fuk}(M). \qedhere
    \]
\end{prop}
\begin{proof}
    First, we claim $\bar Q$ contains an ample line bundle. Indeed, take a line bundle $\ecal L$ on $Y$ that restricts to an ample line bundle on $U$ (which exists as the restriction $\pic Y \to \pic U$ is surjective). If $\ecal L|_D$ is not trivial on some irreducible components of the boundary divisor, then we can twist by divisors corresponding to such components so that the restriction to $D$ is trivial without changing the restriction to $U$. So, we may assume $\ecal L \in Q$ and hence we have an ample line bundle $\ecal L|_U \in \bar Q$. Now, since we have $\spec_{\tens_U} \perf U \subset \spec^{\tau_\ecal L} \perf U$ for any $\tau_{\ecal L} \in \tau_Q \subset \pic U$ and $\bar Q$ contains an ample line bundle on $U$, we get
    \[
    U \iso \spec^{\tau_Q} \perf U \iso \spec^{\tau_{\pi,\partial}} \D^b \cal W\sf{Fuk}(M).
    \]
    Moreover, since under homological mirror symmetry, we see that $\bra{\sigma (\bb 1_M)\mid \sigma \in \tau_{\pi,\partial}} = \D^\pi\cal W \sf{Fuk(M)}$ and hence by the same argument as in Theorem \ref{thm: divisorial reconstruction}, we have 
    \[\spec^{\tau_{\pi,\partial}} \D^b \cal W\sf{Fuk}(M)\iso \spec_{\tens_M}\D^b \cal W\sf{Fuk}(M).\qedhere \]
\end{proof}

\begin{remark}\label{rem: mirror consequences}
    Use the same notations as in Theorem \ref{thm: hk lag tran}.
    \begin{enumerate}
        \item If $U$ is quasi-affine, then we already have $\spec_\vartriangle \D^b \cal W\sf{Fuk}(M) \iso U$ by \cite{matsui2023triangular}*{\href{https://arxiv.org/pdf/2301.03168}{Corollary 4.7.}}. However, as we have seen in examples, there are plenty of cases where $U$ is not quasi-affine. 
        \item Recall from Remark \ref{rem: compactification and serre functor} (cf. \cite{hacking2021symplectomorphisms}*{\href{https://arxiv.org/pdf/2112.06797}{Remark 5.6, Remark 5.7.}})) that if $D$ is negative-definite or indefinite, then for any $\ecal L \in Q$ and the corresponding $L \in \sf{Lag}_{\pi,L_0}$, we have a pt-equivalence 
    \[
    (\perf Y, \tau_\ecal L) \simeq (\D^b \sf{Fuk}(w), {\sigma_L}_*). 
    \] Thus, we know that we have
    \[
    Y \iso \spec_{\tens_Y^\bb L}\perf Y \underset{\text{open}}{\subset} \spec^{\tau_Q} \perf Y \iso \spec^{\tau_{\pi,\partial}} \D^b  \sf{Fuk}(w),
    \]
    where $\tau_Q$ and $\tau_{\pi,\partial}$ are naturally extended to $\perf Y$ and $\D^b \cal W\sf{Fuk}(w)$. A natural question is if we can add more polarizations to $\tau_{\pi,\partial}$ to cut out $Y$ while only looking at the symplectic side. Note, by passing to $\D^b \sf{Fuk}(w) \simeq \perf Y$, we "compactify" $\D^b \cal W\sf{Fuk}(M)\simeq \perf U$ so that we have the Serre functor $\bb S_w = \bb S_Y$. In particular, we have a multi-polarization $\bar \tau_{\pi,\partial}:= \tau_{\pi,\partial}\cup \{\bb S_w\}$ on $\D^b  \sf{Fuk}(w)$. If an algebraic mirror $Y$ is a toric surface (or more generally a surface of general type), we conjecture $$Y \iso  \spec^{\bar \tau_{\pi,\partial}} \D^b  \sf{Fuk}(w)$$ 
    (if and) only if $\omega_Y = \ecal O_Y(-D)$ is pt-ample. In particular, unless the compactification is nice, we expect to need more polarizations to cut out the Balmer spectrum.  
        \item In general, suppose we have an $M$-compatible tensor triangulated structure $\tens_M$ on the Fukaya-like category constructed from an almost torus fibration. We expect we can apply similar strategies as above to show $\tens_M$ is geometrically equivalent to the tt-strctures $\tens_X^\bb L$ of its algebraic mirror $X$ if there are enough Lagrangian sections (so that they correspond to an ample family) and tensor product of those sections are given by certain modifications of fiber-wise additions. \qedhere
    \end{enumerate}
\end{remark}
Let us finish this subsection with plenty of natural questions arising from our observations. 
\begin{question}
    Let $M$ be a Weinstein $4$-fold and suppose there is an almost-torus fibration $\pi:M \to B$ with a fixed Lagrangian section $L_0$. 
    \begin{enumerate} 
        \item Can we characterize an almost toric fibration $\pi':M \to B$ for which we have an isomorphism 
        \[
        \spec^{\tau_{\pi,\partial}} \D^b \cal W\sf{Fuk}(M) \iso \spec^{\tau_{\pi',\partial}} \D^b \cal W\sf{Fuk}(M)?
        \]
        More generally, given a multi-polarization $\pic(X)$ on $\perf X \simeq \sf{Fuk}(M)$, can we construct the corresponding Lagrangian fibration of $M$? 
        \item If $\spec^{\tau_{\pi,\partial}} \D^b \cal W\sf{Fuk}(M)$ is a log Calabi-Yau surface with maximal boundary, then is it a mirror partner of $M$? In other words, can we predict mirror partners of $M$ using pt-spectra? 
    \item Can we give symplecto-geometric formulae for prime thick subcategories (or even prime $\tens$-ideals) in $\spec^{\tau_{\pi,\partial}} \D^b \cal W\sf{Fuk}(M)$?
    \item Using such formulae, can we construct $\spec^{\tau_{\pi,\partial}} \D^b \cal W\sf{Fuk}(M)$ more directly from $M$ without fully going through the wrapped Fukaya category? 
    \end{enumerate}
\end{question}

\appendix
\section{Functoriality of spectra and topological lattices}\label{append}
In Remark \ref{rem: not functorial}, we mentioned that the association taking pt-categories to pt-spectra does not define a functor due to lack of well-definedness on morphisms. To remedy this issue, the following is a key observation. 
\begin{lemma}\label{Appendix: key lemma}
    Let $F:(\cal T_1,\tau_1) \to (\cal T_2,\tau_2)$ be a pt-functor. Then, a map
    \[
    F\inv: \Th^{\tau_2} \cal T_2 \to \Th^{\tau_1} \cal T_1,\quad \cal I \mapsto F\inv(\cal I)
    \]
    is well-defined, continuous (with respect to the subspace topology of the Balmer topology) and preserving inclusions of categories. 
\end{lemma}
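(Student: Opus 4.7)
The plan is to verify the three assertions (well-definedness, poset-preservation, continuity) in turn. The only non-formal ingredient is the natural isomorphism $F \circ \tau_1 \iso \tau_2 \circ F$ coming from the pt-functor structure; everything else should be routine.

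For well-definedness, I would first observe that for any triangulated functor $F$, the preimage $F\inv(\cal I) = \{M \in \cal T_1 \mid F(M) \in \cal I\}$ of a thick subcategory is again thick: closure under shifts and cones follows from exactness of $F$, and closure under direct summands follows from the same property of $\cal I$. Then I would check that $F\inv(\cal I)$ lies in $\Th^{\tau_1}\cal T_1$. Unwinding definitions,
\[
\tau_1\inv(F\inv(\cal I)) = \{M \in \cal T_1 \mid F(\tau_1(M)) \in \cal I\},
\]
and using $F \circ \tau_1 \iso \tau_2 \circ F$ together with the fact that $\cal I$ is closed under isomorphism, this equals $\{M \mid \tau_2(F(M)) \in \cal I\} = F\inv(\tau_2\inv(\cal I)) = F\inv(\cal I)$, by the hypothesis $\tau_2\inv(\cal I) = \cal I$. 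The poset claim $\cal I \subset \cal J \Rightarrow F\inv(\cal I) \subset F\inv(\cal J)$ is immediate from the definition of preimage.

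For continuity, I would use that the Balmer topology on $\Th \cal T_i$ has a closed basis given by the sets $V(\cal E) = \{\cal I \mid \cal I \cap \cal E = \emp\}$ for collections $\cal E \subset \ob \cal T_i$, so that the subspace topology on $\Th^{\tau_i}\cal T_i$ has closed basis $V(\cal E) \cap \Th^{\tau_i}\cal T_i$. A direct computation yields
\[
(F\inv)\inv\!\bigl(V(\cal E) \cap \Th^{\tau_1}\cal T_1\bigr) = \{\cal J \in \Th^{\tau_2}\cal T_2 \mid \cal J \cap F(\cal E) = \emp\} = V(F(\cal E)) \cap \Th^{\tau_2}\cal T_2,
\]
which is closed.

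No step looks like a serious obstacle; the only conceptual content is the identity $\tau_1\inv \circ F\inv = F\inv \circ \tau_2\inv$ of operations on thick subcategories, which is precisely what the pt-functor axiom is designed to encode. If one later wished to upgrade this to a morphism of ringed spaces $\spec^{\tau_1}\cal T_1 \to \spec^{\tau_2}\cal T_2$ on a suitable subspace (as hinted at in Remark \ref{rem: not functorial}), the same computation would be the topological input, and the structure-sheaf compatibility would follow from the strictly commutative diagrams of Construction \ref{construction:restriction maps} applied to $F$.
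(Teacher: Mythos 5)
Your proposal is correct and follows essentially the same route as the paper: the key identity $\tau_1\inv\circ F\inv = F\inv\circ\tau_2\inv$ for well-definedness, and the computation $(F\inv)\inv(V(\ecal E)) = V(F(\ecal E))$ for continuity, with the poset claim being immediate. The only difference is that you spell out two points the paper leaves implicit (that preimages of thick subcategories under triangulated functors are thick, and that the natural isomorphism $F\circ\tau_1\iso\tau_2\circ F$ plus isomorphism-closure of $\cal I$ justifies the key identity), which is fine.
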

\begin{proof}
    First, for $\cal I \in \Th^{\tau_2} \cal T_2$, we have $F\inv(\cal I) \in \Th^{\tau_1} \cal T_1$ since 
    \[
        \tau_1\inv (F\inv(\cal I))=  F\inv(\tau_2\inv(\cal I)) =  F\inv(\cal I). 
        \]
        Hence, $F \inv$ is well-defined. To show continuity, take a collection $\ecal E$ of objects in $\cal T_1$. Then, we have
            \[
            (F\inv)\inv(V(\ecal E)) = \{\cal I \in \Th \cal T_2 \mid F\inv(\cal I) \cap \ecal E = \emp\}= \{\cal I \in \Th \cal T_2 \mid \cal I \cap F(\ecal E) = \emp\} = V(F(\ecal E)),
            \]
            i.e., $F\inv:\Th \cal T_2 \to \Th \cal T_1$ is continuous, which restricts to a continuous map $\Th^{\tau_2} \cal T_2 \to \Th^{\tau_1} \cal T_1$ since we put the subspace topology on $\Th^{\tau_i}\cal T_i$. The fact that $F\inv$ preserves inclusions is clear. 
\end{proof}
From here, there are essentially two approaches; one is to limit possible functors between pt-categories and the other is to modify pt-spectra. We will briefly discuss the former below and then move on to the latter.
\begin{definition}
    A pt-functor $F:(\cal T_1, \tau_1) \to (\cal T_2, \tau_2)$ is said to be \textbf{geometric} if the map
        \[
        \spc(F):\spc^{\tau_2}\cal T_2 \to \spc^{\tau_1} \cal T_1, \quad \cal P \mapsto F\inv(\cal P)
        \]
        is well-defined (in which case it is necessarily continuous). By Lemma \ref{Appendix: key lemma}, $F$ is geometric if and only if for any $\cal P \in \spc^{\tau_2} \cal T_2$, we have $F\inv(\cal P) \in \spc_\vartriangle \cal T_1$.
        
        By definition, we get a functor
        \[
        \spc:\sf{ptCat_\sf{geom}}^\op \to \sf{Top},
        \]
        where $\sf{ptCat_\sf{geom}}$ denotes the category of pt-categories with geometric pt-functors and $\sf{Top}$ denotes the category of topological spaces.
\end{definition}
\begin{example}
    If we have tt-categories $(\cal T_i,\tens_i)$ for $i=1,2$ with a $\tens$-ample object $A_1 \in \cal T_1$, then any essentially surjective tt-functor $F$ between $(\cal T_i,\tens_i)$ gives rise to a geometric pt-functor $F: (\cal T_1, - \otimes_1 A_1) \to (\cal T_2, - \otimes_2 F(A_1))$ as $F(A_1)$ is $\tens$-ample, but not the other way around in general. For example, if we take $\cal T_1 = \cal T_2 = \perf X$ for a smooth projective variety $X$ and take an ample line bundle $\ecal A$ and any line bundle $\ecal L$ on $X$, then $\tau_\ecal L:(\perf X,\tau_\ecal A)\simeq (\perf X, \tau_\ecal A)$ is a pt-equivalence but it is not a tt-functor with respect to $\tens_{\ecal O_X}^\bb L$.
\end{example}
Now, let us move on to modify pt-spectra to ensure functoriality while retaining enough information so that we can recover the original pt-spectra in certain cases of our interest. First, let us introduce the following notion as a target category. 
\begin{definition} Define a \textbf{topological complete lattice} to be a complete lattice $(L,\leq)$ together with topology defined on the set $L$, where no compatibility is required. Let 
\[
\sf{TopCLat}_\sf{meet}
\]
denote the category of topological complete lattices whose morphisms are poset maps that preserve arbitrary meets and that are continuous.  
\end{definition}
\begin{remark}
    Since a lattice can already be thought of as a generalization of topological spaces, a topological (complete) lattice can be thought of as essentially having two different topologies. Compatibility of two topologies indicates certain geometricity. See Remark \ref{rem: Hochster}. 
\end{remark}
\begin{construction}
    Let $\sf{ptCat}$ denote the category of pt-categories with pt-functors. Define the \textbf{thick spectrum functor} to be a functor
        \[
        \Th:\sf{ptCat}^\op \to \sf{TopCLat}_\sf{meet}
        \]
        sending a pt-category $(\cal T, \tau)$ to the sublattice $\Th^\tau \cal T \subset \Th \cal T$ of fixed points by $\tau$ equipped with the subspace topology of the Balmer topology on $\Th \cal T$ (cf. Notation \ref{notationa: prelim} (i) and (viii)) and sending a pt-functor $F:(\cal T_1,\tau_1) \to (\cal T_2,\tau_2)$ to the continuous poset map $F\inv:\Th^{\tau_2} \cal T_2 \to \Th^{\tau_1} \cal T_1$ (cf. Lemma \ref{Appendix: key lemma}). For well-definedness, we need to note the following two facts:
        \begin{itemize}
            \item For any non-empty subset $S \subset \Th^\tau \cal T$, we see that $\bigcap_{\cal P \in S} \cal P$ is a thick subcategory and that
            \[
            \tau\l(\bigcap_{\cal P \in S} \cal P\r) = \bigcap_{\cal P \in S} \tau(\cal P) = \bigcap_{\cal P \in S} \cal P,
            \]
            i.e., $\bigcap_{\cal P \in S} \cal P \in \Th^\tau \cal T$. Note we also have $\cal T \in \Th^\tau \cal T$, which is a unique largest element, i.e., the empty meet. Since a poset that admits an arbitrary meet is a complete lattice, we have $\Th^\tau \cal T \in \sf{TopCLat}_\sf{meet}$. 
            \item For any subset $S\subset \Th^{\tau_2}\cal T_2$, we have
            \[
            F\inv\l(\bigcap_{\cal P\in S}\cal P\r) = \bigcap_{\cal P \in S} F\inv(\cal P)
            \]
            and hence $F\inv$ is meet-preserving, i.e., we have shown \[F \inv \in \hom_{\sf{TopCLat}_{\sf{meet}}}(\Th^{\tau_2}\cal T_2,\Th^{\tau_1}\cal T_1). \qedhere\]
        \end{itemize}
\end{construction}
Note that for pt-categories of algebro-geometric interests, we can indeed recover the pt-spectrum $\spc^\tau \cal T$ from the topological complete lattice structure of $\Th^\tau \cal T$ by the following result, so we are not losing too much information by considering thick spectra in such cases.
\begin{prop}
    Let $X$ be a noetherian scheme and let $\ecal L$ be a $\tens$-ample line bundle. Then, we have
    \[
    \spc^{\tau_\ecal L} \perf X = \{\cal P \in \Th^{\tau_\ecal L} \perf X \mid \textrm{the poset $\{\cal Q \in \Th^{\tau_\ecal L} \cal T \mid \cal P \subsetneq \cal Q\}$ has a unique smallest element}\} 
    \]
    where we put the subspace topology from $\Th^{\tau_\ecal L} \perf X$ on the right hand side.  
\end{prop}
\begin{proof}
    By the same arguments as Proposition \ref{example:guiding}, $\Th^{\tau_\ecal L} \perf X$ agrees with the lattice of radical thick $\tens$-ideals of $(\perf X, \tens_X)$, noting $(\perf X, \tens_X)$ is a rigid tt-category (cf. \cite{Balmer_2005}*{\href{https://www.math.ucla.edu/~balmer/Pubfile/Spectrum.pdf}{Proposition 4.4.}}). Now, by \cite{Matsui_2021}*{Proposition 4.7}, the right hand side agrees with the Balmer spectrum and hence with the pt-spectrum $\spc^{\tau_\ecal L}\perf X$ as sets and their topologies agree as both of them are the subspace topology from $\Th \perf X$. 
\end{proof}
Note exactly the same proof works for any rigid tt-category with Noetherian tt-spectrum and $\tens$-ample objects. A priori the right hand side may differ from the pt-spectrum in general and may be easier to compute as it can be thought of as one variant of relative Matsui spectra as discussed in \cite{matsukawa2025spectrum}. 
\begin{remark}  \label{rem: Hochster}
For a noetherian scheme $X$ with a $\tens$-ample line bundle $\ecal L$, note that by the Hochster duality it is enough to remember the lattice structure on $\Th^{\tau_\ecal L}\perf X$ in order to recover the Balmer topology on $\spec^{\tau_\ecal L} \perf X$ (cf. \cite{kock2017hochster}), but in general we expect the Balmer topology on $\Th^\tau \cal T$ is extra information.

It is also natural to ask how much the converse holds. Namely, if topology on $\spc^{\tau_\ecal L} \perf X$ constructed by the Hochster duality, if applicable, agrees with the Balmer topology, then is it isomorphic to (a Fourier--Mukai partner of) $X$?
\end{remark}

\bibliography{bib}

@article{matsukawa2025spectrum,
  title={The Spectrum of Stable Infinity Categories with Actions},
  author={Matsukawa, Hisato},
  journal={arXiv preprint arXiv:2505.02724},
  year={2025}
}

@article{thomason1997classification,
  title={The classification of triangulated subcategories},
  author={Thomason, Robert W},
  journal={Compositio Mathematica},
  volume={105},
  number={1},
  pages={1--27},
  year={1997},
  publisher={London Mathematical Society}
}

@article{shklyarov2007serre,
  title={On Serre duality for compact homologically smooth DG algebras},
  author={Shklyarov, Dmytro},
  journal={arXiv preprint math/0702590},
  year={2007}
}

@article{halpern2023noncommutative,
  title={The noncommutative minimal model program},
  author={Halpern-Leistner, Daniel},
  journal={arXiv preprint arXiv:2301.13168},
  year={2023}
}

@article{kuznetsov2021serre,
  title={Serre functors and dimensions of residual categories},
  author={Kuznetsov, Alexander and Perry, Alexander},
  journal={arXiv preprint arXiv:2109.02026},
  year={2021}
}

@book{Verdier,
     author = {Verdier, Jean-Louis},
     editor = {Maltisiniotis Georges},
     title = {Des cat\'egories d\'eriv\'ees des cat\'egories ab\'eliennes},
     series = {Ast\'erisque},
     publisher = {Soci\'et\'e math\'ematique de France},
     number = {239},
     year = {1996},
     mrnumber = {1453167},
     zbl = {0882.18010},
     language = {fr},
     url = {http://www.numdam.org/item/AST_1996__239__R1_0/}
}

@article{kock2017hochster,
  title={Hochster duality in derived categories and point-free reconstruction of schemes},
  author={Kock, Joachim and Pitsch, Wolfgang},
  journal={Transactions of the American Mathematical Society},
  volume={369},
  number={1},
  pages={223--261},
  year={2017}
}

@article{bondal_orlov_2001,
  title={Reconstruction of a variety from the derived category and groups of autoequivalences},
  author={Bondal, Alexei and Orlov, Dmitri},
  journal={Compositio Mathematica},
  volume={125},
  number={3},
  pages={327--344},
  year={2001},
  publisher={London Mathematical Society}
}

@article{rouquier2008dimensions,
  title={Dimensions of triangulated categories},
  author={Rouquier, Rapha{\"e}l},
  journal={Journal of K-theory},
  volume={1},
  number={2},
  pages={193--256},
  year={2008},
  publisher={Cambridge University Press}
}

@article{ito2023gluing,
  title={Gluing of Fourier-Mukai partners in a triangular spectrum and birational geometry},
  author={Ito, Daigo},
  journal={arXiv preprint {arXiv:2309.08147}},
  year={2023},
 note = {\href{https://arxiv.org/abs/2309.08147}{arXiv}}
}

@article{hacking2021symplectomorphisms,
  title={Symplectomorphisms of mirrors to log Calabi-Yau surfaces},
  author={Hacking, Paul and Keating, Ailsa},
  journal={arXiv preprint arXiv:2112.06797},
  year={2021}
}

@article{hacking2023homological,
  title={Homological mirror symmetry for log Calabi--Yau surfaces},
  author={Hacking, Paul and Keating, Ailsa},
  journal={Geometry \& Topology},
  volume={26},
  number={8},
  pages={3747--3833},
  year={2023},
  publisher={Mathematical Sciences Publishers}
}

@book{seidel2008fukaya,
  title={Fukaya categories and Picard-Lefschetz theory},
  author={Seidel, Paul},
  volume={10},
  year={2008},
  publisher={European Mathematical Society}
}

@article{balmer2010spectra,
  title={Spectra, spectra, spectra--tensor triangular spectra versus Zariski spectra of endomorphism rings},
  author={Balmer, Paul},
  journal={Algebraic \& Geometric Topology},
  volume={10},
  number={3},
  pages={1521--1563},
  year={2010},
  publisher={Mathematical Sciences Publishers}
}

@article{atiyah1957vector,
  title={Vector bundles over an elliptic curve},
  author={Atiyah, Michael Francis},
  journal={Proceedings of the London Mathematical Society},
  volume={3},
  number={1},
  pages={414-Schematic Harder-Narasimhan stratification452},
  year={1957},
  publisher={Citeseer}
}

@article{de2012reconstructing,
  title={Reconstructing schemes from the derived category},
  author={de Salas, Carlos Sancho and de Salas, Fernando Sancho},
  journal={Proceedings of the Edinburgh Mathematical Society},
  volume={55},
  number={3},
  pages={781--796},
  year={2012},
  publisher={Cambridge University Press}
}

@article{artin1994noncommutative,
  title={Noncommutative projective schemes},
  author={Artin, Michael and Zhang, James J},
  journal={Advances in mathematics},
  volume={109},
  number={2},
  pages={228--287},
  year={1994},
  publisher={Elsevier}
}

@article{ito2024new,
  title={A new proof of the Bondal--Orlov reconstruction using Matsui spectra},
  author={Ito, Daigo and Matsui, Hiroki},
  journal={Bulletin of the London Mathematical Society},
  year={2025},
volume = {57},
pages = {2058--2076},
  publisher={Wiley Online Library},
note = {\href{https://arxiv.org/abs/2405.16776}{arXiv version}}
}

@article{hirano2024FMlocusK3,
  title={Fourier-Mukai loci of K3 surfaces of Picard number one},
  author={Hirano, Yuki and Ouchi, Genki},
  journal={arXiv preprint arXiv:2405.01169},
  year={2024}
}

@article{ballard2009equivalences,
  title={Equivalences of derived categories of sheaves on quasi-projective schemes},
  author={Ballard, Matthew Robert},
  journal={arXiv preprint arXiv:0905.3148},
  year={2009}
}

@article{gross2015mirror,
  title={Mirror symmetry for log Calabi-Yau surfaces I},
  author={Gross, Mark and Hacking, Paul and Keel, Sean},
  journal={Publications Math{\'e}matiques de l'IHES},
  volume={122},
  pages={65--168},
  year={2015}
}

@phdthesis{subotic2010monoidal,
  title={A monoidal structure for the Fukaya category},
  author={Subotic, Aleksandar},
  year={2010},
  school={Harvard University}
}

@article{abouzaid2009morse,
  title={Morse homology, tropical geometry, and homological mirror symmetry for toric varieties},
  author={Abouzaid, Mohammed},
  journal={Selecta Mathematica},
  volume={15},
  pages={189--270},
  year={2009},
  publisher={Springer}
}

@inproceedings{symington2003four,
  title={Four dimensions from two in symplectic topology},
  author={Symington, Margaret},
  booktitle={Proceedings of Symposia in Pure Mathematics},
  pages={153--208},
  year={2003},
  organization={American Mathematical Society}
}

@article{hanlon2019monodromy,
  title={Monodromy of monomially admissible Fukaya-Seidel categories mirror to toric varieties},
  author={Hanlon, Andrew},
  journal={Advances in Mathematics},
  volume={350},
  pages={662--746},
  year={2019},
  publisher={Elsevier}
}

@article{polishchuk1998categorical,
  title={Categorical Mirror Symmetry: The Elliptic Curve},
  author={Polishchuk, Alexander and Zaslow, Eric},
  journal={Adv. Theor. Math. Phys},
  volume={2},
  pages={443--470},
  year={1998}
}

@article{abouzaid2024focus,
  title={The focus-focus addition graph is immersed},
  author={Abouzaid, Mohammed and Bottman, Nathaniel and Niu, Yunpeng},
  journal={arXiv preprint arXiv:2409.10377},
  year={2024}
}

@inproceedings{neeman2022finite,
  title={Finite approximations as a tool for studying triangulated categories},
  author={Neeman, Amnon},
  booktitle={Proc. Int. Cong. Math},
  volume={3},
  pages={1636--1658},
  year={2022}
}

@article{bondal2002generators,
  title={Generators and representability of functors in commutative and noncommutative geometry},
  author={Bondal, Alexei and Bergh, Michel Van den},
  journal={arXiv preprint math/0204218},
  year={2002}
}

@article{Balmer_2002,
	abstract = {To any triangulated category with tensor product {\$}(K, {$\backslash$}otimes){\$}, we associate a topological space {\$}{\{}{$\backslash$}rm Spc{\}}(K, {$\backslash$}otimes){\$}, by means of thick subcategories of K, {\`a}la Hopkins-Neeman-Thomason. Moreover, to each open subset U of this space {\$}Spc(K, {$\backslash$}otimes){\$}, we associate a triangulated category {\$}{\{}{$\backslash$}cal K{\}}(U){\$}{$]$}, producing what could be thought of as a presheaf of triangulated categories. Applying this to the derived category {\$}(K, {$\backslash$}otimes):=({\{}{$\backslash$}rm D{\}}\^{}{\{}{$\backslash$}rm perf{\}} (X), {$\backslash$}otimes\^{}L){\$}of perfect complexes on a noetherian scheme X, the topological space {\$}{\{}{$\backslash$}rm Spc{\}}(K, {$\backslash$}otimes){\$}turns out to be the underlying topological space of X; moreover, for each open {\$}U {$\backslash$}subset X{\$}, the category {\$}{\{}{$\backslash$}cal K{\}}(U){\$}is naturally equivalent to {\$}{\{}{$\backslash$}rm D{\}}\^{}{\{}{$\backslash$}rm perf{\}} (U){\$}. As an application, we give a method to reconstruct any reduced noetherian scheme X from its derived category of perfect complexes {\$}{\{}{$\backslash$}rm D{\}}\^{}{\{}{$\backslash$}rm perf{\}} (X){\$}, considering the latter as a tensor triangulated category with {\$}{$\backslash$}otimes\^{}L{\$}.},
	author = {Balmer, Paul},
	date = {2002/11/01},
	date-added = {2023-07-22 12:15:05 +0900},
	date-modified = {2023-07-22 12:15:05 +0900},
	doi = {10.1007/s00208-002-0353-1},
	id = {Balmer2002},
	isbn = {1432-1807},
	journal = {Mathematische Annalen},
	number = {3},
	pages = {557--580},
	title = {Presheaves of triangulated categories and reconstruction of schemes},
	url = {https://doi.org/10.1007/s00208-002-0353-1},
	volume = {324},
	year = {2002},
	bdsk-url-1 = {https://doi.org/10.1007/s00208-002-0353-1}}

@article{Balmer_2005,
author = {Balmer, Paul},
year = {2005},
pages = {149--168},
title = {The spectrum of prime ideals in tensor triangulated categories},
volume = {588},
journal = {Journal fur die Reine und Angewandte Mathematik},
doi = {10.1515/crll.2005.2005.588.149}
}

@article{birkar2010existence,
  title={Existence of minimal models for varieties of log general type},
  author={Birkar, Caucher and Cascini, Paolo and Hacon, Christopher and McKernan, James},
  journal={Journal of the American Mathematical Society},
  volume={23},
  number={2},
  pages={405--468},
  year={2010}
}

@book{lazarsfeld2017positivity,
  title={Positivity in algebraic geometry I: Classical setting: line bundles and linear series},
  author={Lazarsfeld, Robert K},
  volume={48},
  year={2017},
  publisher={Springer}
}

@book{sga6,
author = {Berthelot, Pierre and Grothendieck, A. (Alexandre) and Illusie, Luc.},
address = {Berlin, Heidelberg},
edition = {1st ed. 1971.},
isbn = {3-540-36936-8},
keywords = {Algebraic geometry},
language = {fre},
publisher = {Springer Berlin Heidelberg},
series = {Lecture Notes in Mathematics, 225},
title = {Théorie des Intersections et Théorème de Riemann-Roch : Séminaire de Géométrie Algébrique du Bois Marie 1966 /67 (SGA 6).},
year = {1971},
}

@article{ballard2011derived,
  title={Derived categories of sheaves on singular schemes with an application to reconstruction},
  author={Ballard, Matthew Robert},
  journal={Advances in Mathematics},
  volume={227},
  number={2},
  pages={895--919},
  year={2011},
  publisher={Elsevier}
}

@article{Orlov_dimension,
title = {Remarks on generators and dimensions of triangulated categories},
journal = {Moscow Math. J.},
volume = {vol.9},
number = {1},
pages = {153--159},
year = {2009},
author = {Dmitri Orlov}
}

@MISC {259385,
    TITLE = {Generating the derived category with line bundles},
    AUTHOR = {Libli},
    HOWPUBLISHED = {MathOverflow},
    NOTE = {URL:https://mathoverflow.net/q/259385 (version: 2017-01-14)},
    year = {2017},
    EPRINT = {https://mathoverflow.net/q/259385},
    URL = {https://mathoverflow.net/q/259385}
}

@article{Matsui_2021,
author = {Matsui, Hiroki},
year = {2021},
pages = {433--457},
title = {Prime thick subcategories and spectra of derived and singularity categories of noetherian schemes},
volume = {313},
number = {2},
journal = {Pacific Journal of Mathematics},
doi = {10.2140/pjm.2021.313.433}
}

@article{matsui2023triangular,
  title={Triangular spectra and their applications to derived categories of noetherian schemes},
  author={Matsui, Hiroki},
  journal={arXiv preprint arXiv:2301.03168},
  year={2023}
}

@phdthesis{castro2023spaces,
  title={Spaces of tensor products on the derived category of a variety},
  author={Castro, Angel Israel Toledo},
  year={2023},
  school={Universit{\'e} Cote d'Azur},
  note={available at \url{https://hal.science/tel-04018741/document}}
}

@article{SeiTho01,
author = {Paul Seidel and Richard Thomas},
title = {{Braid group actions on derived categories of coherent sheaves}},
volume = {108},
journal = {Duke Mathematical Journal},
number = {1},
publisher = {Duke University Press},
pages = {37--108},
year = {2001},
doi = {10.1215/S0012-7094-01-10812-0},
URL = {https://doi.org/10.1215/S0012-7094-01-10812-0}
}

@book{HuyBook,
    author = {Daniel Huybrechts},
    title = "{Fourier-Mukai Transforms in Algebraic Geometry}",
    publisher = {Oxford University Press},
    year = {2006},
    month = {04},
    abstract = "{This book provides a systematic exposition of the theory of Fourier-Mukai transforms from an algebro-geometric point of view. Assuming a basic knowledge of algebraic geometry, the key aspect of this book is the derived category of coherent sheaves on a smooth projective variety. The derived category is a subtle invariant of the isomorphism type of a variety, and its group of autoequivalences often shows a rich structure. As it turns out — and this feature is pursued throughout the book — the behaviour of the derived category is determined by the geometric properties of the canonical bundle of the variety. Including notions from other areas, e.g., singular cohomology, Hodge theory, abelian varieties, K3 surfaces; full proofs and exercises are provided. The final chapter summarizes recent research directions, such as connections to orbifolds and the representation theory of finite groups via the McKay correspondence, stability conditions on triangulated categories, and the notion of the derived category of sheaves twisted by a gerbe.}",
    isbn = {9780199296866},
    doi = {10.1093/acprof:oso/9780199296866.001.0001},
    url = {https://doi.org/10.1093/acprof:oso/9780199296866.001.0001},
}

@book{huybrechts_2016, 
place={Cambridge},
series={Cambridge Studies in Advanced Mathematics}, 
title={Lectures on K3 Surfaces}, 
DOI={10.1017/CBO9781316594193}, 
publisher={Cambridge University Press}, 
author={Huybrechts, Daniel}, 
year={2016}, 
collection={Cambridge Studies in Advanced Mathematics}}

@article{HO22,
author = {Hirano, Yuki and Ouchi, Genki},
year = {2022},
pages = {69--88},
title = {Prime thick subcategories on elliptic curves},
volume = {318},
number = {1},
journal = {Pacific Journal of Mathematics},
doi = {10.2140/pjm.2022.318.69}
}

@article{SirLiu13,
title = "Recovering Quivers from Derived Quiver Representations",
abstract = "We compute Balmer's prime spectrum for the derived category of quiver representations for a finite ordered quiver with the vertex-wise tensor product and show that it does not recover the quiver. We then associate an algebra to every k-linear triangulated tensor category and show that the path algebra can be recovered in this way.",
author = "Susan Sierra and Yu-Han Liu",
year = "2013",
doi = "10.1080/00927872.2012.668993",
language = "English",
volume = "41",
pages = "3013--3031",
journal = "Communications in Algebra",
issn = "0092-7872",
publisher = "Taylor and Francis Ltd.",
number = "8",
}

@article{FAVERO20121955,
title = {Reconstruction and finiteness results for Fourier–Mukai partners},
journal = {Advances in Mathematics},
volume = {230},
number = {4},
pages = {1955-1971},
year = {2012},
issn = {0001-8708},
doi = {https://doi.org/10.1016/j.aim.2012.03.025},
url = {https://www.sciencedirect.com/science/article/pii/S0001870812001211},
author = {David Favero},
keywords = {Fourier–Mukai transform, Derived category, Coherent sheaves},
abstract = {We develop some methods for studying the Fourier–Mukai partners of an algebraic variety. As applications we prove that abelian varieties have finitely many Fourier–Mukai partners and that they are uniquely determined by their derived category of coherent D-modules. We also generalize a famous theorem due to Bondal and Orlov.}
}

@book{GW10,
	author = "Ulrich Görtz and Torsten Wedhorn",
	title = "Algebraic Geometry Part I: Schemes. With Examples and Exercises",
	series = "Advanced Lectures in Mathematics",
	volume = "",
	publisher = "Vieweg+Teubner Verlag",
	year = "2010",
	ISBN = "978-3-8348-9722-0",
}

@book{Har77,
	author="Hartshorne, Robin",
	title="Algebraic {Geometry}",
	series="Graduate Texts in Mathematics",
	volume="52",
	publisher="Springer",
	address = "New York",
	year="1977",
}

@INCOLLECTION{thomason2007higher,
    AUTHOR = "Thomason, Robert Wayne and Trobaugh, Thomas",
    TITLE = "Higher algebraic {$K$}-theory of schemes and of derived categories",
    BOOKTITLE = "The {G}rothendieck {F}estschrift, {V}ol.\ {III}",
    SERIES = "Progr. Math.",
    VOLUME = "88",
    PAGES = "247--435",
    PUBLISHER = {Birkh\"auser Boston},
    ADDRESS = "Boston, MA",
    YEAR = "1990"
}

@misc{stacks-project,
  author       = {The {Stacks project authors}},
  title        = {The Stacks project},
  howpublished = {\url{https://stacks.math.columbia.edu}},
  year         = {2021},
}

@incollection {schutt2019elliptic,
  title={Elliptic surfaces},
  author={Sch{\"u}tt, Matthias and Shioda, Tetsuji},
  booktitle={Mordell--Weil Lattices},
  pages={79--114},
  year={2019},
  publisher={Springer}
}
\end{document}